\newcommand{\defn}[1]{\emph{\textbf{#1}}}
\newcommand{\N}{\mathbb{N}}
\renewcommand{\S}{\mathfrak{S}}
\newcommand{\X}{X}
\newcommand{\ssep}{:}
\newcommand{\A}[2]{\mathcal{A}(#1, #2)}
\newcommand{\Astar}[2]{\mathcal{A}^*(#1, #2)}
\newcommand{\B}[2]{\mathcal{B}(#1, #2)}
\newcommand{\Bstar}[2]{\mathcal{B}^*(#1, #2)}
\newcommand{\C}[2]{\mathcal{C}(#1, #2)}
\newcommand{\Cstar}[2]{\mathcal{C}^*(#1, #2)}
\newcommand{\CstarI}[3]{\mathcal{C}_{#1}^*(#2, #3)}
\renewcommand{\P}{\mathcal{P}}
\newcommand{\T}{\mathcal{T}}
\newcommand{\delete}{\backslash}
\newcommand{\contract}{/}
\newcommand{\reviso}[2]{\Rev_{#1,#2}}
\newcommand{\coreviso}[2]{\Corev_{#1,#2}}
\DeclareMathOperator{\Inv}{Inv}
\DeclareMathOperator{\Corev}{Corev}
\DeclareMathOperator{\Rev}{Rev}
\DeclareMathOperator{\width}{width}
\definecolor{mygray}{HTML}{ACACAC}
\definecolor{myred}{HTML}{E6194B}
\definecolor{mygreen}{HTML}{3CB44B}
\definecolor{myblue}{HTML}{4363D8}
\definecolor{mypink}{HTML}{F032E6}
\definecolor{myorange}{HTML}{F58231}
\definecolor{mypurple}{HTML}{7F00FF}
\definecolor{mybrown}{HTML}{954535}
\Crefname{algocf}{Algorithm}{Algorithms}
\numberwithin{equation}{section}
\title{On Enumerating Higher Bruhat Orders\\Through Deletion and Contraction}
\author{Herman Chau}
\date{\today}
\newcounter{x}
\newcounter{y}
\newcounter{z}
\newcommand\xaxis{210}
\newcommand\yaxis{-30}
\newcommand\zaxis{90}
\newcommand\topside[3]{
  \fill[fill=yellow, draw=black,shift={(\xaxis:#1)},shift={(\yaxis:#2)},
  shift={(\zaxis:#3)}] (0,0) -- (30:1) -- (0,1) --(150:1)--(0,0);
}
\newcommand\leftside[3]{
  \fill[fill=red, draw=black,shift={(\xaxis:#1)},shift={(\yaxis:#2)},
  shift={(\zaxis:#3)}] (0,0) -- (0,-1) -- (210:1) --(150:1)--(0,0);
}
\newcommand\rightside[3]{
  \fill[fill=blue, draw=black,shift={(\xaxis:#1)},shift={(\yaxis:#2)},
  shift={(\zaxis:#3)}] (0,0) -- (30:1) -- (-30:1) --(0,-1)--(0,0);
}
\newcommand\cube[3]{
  \topside{#1}{#2}{#3} \leftside{#1}{#2}{#3} \rightside{#1}{#2}{#3}
}
\newcommand\planepartition[1]{
 \setcounter{x}{-1}
  \foreach \a in {#1} {
    \addtocounter{x}{1}
    \setcounter{y}{-1}
    \foreach \b in \a {
      \addtocounter{y}{1}
      \setcounter{z}{-1}
      \foreach \c in {1,...,\b} {
        \addtocounter{z}{1}
        \cube{\value{x}}{\value{y}}{\value{z}}
      }
    }
  }
}
\begin{document}

\maketitle

\begin{abstract}
  The higher Bruhat orders $\B{n}{k}$ were introduced by Manin--Schechtman to study discriminantal hyperplane arrangements and subsequently studied by Ziegler, who connected $\B{n}{k}$ to oriented matroids. In this paper, we consider the enumeration of $\B{n}{k}$ and improve upon Balko's asymptotic lower and upper bounds on $|\B{n}{k}|$ by a factor exponential in $k$. A proof of Ziegler's formula for $|\B{n}{n-3}|$ is given and a bijection between a certain subset of $\B{n}{n-4}$ and totally symmetric plane partitions is proved. Central to our proofs are deletion and contraction operations for the higher Bruhat orders, defined in analogy with matroids. Dual higher Bruhat orders are also introduced, and we construct isomorphisms relating the higher Bruhat orders and their duals. Additionally, weaving functions are introduced to generalize Felsner's encoding of elements in $\B{n}{2}$ to all higher Bruhat orders $\B{n}{k}$.
\end{abstract}

\section{Introduction}
The higher Bruhat orders $\B{n}{k}$ are a family of partially ordered sets introduced by Manin--Schechtman to study the combinatorics and topology of discriminantal hyperplane arrangements. Specifically, the elements in $\B{n}{k}$ are related to the order in which paths connecting antipodal chambers in a discriminantal arrangement pass through the hyperplanes in the arrangement \cite[Section 2.3]{maninschechtman89}. The intersection lattice of discriminantal arrangements was further studied by Falk \cite{falk-1994}, Bayer--Brandt \cite{bayer-brandt-1997}, and Athanasiadis \cite{athanasiadis-1999}. Furthermore, Laplante-Anfossi--Williams showed a correspondence between $\B{n}{k}$ and the cup-$i$ coproducts defining Steenrod squares in cohomology \cite{laplante-anfossi-williams-2023}.

Despite the name, the higher Bruhat orders generalize the weak order on the symmetric group $\S_n$ and \emph{not} the Bruhat order. The poset $\B{n}{1}$ is precisely the weak order on $\S_n$. The poset $\B{n}{2}$ is related to primitive sorting networks \cite{knuth92}, commutation classes of reduced expressions of the longest permutation \cite{elnitsky97, tenner2006}, weakly separated set systems \cite{danilovkarzanovkoshevoy2010}, and rhombic tilings of a regular $2n$-gon \cite{escobar-pechenik-tenner-yong-2018}. In \cite{ziegler93}, Ziegler introduced a characterization of the higher Bruhat orders in terms of collections of $k$-subsets that satisfy a certain consistency property, partially ordered by single step inclusion. See \cref{section:preliminaries} for the precise definition of consistency. The poset $\C{n}{k+1}$ on consistent subsets is isomorphic to $\B{n}{k}$, and the proof of the isomorphism of the two relies on the theory of single element extensions of alternating matroids \cite{ziegler93}. 

The goal of this paper is to obtain new enumerative and asymptotic results on the higher Bruhat orders. Along the way, the dual higher Bruhat orders $\Bstar{n}{k}$ and $\Cstar{n}{k}$ are introduced. Notably, $\Bstar{n}{k}$ and $\Cstar{n}{k}$ differ from the usual notion of poset duality, which reverses the partial order on elements. Deletion and contraction on elements of $\B{n}{k}$ and $\C{n}{k}$ are defined in analogy with the corresponding operations in matroids. As one would expect, deletion and contraction are dual to each other in the higher Bruhat orders. The main structural result is the commutative diagram of isomorphisms in \eqref{equation:commutative-diagram} relating $\B{n}{k}$, $\C{n}{k+1}$ and their duals. The commutative diagram is similar to the one in Falk's note on the duality of deletion and contraction in \cite[Remark 2.6]{falk-1994}, but applied to the higher Bruhat orders instead of the generic stratum in the Grassmannian.
See \cref{section:dual-orders} for the precise construction of the isomorphisms.

\begin{theorem}
    \label{theorem:fundamental-duality}
    For all integers $1 \le k < n$, there exist poset isomorphisms $\reviso{n}{k}$, $\coreviso{n}{k}$, $\beta_{n,k}$, and $\gamma_{n,k}$ such that the following diagram commutes:
    \begin{equation}
        \label{equation:commutative-diagram}
        \begin{tikzcd}[row sep=normal, column sep=huge]
            \B{n}{k} \arrow[r, "\reviso{n}{k}"] \arrow[d, "\beta_{n,k}"] & \C{n}{k+1} \arrow[d, "\gamma_{n,k+1}"]\\
            \Bstar{n}{n-k} \arrow[r, "\coreviso{n}{n-k}"] & \Cstar{n}{n-k-1}.
        \end{tikzcd}
    \end{equation}
\end{theorem}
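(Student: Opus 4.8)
The plan is to take the four maps $\reviso{n}{k}$, $\coreviso{n}{n-k}$, $\beta_{n,k}$, and $\gamma_{n,k+1}$ with the explicit definitions given in \cref{section:dual-orders}, verify in turn that each is a poset isomorphism, and then check that the square commutes on elements; since all four maps will have been shown to respect the partial orders on the four corners, commutativity on elements upgrades to commutativity of posets. Because $\reviso{n}{k}$ is Ziegler's bijection $\B{n}{k}\cong\C{n}{k+1}$ --- sending an admissible order on $\binom{[n]}{k}$ to its set of reversed packets --- I would either invoke \cite{ziegler93} directly or supply the self-contained deletion--contraction argument; running the same argument inside the dual orders then shows $\coreviso{n}{n-k}\colon\Bstar{n}{n-k}\to\Cstar{n}{n-k-1}$ is an isomorphism. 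Alternatively, and with less bookkeeping, once commutativity is established and $\reviso{n}{k},\beta_{n,k},\gamma_{n,k+1}$ are known to be isomorphisms, $\coreviso{n}{n-k}=\gamma_{n,k+1}\circ\reviso{n}{k}\circ\beta_{n,k}^{-1}$ is forced to be one.

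The heart of the matter is the two vertical maps, which are built from the complementation bijection $S\mapsto[n]\setminus S$ from $\binom{[n]}{j}$ to $\binom{[n]}{n-j}$ (possibly composed with the reversal $i\mapsto n+1-i$ of $[n]$, according to the conventions fixed for the dual orders). The key combinatorial fact I would isolate as a lemma is that complementation carries the packet of a $(k+1)$-set $P$ --- the lexicographically ordered family of its $k$-subsets --- to the \emph{co-packet} of the $(n-k-1)$-set $[n]\setminus P$, namely the family of $(n-k)$-sets containing $[n]\setminus P$, with the two linear orders matching up to a global reversal. The underlying point is that the $(k+1)$-subsets of a fixed $(k+2)$-set $Q$ correspond under complementation exactly to the $(n-k-1)$-supersets of the fixed $(n-k-2)$-set $[n]\setminus Q$, so ``packet of a superset'' becomes ``co-packet of a subset''; this is precisely why the dual orders are defined with co-packets, and are therefore not the order-theoretic duals of $\B{n}{k}$ and $\C{n}{k+1}$. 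Granting this lemma, consistency transports to dual-consistency, admissible orders to dual-admissible orders, and single-step-inclusion covers to single-step-inclusion covers (this last point is where a reversal may be needed to fix the inclusion direction), so $\beta_{n,k}$ and $\gamma_{n,k+1}$ are poset isomorphisms with inverses again given by complementation.

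Commutativity is then a diagram chase on a single element $\rho\in\B{n}{k}$. Going right then down, $\reviso{n}{k}(\rho)$ is the set $V$ of $(k+1)$-sets whose packet is reversed in $\rho$, and $\gamma_{n,k+1}(V)=\{[n]\setminus K : K\in V\}$. Going down then right, $\beta_{n,k}(\rho)$ is the dual-admissible order obtained from $\rho$ by complementing (and reversing), and applying $\coreviso{n}{n-k}$ to it yields the set of $(n-k-1)$-sets whose co-packet is reversed there. The packet--co-packet lemma says that a packet of $K$ is reversed in $\rho$ exactly when the co-packet of $[n]\setminus K$ is reversed in $\beta_{n,k}(\rho)$, so the two outputs coincide.

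I expect the main obstacle to be bookkeeping rather than ideas: pinning down the dual notions --- dual packet/co-packet, dual consistency, and the dual single-step order --- together with the correct power of the reversal involution so that complementation is an \emph{exact} isomorphism rather than merely a structural analogy, i.e.\ so that every lexicographic comparison and every inclusion direction lands on the correct side. Once the definitions in \cref{section:dual-orders} are calibrated this way, the isomorphism claims and the commutativity become essentially formal consequences of the single packet--co-packet lemma together with Ziegler's theorem.
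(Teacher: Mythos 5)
Your proposal matches the paper's proof essentially exactly: it relies on the same packet--copacket complementation bijection (the paper's equation \eqref{equation:packet-copacket-bijection}), verifies $\beta_{n,k}$ and $\gamma_{n,k}$ preserve covers in the same way, and then uses the same shortcut you flag as the ``alternative'' --- proving $\coreviso{n}{n-k}=\gamma_{n,k+1}\circ\reviso{n}{k}\circ\beta_{n,k}^{-1}$ directly, which simultaneously gives commutativity and forces $\coreviso{n}{n-k}$ to be an isomorphism. The only wrinkle is your hedge about whether a relabeling $i\mapsto n+1-i$ of $[n]$ is also needed; in the paper's conventions it is not --- $\beta_{n,k}$ complements each block and reverses the \emph{sequence}, and that alone sends the lex order on a packet to the antilex order on the corresponding copacket.
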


Little is known about the exact enumeration of the higher Bruhat orders. Since $\B{n}{1}$ is the weak order on $\S_n$, it is clear that $|\B{n}{1}| = n!$, but already there is no known closed formula for $|\B{n}{2}|$. The sequence $|\B{n}{2}|$ begins 1, 2, 8, 62, 908, $\ldots$ and can be found in the OEIS sequence \href{https://oeis.org/A006245}{A006245}. The prime factors of $|\B{n}{2}|$ grow quickly relative to $n$, so a product formula for $|\B{n}{2}|$ is unlikely. On the other hand, for $|\B{n}{n-k}|$ the following exact formulas appear in \cite{ziegler93} for $k = 0, 1, 2, 3$: $|\B{n}{n}| = 1$, $|\B{n}{n-1}| = 2$, $|\B{n}{n-2}| = 2n$, and $|\B{n}{n-3}| = 2^n + n2^{n-2}-2n$.

Given the difficulty in computing a closed formula, one might ask for asymptotics on $|\B{n}{k}|$ instead. When $k = 2$, the best upper bound to date is $\log_2 |\B{n}{2}| \le 0.6571n^2$ due to Felsner and Valtr \cite{felsnervaltr2011}, and the best lower bound to date is $0.2721n^2 \le \log_2 |\B{n}{2}|$ due to K\"uhnast, Dallant, Felsner, and Scheucher \cite{kuhnast-dallant-felsner-scheucher}. Furthermore, Balko \cite[Theorem 3]{balko2019} recently showed that for $k \ge 2$ and sufficiently large $n \gg k$, we have
\begin{equation}
\frac{n^k}{(k+1)^{4(k+1)}} \le \log_2 |\B{n}{k}| \le \frac{2^{k-1}n^k}{k!}.
\end{equation}

The following theorem improves upon Balko's asymptotic bounds. A more precise statement is given in \cref{theorem:higher-bruhat-order-asymptotics}. The proof uses a new encoding of elements in $\B{n}{k}$ termed \emph{weaving functions}, which generalize an encoding of $\B{n}{2}$ due to Felsner's \cite{felsner1997}. The same methods are also used to prove \cref{theorem:dual-higher-bruhat-order-asymptotics}, which gives similar asymptotic bounds on $|\B{n}{n-k}| = |\Bstar{n}{k}|$.
\begin{theorem}
    \label{theorem:higher-bruhat-order-asymptotics-intro}
    For every integer $k \ge 2$, there exists a constant $c_k$ such that for sufficiently large $n \gg k$, we have
    \begin{equation}
    \frac{c_kn^k}{k!(k+1)!} \le \log_2 |\B{n}{k}| \le \frac{n^k}{k!\log{2}}.
    \end{equation}
    Furthermore, the constant $c_k$ satisfies $\displaystyle \lim_{k \to \infty} \frac{c_k}{k!/\sqrt{24 \pi k}} = 1$.
\end{theorem}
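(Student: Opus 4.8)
The plan is to derive both inequalities from the encoding of elements of $\B{n}{k}$ by \emph{weaving functions}; the clean statement above is a consequence of the sharper \cref{theorem:higher-bruhat-order-asymptotics}, so it suffices to establish the two bounds. Recall that a weaving function of $u \in \B{n}{k}$ attaches, to each member of an index family of size $\sim\binom{n}{k-1}$, a piece of ``direction data'' recording how that feature threads through $u$: for $k=2$ this is exactly the word of left/right steps of the $i$-th wire in the associated wiring diagram -- a $\pm1$-word of length $n-1$ whose number of right steps is forced to be $n-i$ because that wire must travel from position $i$ to position $n+1-i$ -- and in general it is the analogous lattice-path datum attached to the contraction $u/i \in \B{n-1}{k-1}$, confined to a region whose size is controlled by the position of $i$. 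The two facts needed are: (W1) the assignment $u \mapsto (\text{weaving function of }u)$ is injective; and (W2) each direction datum ranges over a set whose size is a product of binomial coefficients, with total base-$2$ logarithm at most $\tfrac{(i-1)^{k-1}}{(k-1)!\,\log 2}\,(1+o(1))$. Both are proved by induction on $k$, the inductive step using the deletion and contraction operations (defined in analogy with matroids: contraction lowers both $k$ and $n$, deletion lowers $n$), with Felsner's wiring-diagram analysis as the base case $k=2$, where (W1) is the classical fact that a wiring diagram is recovered from its sequence of level permutations.

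Granting (W1)--(W2), the \textbf{upper bound} is a counting estimate: $|\B{n}{k}| \le \prod_{i=1}^{n}(\text{number of direction data at }i)$, a product of binomial coefficients. Using the entropy bound $\log_2\binom{m}{j} \le m\,H(j/m)$, where $H(x) = -x\log_2 x - (1-x)\log_2(1-x)$ is the binary entropy, concavity of $H$ together with $H(0)=H(1)=0$ turns the sum over $i$ into (an upper bound for) an integral; since $\int_0^1 H(x)\,dx = \tfrac1{2\log 2}$, the case $k=2$ already gives $\log_2|\B{n}{2}| \le \sum_{i=1}^{n}\log_2\binom{n-1}{i-1} \le \tfrac{(n-1)^2}{2\log 2} < \tfrac{n^2}{2!\,\log 2}$. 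For $k \ge 3$ one runs the same computation with the contribution of $i$ controlled, via the inductive hypothesis $\log_2|\B{m}{k-1}| \le \tfrac{m^{k-1}}{(k-1)!\,\log 2}$ applied to the confined rank-$(k-1)$ sub-problem of size $\sim i$; the elementary estimate $\sum_{m\le n}\tfrac{m^{k-1}}{(k-1)!} \le \tfrac{n^{k}}{k!}$ then propagates the constant, carrying $\tfrac1{2\log 2}$ to $\tfrac1{3!\,\log 2}$ to $\cdots$ to $\tfrac1{k!\,\log 2}$, so that for $n$ large the error terms are absorbed and $\log_2|\B{n}{k}| \le \tfrac{n^k}{k!\,\log 2}$.

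For the \textbf{lower bound} the plan is to exhibit a large family of \emph{realizable} weaving functions by a Felsner-type construction: process the elements $1,2,\dots,n$ in order, and at the step that introduces $m$ count the consistent ways to weave $m$ into the configuration built so far. This count is itself bounded below by a count of confined weaving functions of a rank-$(k-1)$ higher Bruhat order, hence by a product of binomial coefficients, and is at least $2^{f_k(m)}$ for an explicit $f_k(m) = \Theta_k(m^{k-1})$; multiplying over $m$ gives $\log_2|\B{n}{k}| \ge \sum_{m\le n} f_k(m) \ge \tfrac{c_k\,n^k}{k!(k+1)!}$, where $c_k$ is the constant obtained by unwinding the recursion. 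To obtain its asymptotics one tracks $c_k$ explicitly: the recursion expresses it (roughly) as a product of ratios of factorials, and applying Stirling's formula -- the accumulated correction terms contributing the factor $\sqrt{24\pi k} = \sqrt{4!\,\pi k}$ -- yields $c_k \sim k!/\sqrt{24\pi k}$ (equivalently $c_k \sim (k/e)^k/(2\sqrt 3)$) as $k \to \infty$.

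The \textbf{main obstacle} is establishing (W1) and (W2) in the generality of all $\B{n}{k}$, because the two requirements pull against each other: recording more data makes the reconstruction (W1) transparent but inflates the codomain and ruins the entropy bound, whereas the minimal encoding needed for (W2) to give the sharp constant forces one to prove that $u$ is genuinely recoverable from its confined, position-weighted contractions -- for $k=2$ this is the geometry of wiring diagrams, but in general it rests on the consistency axioms and the deletion/contraction calculus. On the lower-bound side the delicate point is not the existence of an $\Omega_k(n^k)$ bound -- a cruder construction already yields that -- but squeezing out the precise constant $c_k$ with the stated Stirling asymptotics, which is the most intricate quantitative bookkeeping in the argument.
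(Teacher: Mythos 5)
Your upper bound is essentially the paper's argument. The base case $k=2$ uses the injectivity of the weaving-function encoding and counts $\prod_{i=1}^n\binom{n-1}{i-1}$ possible functions (the paper cites Lagarias--Mehta for the asymptotic $\frac{n^2}{2\log 2}+O(n\log n)$, whereas your entropy-integral computation $\sum_i\log_2\binom{n-1}{i-1}\le(n-1)^2\int_0^1 H(x)\,dx=\frac{(n-1)^2}{2\log 2}$ gives the same leading term); the inductive step in both cases iterates the deletion/contraction injection to bound $|\C{n}{k+1}|$ by $\prod_{m=k}^{n-1}|\C{m}{k}|$ and sums via Faulhaber. This part is fine.

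The lower bound is where you diverge from the paper, and where there is a real gap. The paper does \emph{not} use a Felsner-style incremental weaving construction. Instead, it observes (via \cref{lemma:upper-order-ideal-is-consistent}) that every upper order ideal of the consistent poset $\P_\varnothing$ is itself a consistent set, that the partial order on $\P_\varnothing$ is the Gale (componentwise dominance) order on $\binom{[n]}{k+1}$, and that the number of upper order ideals is at least $2^{\width(\P_\varnothing)}$. The width is then bounded below by the middle coefficient $[q^{\lfloor(k+1)/2\rfloor(n-k-1)}]\binom{n}{k+1}_q$ of a $q$-binomial coefficient, and a theorem of Stanley--Zanello on $q$-binomial coefficient asymptotics identifies this coefficient as $\frac{A(k,\lfloor(k-1)/2\rfloor)n^k}{k!(k+1)!}+O(n^{k-1})$. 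Thus $c_k$ is \emph{exactly} the maximal Eulerian number $A(k,\lfloor(k-1)/2\rfloor)$, and the asymptotic $c_k\sim k!/\sqrt{24\pi k}$ is the known local-central-limit asymptotic for the middle Eulerian number (Bender). Your sketch replaces this with ``count weaving extensions at each step $m$, get $2^{f_k(m)}$ choices, multiply,'' and then asserts that ``unwinding the recursion'' and Stirling give $c_k\sim k!/\sqrt{24\pi k}$. But you never produce a candidate for $f_k(m)$, you never show the extension choices are independent or even that a lower bound of the form $\prod_m 2^{f_k(m)}$ holds, and --- most importantly --- you have no mechanism that would force the leading constant to equal the Eulerian number $A(k,\lfloor(k-1)/2\rfloor)$. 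That identification is the whole content of the lower bound: without it, the claim $\lim_{k\to\infty}c_k/(k!/\sqrt{24\pi k})=1$ has no justification. Your honest acknowledgment that this is ``the most intricate quantitative bookkeeping'' is correct, but the bookkeeping you propose does not plausibly terminate in the Eulerian-number asymptotics; the paper gets there by an entirely different route (antichain width in a Gale order and $q$-binomial asymptotics) that you would need to discover.
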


Partitioning the elements of $\B{n}{k}$ by their deletion leads to the formula for $|\B{n}{n-3}|$ that appeared in \cite{ziegler93} without proof. By enumerating elements in $\B{n}{n-4}$ with a fixed deletion set, one also obtains a connection between the higher Bruhat orders and the celebrated totally symmetric plane partitions (TSPPs), which have been studied by Andrews, Paule, and Schneider \cite{andrews-paule-schneider-2005}, Mills, Robbins, and Rumsey Jr. \cite{mills-robbins-rumsey}, and Stembridge \cite{stembridge95}, among many others.
\begin{theorem}
    \label{theorem:TSPP-bijection}
    For every integer $n \ge 5$, the poset of TSPPs $\T_{n-3}$, partially ordered by inclusion, is isomorphic to the subposet of $\B{n}{n-4}$ obtained by restricting to elements whose deletion is the commutation class of the lexicographic order on $\binom{[n-1]}{k}$.
\end{theorem}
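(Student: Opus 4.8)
The plan is to push the statement through the isomorphism $\reviso{n}{n-4}\colon\B{n}{n-4}\to\C{n}{n-3}$ and then recognize the resulting subposet as a lattice of order ideals. Since $\reviso{}{}$ is built to intertwine deletion on the $\B$-orders with deletion on the $\C$-orders, and since $\reviso{n-1}{n-4}$ sends the lexicographic class to $\emptyset$ (the minimum of $\C{n-1}{n-3}$), an element $u\in\B{n}{n-4}$ has deletion equal to the lexicographic class exactly when $U:=\reviso{n}{n-4}(u)$ satisfies $U\delete n=\emptyset$, i.e.\ exactly when every member of $U$ contains $n$. So the subposet in the statement is isomorphic to $\mathcal U:=\{U\in\C{n}{n-3}\ssep n\in T\text{ for every }T\in U\}$ with the order induced from $\C{n}{n-3}$. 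Now send $U\in\mathcal U$ to $\mathcal V_U:=\{[n-1]\setminus(T\setminus\{n\})\ssep T\in U\}\subseteq\binom{[n-1]}{3}$ (this is the contraction $U\contract n$ followed by complementation inside $[n-1]$); it is an inclusion-preserving bijection from $\mathcal U$ onto some family of subsets of $\binom{[n-1]}{3}$. What remains is to identify that family with the order ideals of $\bigl(\binom{[n-1]}{3},\preceq\bigr)$, where $\{a<b<c\}\preceq\{a'<b'<c'\}$ means $a\le a'$, $b\le b'$ and $c\le c'$.

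To do this, unpack the consistency condition on $U$ one $(n-2)$-subset at a time: recall (see \cref{section:preliminaries}) that $U$ is consistent iff for every $R\in\binom{[n]}{n-2}$ the set $U\cap\binom{R}{n-3}$ is an initial or final segment of the lexicographic order on $\binom{R}{n-3}$. If $n\notin R$ this intersection is empty, so nothing is required. If $R=R'\cup\{n\}$ with $R'=\{r_1<\dots<r_{n-3}\}$, then $\binom{R}{n-3}=\{R'\}\cup\{S_i\ssep 1\le i\le n-3\}$ with $S_i:=R\setminus\{r_i\}$, and a direct check gives the lexicographic order $R'\prec S_{n-3}\prec S_{n-4}\prec\dots\prec S_1$. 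Since $R'\notin U$, the intersection $U\cap\binom{R}{n-3}$ can only be $\emptyset$ or $\{S_1,\dots,S_\ell\}$; writing $\{p<q\}:=[n-1]\setminus R'$ and noting $S_i\in U\iff\{p,q,r_i\}\in\mathcal V_U$, this says precisely that $\{x\in[n-1]\setminus\{p,q\}\ssep\{p,q,x\}\in\mathcal V_U\}$ is a down-set of the natural linear order on $[n-1]\setminus\{p,q\}$. As $R'$ runs over $\binom{[n-1]}{n-3}$ the pair $\{p,q\}$ runs over all of $\binom{[n-1]}{2}$, so $U$ is consistent iff $\mathcal V_U$ has this down-closed-third-coordinate property for every pair.

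The crux is that this collection of per-pair conditions is equivalent to $\mathcal V_U$ being an order ideal of $\bigl(\binom{[n-1]}{3},\preceq\bigr)$. One direction is clear. For the converse, take $\{a<b<c\}\in\mathcal V_U$; it suffices to produce the (at most three) triples covered by it under $\preceq$. Decreasing the largest coordinate uses the pair $\{a,b\}$: down-closedness there gives $\{a,b,c-1\}\in\mathcal V_U$ whenever $c-1>b$. Decreasing the \emph{middle} coordinate uses the pair $\{a,c\}$, for which $b$ is the varying third element: down-closedness gives $\{a,b-1,c\}\in\mathcal V_U$ whenever $b-1>a$. Decreasing the smallest coordinate uses the pair $\{b,c\}$: it gives $\{a-1,b,c\}\in\mathcal V_U$ whenever $a-1\ge1$. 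Hence $\mathcal V_U$ is down-closed, so it is an order ideal. It is precisely the use of all three pairs, rather than only the pair of two smallest elements, that forces the $\S_3$-type symmetry and is the reason totally symmetric plane partitions appear here; I expect this converse to be the main obstacle, with everything else routine.

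It then remains to assemble the pieces. The map $\{a<b<c\}\mapsto(a,\,b-1,\,c-2)$ is a poset isomorphism from $\bigl(\binom{[n-1]}{3},\preceq\bigr)$ onto $\{(i,j,k)\in\Z^3\ssep 1\le i\le j\le k\le n-3\}$ with the componentwise order, and the order ideals of the latter are, by the usual reduction of a totally symmetric plane partition to its trace on the fundamental domain $\{i\le j\le k\}$, exactly the elements of $\T_{n-3}$. All of these identifications are inclusion-preserving, so they carry single-step inclusions to covering relations and the order induced on $\mathcal U$ to inclusion of plane partitions (using that any inclusion of order ideals is refined by a chain of single steps that stays inside the lattice, hence inside $\mathcal U$, while conversely the order of $\C{n}{n-3}$ always refines inclusion); combined with the reduction of the first paragraph this gives the desired poset isomorphism. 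Along the way one should also record the routine facts that $\reviso{}{}$ commutes with deletion and sends the lexicographic class to $\emptyset$, and the explicit lexicographic order on $\binom{R}{n-3}$ used above.
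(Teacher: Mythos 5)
Your proof is correct and takes essentially the same route as the paper's: your map $U\mapsto\mathcal V_U$ is precisely $\gamma_{n,n-3}$, landing in the paper's $\CstarI{\varnothing}{n}{3}$, and both arguments then identify that family with TSPPs via the coordinate shift $\{a<b<c\}\leftrightarrow(a,\,b-1,\,c-2)$. Your per-pair unpacking of consistency and the verification that it is equivalent to the order-ideal condition on $\bigl(\binom{[n-1]}{3},\preceq\bigr)$ is a careful elaboration of a step the paper states with minimal justification (``Since $T$ is a plane partition\ldots is a prefix'').
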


This paper is structured as follows. In Section 2, the definitions and properties of the weak order on $\mathfrak{S}_n$, the partial orders $\B{n}{k}$ and $\C{n}{k}$, and totally symmetric plane partitions are reviewed. In Section 3, the dual partial orders $\Bstar{n}{k}$ and $\Cstar{n}{k}$ are introduced and \cref{theorem:fundamental-duality} is proved. In Section 4, the deletion and contraction operations are defined, and their fundamental properties are proved. In Section 5, weaving functions are introduced, and it is proved that distinct elements of $\B{n}{k}$ have distinct weaving functions. In Section 6, the asymptotic bounds in \cref{theorem:higher-bruhat-order-asymptotics} and \cref{theorem:dual-higher-bruhat-order-asymptotics} are proved. In Section 7, the known formula for $|\B{n}{n-3}|$ is proved and the isomorphism between a subposet of $\B{n}{n-4}$ and $\T_{n-3}$ is stated and proved. In Section 8, open problems for future study are proposed.

\section{Preliminaries}
\label{section:preliminaries}

\subsection{Weak Order on the Symmetric Group}
For a positive integer $n$, let $[n]$ denote the set $\{1, 2, \ldots, n\}$. By convention, $[0] = \varnothing$. Given an unordered set $X = \{x_1, x_2, \ldots, x_n\}$, let $(x_1, x_2, \ldots, x_n)$ denote $X$ as an ordered set. When $(x_1, x_2, \ldots, x_n)$ are integers in increasing order, the notation $[x_1, x_2, \ldots, x_n]$ will be used to emphasize that the elements are in increasing order. In order to avoid ambiguity between $[n]$ as a set of positive integers and $[n]$ as an ordered set of size one, the latter will be written as the singleton set $\{n\}$. For an integer $k$ such that $1 \le k \le n$, let $\binom{[n]}{k}$ denote the collection of ordered subsets of $k$ distinct elements in $[n]$, ordered in increasing order. For example, $\binom{[4]}{2} = \{[1,2], [1,3], [1,4], [2,3], [2,4], [3,4]\}$. By convention, $\binom{[n]}{0} = \{\varnothing\}$, and if $k$ is a positive integer such that $k > n$, then $\binom{[n]}{k} = \varnothing$.  A collection $\mathcal{S}$ of sets is said to be partially ordered by \defn{inclusion} if for any two sets $X,Y \in \mathcal{S}$, $X \le Y$ if and only if $X \subseteq Y$.

The symmetric group on $[n]$ is denoted $\mathfrak{S}_n$. The one-line notation of a permutation $\rho = (\rho_1, \rho_2, \ldots, \rho_n)$ in $\mathfrak{S}_n$ denotes the permutation that maps $i \mapsto \rho_i$ for $i \in [n]$. For example, $(2,3,1) \in \mathfrak{S}_3$ is the permutation sending $1 \mapsto 2$, $2 \mapsto 3$, and $3 \mapsto 1$. For $i \in [n-1]$, the adjacent transposition $\tau_i \in \mathfrak{S}_n$ is the permutation that swaps $i$ and $i+1$ while fixing the other elements in $[n]$. The inversion set of a permutation $\rho \in \mathfrak{S}_n$ is the subset of $\binom{[n]}{2}$ given by
\[
\Inv(\rho) = \left\{[i,j] \in \binom{[n]}{2} : \text{$\rho^{-1}(i) > \rho^{-1}(j)$}\right\}.
\]
The \defn{(right) weak order} on $\mathfrak{S}_n$ is the partial order obtained by taking the transitive closure of the cover relations where $\rho \lessdot \sigma$ if and only if $\sigma = \rho \tau_i$ for some $i \in [n-1]$ such that $\rho_i < \rho_{i+1}$. One can check that if $\rho \lessdot \sigma$ and $\sigma = \rho\tau_i$, then $\Inv(\sigma) = \Inv(\rho) \cup \{[\rho_i, \rho_{i+1}]\}$. It is well-known that the weak order on $\mathfrak{S}_n$ is isomorphic to the poset of inversion sets of permutations in $\mathfrak{S}_n$, partially ordered by inclusion \cite{bjornerbrenti2005}.

\begin{definition}[\cite{ziegler93}]
    \label{definition:single-step-inclusion}
    For a collection $\mathcal{S}$ of sets, the \defn{single step inclusion} partial order on $\mathcal{S}$ is the partial order obtained by taking the transitive closure of the cover relations where $X \lessdot Y$ if and only if $Y = X \cup \{y\}$ for some $y \not\in X$.
\end{definition}

\begin{lemma}[\cite{bjornerbrenti2005,ziegler93}]
\label{lemma:weak_order_isomorphic_to_single_step_inclusion}
The weak order on $\S_n$ is isomorphic to the single step inclusion partial order on $\{\Inv(w) : w \in \S_n\}$.
\end{lemma}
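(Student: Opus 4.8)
The plan is to verify that the map $\phi\colon \mathfrak{S}_n \to \{\Inv(w) : w \in \mathfrak{S}_n\}$, $w \mapsto \Inv(w)$, is an isomorphism of posets between the weak order and the single step inclusion order on its image. First, $\phi$ is a bijection: it is surjective by the definition of its codomain, and injective because a permutation is determined by its inversion set (standard, and part of the ``well-known'' fact already quoted just before the lemma). It then remains to show that $\rho \leq \sigma$ in the weak order if and only if $\Inv(\rho) \leq \Inv(\sigma)$ in single step inclusion; since both orders are transitive closures of their cover relations, it suffices to match up the generating cover relations on the two sides. The forward direction is immediate: if $\rho \lessdot \sigma$, say $\sigma = \rho\tau_i$ with $\rho_i < \rho_{i+1}$, then as already computed in the excerpt $\Inv(\sigma) = \Inv(\rho) \cup \{[\rho_i,\rho_{i+1}]\}$ with $[\rho_i,\rho_{i+1}] \notin \Inv(\rho)$, which is exactly a generating cover relation of single step inclusion.

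The reverse direction is the substantive part. Given a generating relation $\Inv(\sigma) = \Inv(\rho) \cup \{[a,b]\}$ with $a < b$ and $[a,b] \notin \Inv(\rho)$, I want to conclude $\rho \lessdot \sigma$ in the weak order. Since $[a,b]\notin\Inv(\rho)$, the value $a$ precedes $b$ in the one-line notation of $\rho$, and the key claim is that they are adjacent there. I would prove this by contradiction: if some value $c$ lies strictly between $a$ and $b$ in $\rho$, then splitting into the cases $c<a$, $a<c<b$, $c>b$ and using that $[a,b]\in\Inv(\sigma)$, that $\Inv(\rho)\subseteq\Inv(\sigma)$, and which pairs among $a,b,c$ are already inversions of $\rho$, one finds in every case a pair $\neq[a,b]$ in $\Inv(\sigma)\setminus\Inv(\rho)$, contradicting that this set is a singleton. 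Carefully working through the handful of positional configurations of $c$ in $\sigma$ is the step I expect to be the most delicate. Once adjacency is known, $\rho_i=a<b=\rho_{i+1}$ is an ascent of $\rho$, the permutation $\rho\tau_i$ has inversion set $\Inv(\rho)\cup\{[a,b]\}=\Inv(\sigma)$, so $\rho\tau_i=\sigma$ by injectivity of $\phi$, whence $\rho\lessdot\sigma$. Transitivity of the weak order promotes this from generating relations to all of single step inclusion, and together with the forward direction and bijectivity of $\phi$ this yields the asserted isomorphism.

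Alternatively --- and this is probably the version I would actually write up, being shorter --- one can bypass the adjacency argument by invoking the already-cited fact that $\phi$ is an isomorphism from the weak order onto the \emph{inclusion} order on $\{\Inv(w)\}$, so that it only remains to check that single step inclusion and ordinary inclusion agree on this set. One containment of relations is trivial; for the other, given $\Inv(\rho)\subsetneq\Inv(\sigma)$ one has $\rho<\sigma$ in the weak order, so in the finite poset $\mathfrak{S}_n$ there is a $\pi$ with $\rho\lessdot\pi\leq\sigma$, giving $\Inv(\rho)\subsetneq\Inv(\pi)\subseteq\Inv(\sigma)$ with $|\Inv(\pi)\setminus\Inv(\rho)|=1$, and an induction on $|\Inv(\sigma)\setminus\Inv(\rho)|$ produces the required saturated chain of inversion sets.
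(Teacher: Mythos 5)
The paper does not prove this lemma; it is cited to Bj\"orner--Brenti and Ziegler, and the surrounding text merely records the standard facts that $\Inv(\rho\tau_i)=\Inv(\rho)\cup\{[\rho_i,\rho_{i+1}]\}$ for an ascent $i$ and that the weak order is isomorphic to inversion sets under ordinary inclusion. So there is no in-paper proof to compare against; what follows is an assessment of your two arguments on their own terms.

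Both of your arguments are correct. The first (matching cover relations directly) is sound: the forward direction is exactly the identity $\Inv(\rho\tau_i)=\Inv(\rho)\cup\{[\rho_i,\rho_{i+1}]\}$, and the adjacency claim in the backward direction does go through by the case analysis you sketch --- the case $c<a$ and the case $c>b$ each produce a cyclic contradiction among $\sigma^{-1}(a),\sigma^{-1}(b),\sigma^{-1}(c)$, and the case $a<c<b$ is dispatched by the transitivity property of inversion sets (the paper's \cref{lemma:inversion_set_characterization}), since $[a,c],[c,b]\notin\Inv(\sigma)$ would force $[a,b]\notin\Inv(\sigma)$. Your second, shorter argument is also correct and is the one better aligned with what the paper actually supplies: it takes the already-quoted isomorphism with the inclusion order as given and then shows inclusion and single step inclusion coincide on the set $\{\Inv(w)\}$ by pulling a saturated chain back through the weak order. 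Either would serve; the second leans more heavily on the cited background and is closer in spirit to what the paper implicitly intends, while the first is self-contained modulo \cref{lemma:inversion_set_characterization}.
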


\begin{lemma}[\cite{ziegler93}]
    \label{lemma:inversion_set_characterization}
    A subset $I \subseteq \binom{[n]}{2}$ is the inversion set of some permutation in $\S_n$ if and only if $I$ satisfies the following two properties:
    \begin{itemize}
        \item $(i,j), (j,k) \in I$ implies $(i,k) \in I$ for all $i < j < k \in [n]$,
        \item $(i,j), (j,k) \not\in I$ implies $(i,k) \not\in I$ for all $i < j < k \in [n]$.
    \end{itemize}
\end{lemma}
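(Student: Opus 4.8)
The forward implication is essentially automatic. Writing $I=\Inv(\rho)$, the condition $(i,j)\in I$ is equivalent to $\rho^{-1}(i)>\rho^{-1}(j)$, so the first bullet point of the lemma is just transitivity of the linear order on positions, $\rho^{-1}(i)>\rho^{-1}(j)>\rho^{-1}(k)\Rightarrow\rho^{-1}(i)>\rho^{-1}(k)$, and the second bullet point is the same statement read with all inequalities reversed. So the real content is the converse, and the plan is to reconstruct (the one-line notation of) a permutation directly from $I$.

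Given $I\subseteq\binom{[n]}{2}$ satisfying the two properties, I would define a tournament $\prec$ on $[n]$ by setting, for $a<b$, $a\prec b$ iff $(a,b)\notin I$ and $b\prec a$ iff $(a,b)\in I$. For each pair exactly one of $a\prec b$, $b\prec a$ holds, so the only thing to check is that $\prec$ is transitive, hence a strict total order. To see this, suppose $a\prec b\prec c$ and split into the six possible relative orders of $a,b,c$ as integers; in each case the desired conclusion $a\prec c$ follows from exactly one of the two hypotheses, applied to the triple written in increasing order. For instance, if $b<a<c$, then $a\prec b$ means $(b,a)\in I$ and $b\prec c$ means $(b,c)\notin I$; were $(a,c)\in I$, the first property applied to $b<a<c$ would force $(b,c)\in I$, a contradiction, so $(a,c)\notin I$, i.e.\ $a\prec c$. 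If instead $b<c<a$, then $(b,a)\in I$ and $(b,c)\notin I$, and the second property applied to $b<c<a$ rules out $(c,a)\notin I$, giving $(c,a)\in I$, i.e.\ $a\prec c$; the remaining four cases are handled the same way, alternately invoking the first or the second property. This case analysis is the only genuine obstacle, and it is routine — the two properties are precisely what is needed to close each case.

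Once $\prec$ is known to be a strict total order, let $(\rho_1,\dots,\rho_n)$ list $[n]$ in increasing $\prec$-order and take $\rho\in\S_n$ to be the permutation with this one-line notation, so that $\rho^{-1}(\rho_t)=t$. For $i<j$ we then have $(i,j)\in\Inv(\rho)$ iff $\rho^{-1}(i)>\rho^{-1}(j)$ iff $j\prec i$ iff, by definition of $\prec$, $(i,j)\in I$; hence $\Inv(\rho)=I$, which finishes the converse and the lemma. (One could alternatively argue by induction on $n$, deleting the value $n$ and checking that the restriction of $I$ to $\binom{[n-1]}{2}$ still satisfies the two properties, but the direct construction above seems cleaner.)
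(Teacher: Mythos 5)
The paper does not actually supply a proof of this lemma---it is stated with a citation to Ziegler (and Bj\"orner--Brenti) as a known fact about inversion sets, and no argument is given. So there is no paper proof to compare against; the question is simply whether your self-contained argument is correct, and it is.

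Your proof is sound. The forward direction is indeed just transitivity of the integer order on positions $\rho^{-1}(i)$, read once directly and once with all inequalities flipped. For the converse, the reduction to showing the tournament $\prec$ is transitive is the right move, since a transitive tournament on $[n]$ is the same thing as a total order, which in turn is the same thing as a permutation in one-line notation; and the identity $(i,j)\in\Inv(\rho)\iff j\prec i\iff(i,j)\in I$ for $i<j$ then closes the loop. I checked the case analysis: writing the triple $\{a,b,c\}$ as $p<q<r$ in increasing order, the four cases in which $q$ is neither $\prec$-minimal nor $\prec$-maximal among the three (so that one of the two hypotheses applies directly) use property~(1) or~(2) outright, while the two cases in which $q$ is $\prec$-extremal are handled by contradiction exactly as in your two worked examples; all six go through. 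This is essentially the standard proof of the characterization (sometimes phrased via ``the complement of $I$ together with $I$-reversed defines an acyclic tournament''), and it is self-contained, which the paper's citation-only statement is not. The alternative induction you mention in parentheses also works, and is closer in spirit to the deletion operation developed later in the paper, but the direct construction is cleaner for a lemma this elementary.
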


In general, the inclusion partial order and the single step inclusion partial order on a collection of sets are not isomorphic. However, the two partial orders coincide in the case of the inversion sets of permutations in $\mathfrak{S}_n$. Further details on the weak order can be found in \cite{bjornerbrenti2005}.

\subsection{Higher Bruhat Orders}

\label{subsection:higher-bruhat-order-intro}
 In this subsection, the higher Bruhat orders are reviewed, according to the definitions of Manin--Schechtman and Ziegler. For an ordered set $X = [x_1, \ldots, x_k] \in \binom{[n]}{k}$ and a sequence of positive integers $1 \le i_1 < i_2 < \cdots < i_j \le k$, the notation $X_{i_1, \ldots, i_j}$ denotes the ordered set $X \setminus \{x_{i_1}, \ldots, x_{i_j}\}$. For example, if $X = [1, 5, 6, 8, 9]$, then $X_{3,5} = [1,5,8]$. The \defn{packet} of an ordered set $X$ is the unordered set of all size $(k-1)$ subsets of $X$, denoted $P(X) = \{\X_1, \X_2, \ldots, \X_k\}$. For a total order $\rho$ on a set $\mathcal{S}$ and a subset $I \subseteq \mathcal{S}$, the \defn{restriction} $\rho|_I$ is the total order on $I$ inherited from $\rho$.
 
 The \defn{lexicographic} (lex) order on $\binom{[n]}{k}$ is the total order on $\binom{[n]}{k}$ where, for any two elements $[i_1, \ldots, i_k], [j_1, \ldots, j_k] \in \binom{[n]}{k}$, $[i_1, \ldots, i_k] < [j_1, \ldots, j_k]$ if and only if there exists $r \in [k]$ such that $i_r < j_r$, and $i_1 = j_1$, $i_2 = j_2$, $\cdots$, $i_{r-1} = j_{r-1}$. The \defn{antilexicographic} (antilex) order is the total order on $\binom{[n]}{k}$ obtained by reversing the lex order. For a packet $P(X)$, the lex order  is $(\X_k, \X_{k-1}, \ldots, \X_1)$, and the antilex order is $(\X_1, \X_2, \ldots, \X_k)$. Notice that the indices of $X_k, X_{k-1}, \ldots, X_1$ are in decreasing order for the lex order on $P(X)$ since omitting the largest element $x_k$ corresponds to the lexicographically first element $X_k = [x_1, x_2, \ldots, x_{k-1}]$. A \defn{prefix} of $P(X)$ in lex order is a (possibly empty) ordered subset of the form $(X_k, X_{k-1}, \ldots, X_i)$, and a \defn{suffix} of $P(X)$ in lex order is a (possibly empty) ordered subset of the form $(X_i, X_{i-1}, \ldots, X_1)$.

\begin{example}
    The lex order on $\binom{[4]}{2}$ is the total order
    \[
    [1,2] < [1,3] < [1,4] < [2,3] < [2,4] < [3,4],
    \]
    which is identified with the ordered sequence
    \[
    ([1,2], [1,3], [1,4], [2,3], [2,4], [3,4]).
    \]
    When unambiguous, the brackets and commas are omitted from an ordered set. Thus, the lex order on $\binom{[4]}{2}$ may be written more compactly as $(12, 13, 14, 23, 24, 34)$. The antilex order on $\binom{[4]}{2}$ is $(34, 24, 23, 14, 13, 12)$. The restriction of the antilex order on $\binom{[4]}{2}$ to the packet $P(123)$ is $(23, 13, 12)$.    
\end{example}

\begin{definition}[\cite{maninschechtman89}]
A total order $\rho$ on $\binom{[n]}{k}$ is \defn{admissible} if for every $X \in \binom{[n]}{k+1}$, the restriction $\rho|_{P(X)}$ is either the lex or antilex order on $P(X)$. The set of admissible orders on $\binom{[n]}{k}$ is denoted $\mathbf{\A{n}{k}}$. Given an admissible order $\rho \in \A{n}{k}$, its \defn{reversal set} is 
\begin{equation}
\boldsymbol{\reviso{n}{k}(\rho)} = \left\{X \in \binom{[n]}{k+1} \ssep \text{$\rho|_{P(X)}$ is the antilex order}\right\}.
\end{equation}
\end{definition}

The subscript in $\Rev_{n,k}$ is dropped when $k$ and $n$ are clear from context. For integers $1 \le k \le n$, the lex (resp. antilex) order on $\binom{[n]}{k}$ is always an admissible order in $\A{n}{k}$ since the restriction to any packet is always the lex (resp. antilex) order on the packet. The reversal set of the lex order on $\binom{[n]}{k}$ is the empty set since there are no packets in antilex order. The reversal set of the antilex order on $\binom{[n]}{k}$ is $\binom{[n]}{k+1}$ since the restriction to every packet is the antilex order. 

\begin{example}
    Consider the total order
    \begin{equation}
    \label{equation:example-admissible-order}
    \rho = (23, 24, 25, 45, 13, 15, 35, 14, 34, 12).
    \end{equation}
    The restriction of $\rho$ to the packet $P(145) = \{14, 15, 45\}$ is $\rho|_{P(145)} = (45, 15, 14)$, which is the antilex order on $P(145)$. One can check that for every $X \in \binom{[5]}{3}$, the restriction of $\rho$ to $P(X)$ is either the lex or antilex order on $P(X)$. Thus, $\rho$ is in $\A{5}{2}$. Its reversal set is
    \begin{equation}
        \label{equation:example-reversal-set}
        \Rev(\rho) = \{123, 124, 125, 145, 345\}.
    \end{equation}
\end{example}

\begin{example}
    As a nonexample, consider transposing $13$ and $15$ in the total order $\rho$ of \eqref{equation:example-admissible-order}. This yields the total order
    \begin{equation}
        \tau = (23, 24, 25, 45, 15, 13, 35, 14, 34, 12).
    \end{equation}
    The restriction of $\tau$ to the packet $P(135) = \{13, 15, 35\}$ is $\tau|_{P(135)} = (15, 13, 35)$, which is neither the lex order nor the antilex order on $P(135)$. Thus, $\tau$ is \emph{not} an admissible order.

    Note that $P(13) \cap P(15) \neq \varnothing$. However, $P(45) \cap P(13) = \varnothing$ and one can check that transposing $45$ and $13$ in $\rho$ does yield an admissible total order. 
\end{example}

Subsets $X,Y \in \binom{[n]}{k}$ are said to \defn{commute} if $P(X) \cap P(Y) = \varnothing$. If $\rho = (\rho_1, \ldots, \rho_{\binom{n}{k}}) \in \A{n}{k}$ and $\rho_i, \rho_{i+1}$ commute, then one can observe that the total order obtained by transposing $\rho_i$ and $\rho_{i+1}$ is also an admissible order. Two admissible orders $\rho, \rho' \in \A{n}{k}$ are \defn{commutation equivalent}, denoted $\boldsymbol{\rho \sim \rho'}$, if $\rho'$ can be obtained from $\rho$ by a sequence of commutations. The \defn{commutation class} of an admissible order $\rho \in \A{n}{k}$, denoted $\boldsymbol{[\rho]}$, is the set of admissible orders that are commutation equivalent to $\rho$. Note that reversal sets are invariant under commutations, so $\Rev([\rho])$ is well-defined for a commutation class $[\rho]$.

Let $\rho = (\rho_1, \ldots, \rho_{\binom{n}{k}}) \in \A{n}{k}$ and $X \in \binom{[n]}{k+1}$, such that $\rho|_{P(X)}$ is a saturated chain $(\rho_i, \rho_{i+1}, \ldots, \rho_{i+k})$ of $\rho$. Reversing the order on the saturated chain yields a new total order $\sigma = (\rho_1, \ldots, \rho_{i-1}, \rho_{i+k}, \ldots, \rho_i, \rho_{i+k+1}, \ldots, \rho_{\binom{n}{k}})$. If $\rho|_{P(X)}$ is the lex (resp. antilex) order on $P(X)$, then $\sigma$ is said to be obtained from $\rho$ by a \defn{packet flip of $P(X)$ from lex to antilex} (resp. antilex to lex). One can observe that the new total order $\sigma$ is also an admissible order.
\begin{definition}[\cite{maninschechtman89}]
    \label{definition:higher-bruhat-order-1}
    For integers $1 \le k \le n$, the \defn{higher Bruhat order} $\mathbf{\B{n}{k}}$ is the poset on commutation classes $\A{n}{k}/\hspace{-0.2em}\sim$ obtained by taking the transitive closure of the cover relations where $[\rho] \lessdot [\sigma]$ if and only if there exist $\rho' \in [\rho]$ and $\sigma' \in [\sigma]$ such that $\sigma'$ is obtained from $\rho'$ by a packet flip from lex to antilex.
\end{definition}

\begin{example}
\label{example:total-order-reversal-set}
 The total order $\rho = (23, 13, 24, 14, 12, 34)$ is an admissible order in $\A{4}{2}$ with reversal set $\Rev(\rho) = \{123, 124\}$. The commutation class $[\rho]$ is
 \[
    \{(23, 24, 13, 14, 12, 34), (23, 13, 24, 14, 34, 12),
    (23, 24, 13, 14, 34, 12), (23, 13, 24, 14, 12, 34)\},
 \]
 obtained by commuting only $(13, 24)$, only $(12, 34)$, both, or neither. The admissible order $\sigma = (23, 24, 34, 14, 13, 12)$ is obtained from $(23, 24, 13, 14, 34, 12) \in [\rho]$ by a packet flip of $P(134) = \{13, 14, 34\}$ from lex to antilex. Thus, $[\rho] \lessdot [\sigma]$ in $\B{4}{2}$. The Hasse diagram of $\B{4}{2}$ is depicted on the left in \cref{fig:b-4-2} with the corresponding reversal sets on the right, ordered by single step inclusion.

 \begin{figure}[!ht]
     \centering
\scalebox{1.0}{
            \begin{tikzpicture}[every node/.style={font=\small}]
            % Left hexagonal structure
            \node (a1) at (0, 2.5) {$[(34, 24, 23, 14, 13, 12)]$};
            \node (a2) at (-2,1.25) {$[(23, 24, 34, 14, 13, 12)]$};
            \node (a3) at (2,1.25) {$[(34, 24, 14, 12, 13, 23)]$};
            \node (a4) at (-2,0) {$[(23, 13, 24, 14, 12, 34)]$};
            \node (a5) at (2,0) {$[(12, 34, 14, 13, 24, 23)]$};
            \node (a6) at (-2,-1.25) {$[(23, 13, 12, 14, 24, 34)]$};
            \node (a7) at (2,-1.25) {$[(12, 13, 14, 34, 24, 23)]$};
            \node (a8) at (0,-2.5) {$[(12, 13, 14, 23, 24, 34)]$};
            
            % Draw lines for the left hexagon
            \draw (a1) -- (a2);
            \draw (a1) -- (a3);
            \draw (a2) -- (a4);
            \draw (a3) -- (a5);
            \draw (a4) -- (a6);
            \draw (a5) -- (a7);
            \draw (a6) -- (a8);
            \draw (a7) -- (a8);
            \end{tikzpicture}
        }
        \scalebox{1.0}{
            \begin{tikzpicture}[every node/.style={font=\small}]
            % Left hexagonal structure
            \node (a1) at (0, 2.5) {$\{123, 124, 134, 234\}$};
            \node (a2) at (-2,1.25) {$\{123, 124, 134\}$};
            \node (a3) at (2,1.25) {$\{124, 134, 234\}$};
            \node (a4) at (-2,0) {$\{123, 124\}$};
            \node (a5) at (2,0) {$\{134, 234\}$};
            \node (a6) at (-2,-1.25) {$\{123\}$};
            \node (a7) at (2,-1.25) {$\{234\}$};
            \node (a8) at (0,-2.5) {$\varnothing$};
            
            % Draw lines for the left hexagon
            \draw (a1) -- (a2);
            \draw (a1) -- (a3);
            \draw (a2) -- (a4);
            \draw (a3) -- (a5);
            \draw (a4) -- (a6);
            \draw (a5) -- (a7);
            \draw (a6) -- (a8);
            \draw (a7) -- (a8);
            \end{tikzpicture}
        }

    \caption{The partial orders $\B{4}{2}$ on the left and $\C{4}{3}$ on the right.}
     \label{fig:b-4-2}
 \end{figure}
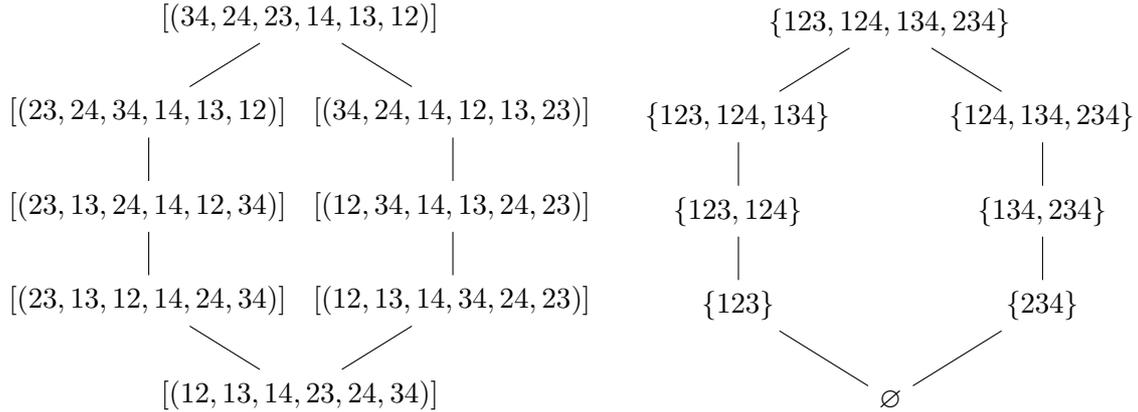
\end{example}

\begin{definition}[\cite{ziegler93}]
For integers $1 \le k \le n$, a subset $I \subseteq \binom{[n]}{k}$ is \defn{consistent} if for every $X \in \binom{[n]}{k+1}$, the intersection $P(X) \cap I$ is either a prefix or a suffix of $P(X)$ in lex order. The poset of all consistent subsets of $\binom{[n]}{k}$ ordered by single step inclusion is denoted $\boldsymbol{\C{n}{k}}$.
\end{definition}

\begin{example}
    One can check that the reversal set $\Rev(\rho)$ in \eqref{equation:example-reversal-set} is a consistent subset in $\C{5}{3}$ by considering $\Rev(\rho) \cap P(X)$ for all $X \in \binom{[5]}{4}$. For example, the intersection $\Rev(\rho) \cap P(1234) = (123, 124)$ is a prefix of $P(1234)$ in lex order, and the intersection $\Rev(\rho) \cap P(1345) = (145, 345)$ is a suffix of $P(1345)$ in lex order. The remaining 3 subsets are left to the reader.
\end{example}

Total orders on $\binom{[n]}{1}$ are easily identified with permutations in $\S_n$. Under this identification, each permutation is in its own commutation class, and packet flips correspond to adjacent transpositions. Furthermore, reversal sets of total orders of $\binom{[n]}{1}$ correspond to inversion sets of permutations in $\S_n$, which are the consistent subsets of $\binom{[n]}{2}$. One can check from the definitions that $\B{n}{1}$ is isomorphic to $\S_n$ under the weak order. By \cref{lemma:weak_order_isomorphic_to_single_step_inclusion} and \cref{lemma:inversion_set_characterization}, $\C{n}{2}$ is also isomorphic to $\S_n$ under the weak order, and hence $\B{n}{1}$ and $\C{n}{2}$ are isomorphic as posets. This isomorphism motivated the following theorem due to Ziegler.

\begin{theorem}[{\cite[Theorem~4.1]{ziegler93}}]
    \label{theorem:higher-bruhat-order-isomorphism}
    For integers $1 \le k < n$, the map $\reviso{n}{k}$ is a poset isomorphism between $\B{n}{k}$ and $\C{n}{k+1}$.
\end{theorem}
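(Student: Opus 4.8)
The plan is to establish four properties of $\reviso{n}{k}$ in turn: that it is well-defined as a map $\B{n}{k}\to\C{n}{k+1}$, that it is order-preserving, that it is a bijection, and that its inverse is order-preserving; these together give the isomorphism. That reversal sets are invariant under commutations is already noted, so $\reviso{n}{k}$ descends to commutation classes, and what remains for well-definedness is to check that each reversal set $I=\reviso{n}{k}(\rho)$ is consistent. This is a local condition, one $Z\in\binom{[n]}{k+2}$ at a time: restricting an admissible order $\rho$ on $\binom{[n]}{k}$ to the ground set $\binom{Z}{k}$ yields an admissible order on $\binom{Z}{k}\cong\binom{[k+2]}{k}$ whose reversal set is exactly $I\cap P(Z)$, using that $\binom{Z}{k+1}=P(Z)$. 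So it suffices to treat $n=k+2$ and show that the reversal set of any admissible order on $\binom{[k+2]}{k}$ is a prefix or a suffix of the full packet $P([k+2])=\binom{[k+2]}{k+1}$ in lex order — a small explicit verification, equivalent to the classical description of $\B{n}{n-2}$ as a $2n$-element poset.

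For order-preservation, observe that a packet flip of $P(X)$ from lex to antilex reverses a block of $k+1$ consecutive entries that is exactly $P(X)$; since any other packet $P(Y)$ with $Y\ne X$ meets $P(X)$ in at most one element (namely $X\cap Y$, and only when $|X\cap Y|=k$), reversing that block leaves the relative order inside $P(Y)$ unchanged. Hence such a flip changes the reversal set by adjoining the single set $X$, so $\reviso{n}{k}$ carries a cover $[\rho]\lessdot[\sigma]$ of $\B{n}{k}$ to a single-step-inclusion cover $\reviso{n}{k}(\rho)\lessdot\reviso{n}{k}(\sigma)$ of $\C{n}{k+1}$. Since the partial order on $\B{n}{k}$ is the transitive closure of its covers, $\reviso{n}{k}$ is order-preserving; in particular $\B{n}{k}$ genuinely is a poset, since it maps order-preservingly into the honest poset $\C{n}{k+1}$.

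Bijectivity and order-preservation of the inverse I would both derive, by induction on $|I|$, from a single \emph{key lemma}: if $\rho$ is an admissible order on $\binom{[n]}{k}$ with reversal set $I$ and $X\in\binom{[n]}{k+1}\setminus I$ is such that $I\cup\{X\}$ is consistent, then some representative of $[\rho]$ has $P(X)$ occurring as a saturated lex chain, and flipping that chain produces an admissible order with reversal set $I\cup\{X\}$ (and symmetrically for removing from $I$ its lex-largest element). Granting the key lemma, surjectivity follows by building an arbitrary consistent $I$ up one element at a time from $\varnothing$, the reversal set of the lexicographic order, using the elementary fact that a nonempty consistent set remains consistent after deleting its lex-largest element (a one-line check on each packet, since that element sits at the boundary of the relevant prefix or suffix); injectivity follows by the mirror argument, peeling that lex-largest element off two orders with the same reversal set and invoking the inductive hypothesis, together with a short argument that the relevant flip is well-defined on commutation classes; and the claim that a single-step cover of $\C{n}{k+1}$ pulls back to a cover of $\B{n}{k}$ is precisely the ``adjoin $X$'' half of the key lemma. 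A bijection that is order-preserving with order-preserving inverse is a poset isomorphism, completing the proof.

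The \emph{main obstacle} is the key lemma. Consistency of $I\cup\{X\}$ is visibly necessary for the flip to be available; the content is that it is also sufficient — that the $k+1$ entries of $P(X)$ can be slid into consecutive position within the commutation class $[\rho]$ with no intervening obstruction. Making this precise requires controlling, for every $Z\in\binom{[n]}{k+2}$ with $X\subset Z$, how the entries of $P(X)$ interleave with the prefix-or-suffix block $I\cap P(Z)$. This is exactly the point at which Ziegler \cite{ziegler93} passes to oriented matroids, encoding admissible orders and packet flips as single-element extensions of the alternating (cyclic, uniform) matroid $C^{n,k}$, for which the required maneuverability becomes a localization property of such extensions; a more self-contained alternative is a delicate induction on $n$ via the restriction maps $\binom{[n]}{k}\to\binom{[n-1]}{k}$ used above for consistency. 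Either way, this realizability step is the heart of the matter, with everything else bookkeeping.
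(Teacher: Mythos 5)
The paper does not itself prove this theorem; it is stated as a citation to Ziegler's Theorem~4.1, and the introduction notes that Ziegler's proof relies on single-element extensions of alternating matroids. Your decomposition is sound in outline, and you are candid about where the weight lies: the key lemma realizing a consistent single-step extension $I\cup\{X\}$ by an actual available packet flip within $[\rho]$ is indeed the substantive step, and it is exactly where Ziegler passes to oriented matroids. The reduction of consistency to $n=k+2$ is a correct observation, and your order-preservation argument (a flip of $P(X)$ meets any other packet $P(Y)$ in at most one element, so only the restriction to $P(X)$ changes) is correct.

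There is, however, a concrete error in a step you present as trivial. The ``elementary fact that a nonempty consistent set remains consistent after deleting its lex-largest element'' is false. Take $I=\{134,234\}\in\C{4}{3}$ (it appears in Figure~1 of the paper); its lex-largest element is $234$, and $I\setminus\{234\}=\{134\}$ is not consistent, since $P(1234)\cap\{134\}=\{134\}$ is neither a prefix nor a suffix of $P(1234)=(123,124,134,234)$. The ``one-line check'' breaks because when $P(Z)\cap I$ is a proper suffix $(Z_i,\ldots,Z_1)$ with $i\ge 2$, the lex-largest element of that suffix is $Z_1$, and removing $Z_1$ leaves $\{Z_i,\ldots,Z_2\}$, which is neither a prefix nor a suffix. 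Removing the lex-smallest element fails just as badly (for $\{123,124\}$ removing $123$ leaves $\{124\}$, not consistent). There is no uniform peeling rule: which element of $I$ can be removed depends on $I$, and proving that some consistent $I\setminus\{X\}$ always exists (equivalently, that $\varnothing$ is the unique minimum of $\C{n}{k+1}$ under single-step inclusion, so your induction on $|I|$ has a ladder to climb) is itself part of the content of Ziegler's theorem, not a prerequisite you may take for granted. This gap sits alongside, and is arguably intertwined with, the key lemma you correctly flag as the heart of the matter; and I would also not describe the $n=k+2$ base case of well-definedness as a ``small explicit verification,'' since it is a $k$-parametrized statement requiring its own argument.
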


\begin{example}
    The Hasse diagram of $\C{4}{3}$ is drawn on the right in \cref{fig:b-4-2}. Per \cref{theorem:higher-bruhat-order-isomorphism}, $\B{4}{2}$ and $\C{4}{3}$ are isomorphic as posets, with the isomorphism sending $[\rho] \mapsto \Rev([\rho])$, which can be seen from the figure.
\end{example}

\subsection{Totally Symmetric Plane Partitions}
For positive integers $r$, $s$, and $t$, a \defn{plane partition} that fits in an $r \times s \times t$ box is a subset $T$ of the Cartesian product $[r] \times [s] \times [t]$ such that, if $(x_1, x_2, x_3) \in T$, then $[x_1] \times [x_2] \times [x_3] \subseteq T$. For plane partitions, the inclusion partial order and single step inclusion partial order are isomorphic. The number of plane partitions that fit inside an $r \times s \times t$ box is given by a celebrated product formula due to MacMahon \cite{macmahon-1960}
\begin{equation}
    \label{equation:macmahon-product-formula}
    \prod_{1 \le i,j,k \le n} \frac{i+j+k-1}{i+j+k-2}.
\end{equation}

\begin{definition}
    For a positive integer $n$, a \defn{totally symmetric plane partition} (TSPP) that fits in an $n \times n \times n$ box is a plane partition $T \subseteq [n]^3$ such that, for all $\sigma \in \mathfrak{S}_3$, if $(x_1,x_2,x_3) \in T$, then $(x_{\sigma(1)}, x_{\sigma(2)}, x_{\sigma(3)}) \in T$. 
\end{definition}

Let $\T_n$ denote the set of all TSPPs that fit in an $n \times n \times n$ box.
The number of TSPPs $|\T_n|$ is given by a product formula analogous to \eqref{equation:macmahon-product-formula} due to Stembridge \cite{stembridge95}
\begin{equation}
    \label{equation:stembridge-product-formula}
    |\T_n| = \prod_{1 \le i \le j \le k \le n} \frac{i+j+k-1}{i+j+k-2}.
\end{equation}
Note that the two product formulas differ only in that $i,j,k$ are weakly increasing in \eqref{equation:stembridge-product-formula}.

\begin{example}
    \label{example:TSPP}
    The set $T = \{(1,1,1), (1,1,2), (1,1,3), (1,2,1), (1,1,3), (2,1,1), (3,1,1)\}$ is a totally symmetric plane partition that fits in a $3 \times 3 \times 3$ box. One can visualize $T$ as a set of boxes in $\mathbb{R}^3$ as in \cref{figure:TSPP example}.

    \begin{figure}[!ht]
        \centering
        \begin{tikzpicture}[scale=0.8]
        \planepartition{{3,1,1},{1},{1}}
        \end{tikzpicture}
        \caption{The TSPP in \cref{example:TSPP}.}
        \label{figure:TSPP example}
    \end{figure}
\end{example}

\section{Dual Higher Bruhat Orders}

\label{section:dual-orders}
In \cref{subsection:higher-bruhat-order-intro}, the definitions of admissible orders in $\A{n}{k}$ or consistent sets in $\C{n}{k}$ involved packets $P(X)$ for sets $X \in \binom{[n]}{k+1}$ of cardinality $k+1$. By considering sets $X \in \binom{[n]}{k-1}$ of cardinality $k-1$, one is led to develop the notion of the dual higher Bruhat order. In this subsection, unless otherwise specified, $k$ and $n$ are fixed positive integers that satisfy $1 \le k \le n$.

\begin{definition}
    For $X \in \binom{[n]}{k}$, the \defn{copacket} of $X$ is $P^*_n(X) = \{X \cup \{i\} : i \in [n] \setminus X\}$.
\end{definition}

Unlike $P(X)$, the copacket is dependent on the ambient set $[n]$. When it is clear from context, the subscript $n$ will be suppressed, so that $P^*(X)$ denotes the copacket of $X$ with respect to $[n]$. The $i$th element of $P^*(X)$ in lex order will be written as $X^i$.

\begin{lemma}
    \label{lemma:commute-cocommute-coincide}
    For $X,Y \in \binom{[n]}{k}$, $P(X) \cap P(Y) = \varnothing$ if and only if $P^*(X) \cap P^*(Y) = \varnothing$.
\end{lemma}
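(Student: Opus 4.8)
The plan is to convert each of the two emptiness conditions into one numerical condition on $X$ and $Y$, and then note that the two conditions coincide. The only tool needed is the identity $|X \cap Y| + |X \cup Y| = |X| + |Y| = 2k$. Concretely, I will show that $P(X) \cap P(Y) \ne \varnothing$ is equivalent to $|X \cap Y| \ge k - 1$, that $P^*(X) \cap P^*(Y) \ne \varnothing$ is equivalent to $|X \cup Y| \le k + 1$, and that these are the same condition by the identity; negating both ends then gives the lemma.

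For the packet side: any member of $P(X) \cap P(Y)$ is a $(k-1)$-element set contained in both $X$ and $Y$, hence contained in $X \cap Y$, which forces $|X \cap Y| \ge k - 1$. Conversely, if $|X \cap Y| = k$ then $X = Y$ and $P(X) = P(Y)$ is nonempty, while if $|X \cap Y| = k - 1$ then $X \cap Y$ is itself a common $(k-1)$-subset of $X$ and $Y$, so it lies in $P(X) \cap P(Y)$. Hence $P(X) \cap P(Y) \ne \varnothing$ if and only if $|X \cap Y| \ge k - 1$.

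The copacket side is the dual argument. Any member of $P^*(X) \cap P^*(Y)$ is a $(k+1)$-element subset of $[n]$ containing both $X$ and $Y$, hence containing $X \cup Y$, so $|X \cup Y| \le k + 1$. Conversely, since $|X \cup Y| \ge |X| = k$, the hypothesis $|X \cup Y| \le k + 1$ leaves exactly two cases. If $|X \cup Y| = k$ then $X = Y$ and $P^*(X) \cap P^*(Y) = P^*(X)$, which is nonempty since $[n] \setminus X \ne \varnothing$. If $|X \cup Y| = k + 1$ then $X \ne Y$, the set $Y \setminus X$ is a singleton $\{y\}$ with $y \in [n] \setminus X$, so $X \cup Y = X \cup \{y\} \in P^*(X)$, and symmetrically $X \cup Y \in P^*(Y)$. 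In either case $P^*(X) \cap P^*(Y) \ne \varnothing$, so $P^*(X) \cap P^*(Y) \ne \varnothing$ if and only if $|X \cup Y| \le k + 1$.

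Combining the two equivalences via $|X \cap Y| = 2k - |X \cup Y|$ shows that $P(X) \cap P(Y) \ne \varnothing$, $|X \cap Y| \ge k-1$, $|X \cup Y| \le k+1$, and $P^*(X) \cap P^*(Y) \ne \varnothing$ are all equivalent; negating both ends yields the lemma. I do not expect a real obstacle here — the statement is essentially a counting exercise — and the only point needing care is the step in the copacket converse where the case $X = Y$ uses $[n] \setminus X \ne \varnothing$; this is where $k < n$ enters, and the statement is best read in that range (when $k = n$ the set $\binom{[n]}{k}$ has a single element). One could alternatively run the argument through complementation $X \mapsto [n] \setminus X$, which carries $P^*(X)$ to the packet of $[n] \setminus X$ inside $\binom{[n]}{n-k}$ and thereby reduces the claim to invariance of commutation under complementation, but that rests on the same count.
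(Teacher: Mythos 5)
Your proof is correct and follows essentially the same route as the paper: both reduce $P(X) \cap P(Y) = \varnothing$ and $P^*(X) \cap P^*(Y) = \varnothing$ to a single numerical condition on how far $X$ and $Y$ overlap (the paper phrases it as $|X \setminus Y| > 1$, you phrase it as $|X \cap Y| < k-1$ or equivalently $|X \cup Y| > k+1$, and these are the same count via $|X| = |Y| = k$). You also correctly flag the $k = n$ edge case, where $P^*(X) = \varnothing$ but $P(X) \ne \varnothing$ so the stated equivalence would fail; the paper's one-line proof silently assumes $k < n$ there too, and since the lemma is only used with $k < n$ this is harmless, but your observation is accurate.
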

\begin{proof}
    It suffices to note that $P(X) \cap P(Y) = \varnothing$ and $P^*(X) \cap P^*(Y) = \varnothing$ are both equivalent to the condition $|X \setminus Y| > 1$ and $|Y \setminus X| > 1$. 
\end{proof}

Recall that two elements $X,Y \in \binom{[n]}{k}$ commute if $P(X) \cap P(Y) = \varnothing$. It would be natural to say that $X,Y$ cocommute if $P^*(X) \cap P^*(Y) = \varnothing$, but \cref{lemma:commute-cocommute-coincide} implies that $X,Y$ cocommute if and only if $X,Y$ commute. Henceforth, even in the context of duality, elements will be said to commute since the definitions of cocommutation and commutation coincide.

\begin{definition}
    \label{definition:coadmissible-order}
    A total order $\rho$ on $\binom{[n]}{k}$ is \defn{coadmissible} if the restriction $\rho|_{P^*(X)}$ is the lex or antilex order on $P^*(X)$ for every $X \in \binom{[n]}{k-1}$. The set of all coadmissible orders on $\binom{[n]}{k}$ is denoted $\mathbf{\Astar{n}{k}}$. The commutation class of a coadmissible order $\rho$ is written $\boldsymbol{[\rho]}$.
\end{definition}

It is not difficult to see that if $\rho \in \Astar{n}{k}$ is a coadmissible order, then transposing adjacent elements that cocommute yields a new coadmissible order. For $X \in \binom{[n]}{k-1}$, if the copacket $P^*(X) = (X^1, X^2, \ldots, X^{n-(k-1)})$ occurs as a saturated chain in $\rho$, then reversing the order of $P^*(X)$ in $\rho$ yields a new coadmissble order said to be obtained by a \defn{copacket flip of $P^*(X)$ from lex to antilex}.

\begin{definition}
    The \defn{dual higher Bruhat order} $\Bstar{n}{k}$ is the partial order on commutation classes of elements in $\Astar{n}{k}$, where $[\rho] \lessdot [\sigma]$ if and only if there exists $\rho' \in [\rho]$ and $\sigma' \in [\sigma]$  such that $\sigma'$ is obtained from $\rho'$ by a copacket flip from lex order to antilex order.
\end{definition}

\begin{definition}
    \label{definition:coreversal-set}
    The \defn{coreversal set} of a coadmissible order $\rho \in \Astar{n}{k}$ is the set
    \[
    \coreviso{n}{k}(\rho) = \left\{X \in \binom{[n]}{k-1} : \text{$\rho|_{P^*(X)}$ is in antilex order}\right\}.
    \]
    When $n$ and $k$ are clear from context, the subscript is omitted in $\coreviso{n}{k}$.
\end{definition}

\begin{definition}
    A set $I \subseteq \binom{[n]}{k}$ is \defn{coconsistent} if for all $X \in \binom{[n]}{k-1}$, $P^*(X) \cap I$ is a prefix or suffix of $P^*(X)$ in lex order. The poset of coconsistent subsets of $\binom{[n]}{k}$ partially ordered by single step inclusion is denoted $\Cstar{n}{k}$.
\end{definition}

\begin{example}
    For $k = 2$ and $n = 4$, consider the total order $\rho = (12, 34, 23, 13, 24, 14)$ on $\binom{[4]}{2}$. The restriction of $\rho$ to the copacket $P^*(\{3\}) = \{13, 23, 34\}$ is the antilex order $(34, 23, 13)$. One can check that the restriction of $\rho$ to the other copackets $P^*(\{1\})$, $P^*(\{2\})$, and $P^*(\{4\})$ are all either the lex order or the antilex order. Thus, $\rho$ is a coadmissible order in $\Astar{4}{2}$. The coreversal set of $\rho$ is $\Corev(\rho) = \{\{3\}, \{4\}\}$. Observe that $\Corev(\rho)$ is a coconsistent set in $\Cstar{4}{1}$ since the intersection $\Corev(\rho) \cap P^*(\varnothing)$ is $\{\{3\},\{4\}\}$, which is a suffix of the lex order on $P^*(\varnothing)$. This is the only intersection one needs to check since $\binom{[4]}{0} = \{\varnothing\}$.
\end{example}

Recall that by \cref{theorem:higher-bruhat-order-isomorphism}, the map $\reviso{n}{k}$ is a well-defined isomorphism between $\B{n}{k}$ and $\C{n}{k+1}$. Before proving \cref{theorem:fundamental-duality}, the maps $\beta_{n,k}$, and $\gamma_{n,k}$ need to be defined. The proof that $\coreviso{n}{k}$, $\beta_{n,k}$, and $\gamma_{n,k}$ are well-defined will be deferred until the proof of \cref{lemma:well-defined-isomorphisms}.
Define the maps
\begin{equation}
    \label{equation:beta_gamma_maps}
    \begin{aligned}
        \beta_{n,k}&: \B{n}{k} \to \Bstar{n}{n-k} &&\text{ where } [(\rho_1, \ldots, \rho_{\binom{n}{k}})] \mapsto [([n] \setminus \rho_{\binom{n}{k}}, \ldots, [n] \setminus \rho_1)],\\
    \gamma_{n,k}&: \C{n}{k} \to \Cstar{n}{n-k} &&\text{ where }I \mapsto \{[n] \setminus X : X \in I\}.\\
    \end{aligned}
\end{equation}
For a commutation class $[\rho] \in \B{n}{k}$, the notation $[\rho]^*$ is used to denote $\beta_{n,k}([\rho]) \in \Bstar{n}{n-k}$. Similarly, for a consistent set $I \in \C{n}{k}$, the notation $I^*$ is used to denote $\gamma_{n,k}(I) \in \Cstar{n}{n-k}$.

\begin{example}
    Let $\rho \in \A{5}{2}$ be the admissible order in \eqref{equation:example-admissible-order}, with reversal set in \eqref{equation:example-reversal-set}. The commutation class $\beta_{5,2}([\rho])$ is
    \begin{equation}
        \label{equation:example-coadmissible-order}
        \beta_{5,2}([\rho]) = [(345, 125, 235, 124, 234, 245, 123, 134, 135, 145)] \in \Bstar{5}{3},
    \end{equation}
    and the coreversal set $\coreviso{5}{3}(\beta_{5,2}([\rho]))$ is
    \begin{equation}
        \coreviso{5}{3}(\beta_{5,2}([\rho])) = \{45, 35, 34, 23, 12\}.
    \end{equation}
    One can check that the set $\gamma_{5,3}(\Rev([\rho]))$ is coconsistent and equal to $\coreviso{5}{3}(\beta_{5,2}([\rho]))$.
\end{example}

\begin{example}
    \label{example:b-4-2-and-dual}
    The map $\beta_{4,2}$ is an isomorphism between $\B{4}{2}$ and $\Bstar{4}{2}$. The Hasse diagram of $\B{4}{2}$ is on the left in \cref{fig:b-4-2} and the Hasse diagram of $\Bstar{4}{2}$ is on the left in \cref{figure:dual-higher-bruhat-orders}. The map $\gamma_{4,3}$ is an isomorphism between $\C{4}{3}$ and $\Cstar{4}{1}$. The Hasse diagram of $\C{4}{3}$ is on the right in \cref{fig:b-4-2}, and the Hasse diagram of $\C{4}{1}$ is on the right in \cref{figure:dual-higher-bruhat-orders}. Note that although the elements of $\B{4}{2}$ and $\Bstar{4}{2}$ are both commutation classes of total orders on $\binom{[4]}{2}$, some total orders such as $(23, 13, 24, 14, 12, 34)$ are admissible but not coadmissible, and some total orders such as $(12, 34, 23, 13, 24, 14)$ are coadmissible but not admissible.
    
    \begin{figure}[!ht]
        \centering
        \scalebox{1.0}{
            \begin{tikzpicture}[every node/.style={font=\small}]
                    
            % Right hexagonal structure
            \node (b1) at (0, 2.5) {$[(34, 24, 23, 14, 13, 12)]$};
            \node (b2) at (-2,1.25) {$[(34, 24, 23, 12, 13, 14)]$};
            \node (b3) at (2,1.25) {$[(14, 24, 34, 23, 13, 12)]$};
            \node (b4) at (-2,0) {$[(12,34,23,13,24,14)]$};
            \node (b5) at (2,0) {$[(14, 13, 24, 23, 12, 34)]$};
            \node (b6) at (-2,-1.25) {$[(12, 13, 23, 34, 24, 14)]$};
            \node (b7) at (2,-1.25) {$[(14, 13, 12, 23, 24, 34)]$};
            \node (b8) at (0,-2.5) {$[(12, 13, 14, 23, 24, 34)]$};
            
            % Draw lines for the right hexagon
            \draw (b1) -- (b2);
            \draw (b1) -- (b3);
            \draw (b2) -- (b4);
            \draw (b3) -- (b5);
            \draw (b4) -- (b6);
            \draw (b5) -- (b7);
            \draw (b6) -- (b8);
            \draw (b7) -- (b8);
    
            \end{tikzpicture}
        }
        \scalebox{1.0}{
            \begin{tikzpicture}[every node/.style={font=\small}]
            % Left hexagonal structure
            \node (a1) at (0, 2.5) {$\{\{1\},\{2\},\{3\},\{4\}\}$};
            \node (a2) at (-2,1.25) {$\{\{2\},\{3\},\{4\}\}$};
            \node (a3) at (2,1.25) {$\{\{1\},\{2\},\{3\}\}$};
            \node (a4) at (-2,0) {$\{\{3\},\{4\}\}$};
            \node (a5) at (2,0) {$\{\{1\},\{2\}\}$};
            \node (a6) at (-2,-1.25) {$\{\{4\}\}$};
            \node (a7) at (2,-1.25) {$\{\{1\}\}$};
            \node (a8) at (0,-2.5) {$\varnothing$};
            
            % Draw lines for the left hexagon
            \draw (a1) -- (a2);
            \draw (a1) -- (a3);
            \draw (a2) -- (a4);
            \draw (a3) -- (a5);
            \draw (a4) -- (a6);
            \draw (a5) -- (a7);
            \draw (a6) -- (a8);
            \draw (a7) -- (a8);
            \end{tikzpicture}
        }
        \caption{The partial orders $\Bstar{4}{2}$  on the left and $\Cstar{4}{1}$ on the right.}
        \label{figure:dual-higher-bruhat-orders}
    \end{figure}
\end{example}

\begin{lemma}
    \label{lemma:well-defined-isomorphisms}
    The maps $\coreviso{n}{k}$, $\beta_{n,k}$, and $\gamma_{n,k}$ are well-defined.
\end{lemma}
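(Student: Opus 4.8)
The plan is to reduce all three verifications to the combinatorics of complementation $X \mapsto X^{c} := [n] \setminus X$. First I would set up a short ``complementation dictionary.'' (a) By the computation already appearing in the proof of \cref{lemma:commute-cocommute-coincide}, $X, Y \in \binom{[n]}{k}$ commute if and only if $X^{c}, Y^{c} \in \binom{[n]}{n-k}$ commute, since $X^{c} \setminus Y^{c} = Y \setminus X$ and $Y^{c} \setminus X^{c} = X \setminus Y$. (b) For $X \in \binom{[n]}{k+1}$ the map $Z \mapsto Z^{c}$ is a bijection $P(X) \to P^{*}(X^{c})$, because $\{Z^{c} : Z \in P(X)\} = \{X^{c} \cup \{i\} : i \in X\} = P^{*}(X^{c})$; symmetrically $Z \mapsto Z^{c}$ is a bijection $P^{*}(Y) \to P(Y^{c})$ for $Y \in \binom{[n]}{k-1}$. (c) Under these bijections complementation interchanges the lex and antilex orders on the packet/copacket in question --- this is immediate from the description in \cref{subsection:higher-bruhat-order-intro}, since adjoining the elements of $X$ in increasing order lists $P^{*}(X^{c})$ in lex order while deleting them from $X$ in increasing order lists $P(X)$ in antilex order --- and hence it also interchanges prefixes with suffixes.

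Granting (a)--(c), I would dispatch $\gamma_{n,k}$ first: for a consistent $I \subseteq \binom{[n]}{k}$ and any $Y \in \binom{[n]}{n-k-1}$, complementation carries $P^{*}(Y) \cap I^{*}$ bijectively onto $P(Y^{c}) \cap I$, which by consistency is a prefix or suffix of $P(Y^{c})$ in lex order, so $P^{*}(Y) \cap I^{*}$ is a suffix or prefix of $P^{*}(Y)$ in lex order, and $I^{*}$ is coconsistent. For $\beta_{n,k}$, writing $\widehat{\rho} = ([n]\setminus\rho_{\binom{n}{k}}, \dots, [n]\setminus\rho_{1})$ for $\rho = (\rho_{1}, \dots, \rho_{\binom{n}{k}})$, the restriction $\widehat{\rho}|_{P^{*}(Y)}$ for $Y \in \binom{[n]}{n-k-1}$ is obtained from $\rho|_{P(Y^{c})}$ by reversing and then complementing, so by (c) it is lex or antilex on $P^{*}(Y)$ whenever $\rho$ is admissible; hence $\widehat{\rho}$ is coadmissible. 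Independence of the chosen representative is (a): a commutation of $\rho$ transposes two adjacent commuting entries, and this becomes a transposition of two adjacent commuting entries of $\widehat{\rho}$. Reading this in both directions shows that $\rho \mapsto \widehat{\rho}$ is an involution exchanging admissible orders on $\binom{[n]}{k}$ with coadmissible orders on $\binom{[n]}{n-k}$, compatibly with commutations; in particular every coadmissible order on $\binom{[n]}{k}$ equals $\widehat{\sigma}$ for a unique admissible order $\sigma$ on $\binom{[n]}{n-k}$.

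The remaining map $\coreviso{n}{k}$ is where the only nontrivial content lies, and it is the step I expect to be the real obstacle. Commutation-invariance of $\coreviso{n}{k}$ is routine: if adjacent entries $\rho_{i}, \rho_{i+1}$ of a coadmissible order commute then no copacket can contain both (otherwise $|\rho_{i} \setminus \rho_{i+1}| \le 1$), so swapping them does not alter $\rho|_{P^{*}(W)}$ for any $W \in \binom{[n]}{k-2}$, hence does not change the coreversal set. But coconsistency of $\coreviso{n}{k}(\rho)$ is not obvious; it essentially re-encodes a form of Ziegler's consistency result. Rather than reprove it, I would invoke the involution: writing the given coadmissible $\rho$ as $\widehat{\sigma}$ with $\sigma$ admissible on $\binom{[n]}{n-k}$ and tracking through (c) which copackets become antilex, one obtains
\[
  \coreviso{n}{k}(\widehat{\sigma}) = \{X^{c} : X \in \reviso{n}{n-k}(\sigma)\} = \gamma_{n,\,n-k+1}\bigl(\reviso{n}{n-k}(\sigma)\bigr),
\]
since $X^{c} \in \coreviso{n}{k}(\widehat{\sigma})$ precisely when $\widehat{\sigma}|_{P^{*}(X^{c})}$ is antilex, which by (c) happens precisely when $\sigma|_{P(X)}$ is antilex, i.e.\ when $X \in \reviso{n}{n-k}(\sigma)$. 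By \cref{theorem:higher-bruhat-order-isomorphism}, $\reviso{n}{n-k}(\sigma)$ is a consistent subset of $\binom{[n]}{n-k+1}$, and by the already-verified well-definedness of $\gamma$ its image under $\gamma_{n,\,n-k+1}$ is a coconsistent subset of $\binom{[n]}{k-1}$; hence $\coreviso{n}{k}(\rho)$ is coconsistent. In short, the plan pushes all the difficulty onto the displayed identity, whose only role is to transport Ziegler's theorem across complementation; everything else is bookkeeping that largely duplicates the proof of \cref{lemma:commute-cocommute-coincide}.
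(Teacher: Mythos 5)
Your proposal is correct apart from a small typo: in the commutation-invariance argument for $\coreviso{n}{k}$, the copackets relevant to a coadmissible order on $\binom{[n]}{k}$ are $P^{*}(W)$ for $W \in \binom{[n]}{k-1}$, not $W \in \binom{[n]}{k-2}$ (the latter appear only once you pass to the coreversal set itself).

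For $\beta_{n,k}$ and $\gamma_{n,k}$ your argument is essentially the paper's: both hinge on the observation that complementation carries packets to copackets and swaps lex with antilex (so prefixes go to suffixes), and that it preserves commutation. Where you diverge is the coconsistency of $\coreviso{n}{k}([\rho])$. The paper proves this directly: writing $[n] \setminus X = [y_1 < \cdots < y_{n-(k-2)}]$, it shows by hand that if $X^{r}, X^{t} \in \coreviso{n}{k}([\rho])$ with $r < s < t$ then the antilexness of $\rho|_{P^{*}(X^{r})}$ and $\rho|_{P^{*}(X^{t})}$ forces two elements of $P^{*}(X^{s})$ out of lex order, so $X^{s} \in \coreviso{n}{k}([\rho])$ too. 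You instead establish the identity $\coreviso{n}{k}(\widehat{\sigma}) = \gamma_{n,n-k+1}\bigl(\reviso{n}{n-k}(\sigma)\bigr)$ and invoke \cref{theorem:higher-bruhat-order-isomorphism} (Ziegler) together with the just-proved well-definedness of $\gamma$. This works, and the identity you prove is exactly the commutativity statement the paper defers to the proof of \cref{theorem:fundamental-duality}, so you are front-loading that computation. The tradeoff: the paper's direct check is elementary and self-contained (it uses only the definitions of copackets and coadmissibility), whereas your route is shorter and conceptually cleaner but imports Ziegler's nontrivial consistency theorem to settle what is, in the paper, a purely definitional verification. Both are sound; yours unifies the lemma with the subsequent diagram-chase at the cost of a heavier dependency.
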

\begin{proof}
    First, consider $\coreviso{n}{k}$. For a commutation class $[\rho] \in \Bstar{n}{k}$, one can readily check from \cref{definition:coadmissible-order} and \cref{definition:coreversal-set} that $\coreviso{n}{k}(\rho') = \coreviso{n}{k}(\rho)$ for all $\rho' \in [\rho]$. Thus, the coreversal set is well-defined on commutation classes. It remains to check that the image of $\coreviso{n}{k}$ lies in $\Cstar{n}{k-1}$. To see this, let $[\rho] \in \Bstar{n}{k}$, $X \in \binom{[n]}{k-2}$, and $[n] \setminus X = [y_1 < y_2 < \cdots < y_{n-(k-2)}]$. For integers $r, s, t \in [n-(k-2)]$ with $r < s < t$, one can write $X^r$, $X^s$ and $X^t$ as
    \begin{align*}
        X^r &= X \cup \{y_r\},\\
        X^s &= X \cup \{y_s\},\text{ and}\\
        X^t &= X \cup \{y_t\}.
    \end{align*}
    Now suppose $X^r, X^t \in \coreviso{n}{k}([\rho])$. Then the restriction of $\rho$ to $P^*(X^r)$ or $P^*(X^t)$ is the antilex order on each copacket respectively. Therefore, in $\rho$
    \begin{equation}
       X^s \cup \{y_t\} = X^t \cup \{y_s\} < X^t \cup \{y_r\} = X^r \cup \{y_t\} < X^r \cup \{y_s\} = X^s \cup \{y_r\}. 
    \end{equation}
    Since $X^s \cup \{y_t\} < X^s \cup \{y_r\}$ in $\rho$, it follows that the restriction of $\rho$ to $P^*(X^s)$ is the antilex order. Thus, if $X^r, X^t \in \coreviso{n}{k}([\rho])$ then $X^s \in \coreviso{n}{k}([\rho])$ also. A similar argument shows that if $X^r, X^t \not\in \coreviso{n}{k}([\rho])$, then $X^t \not\in \coreviso{n}{k}([\rho])$. Therefore, $\coreviso{n}{k}([\rho]) \subseteq \binom{[n]}{k-1}$ is coconsistent, so $\coreviso{n}{k}([\rho]) \in \Cstar{n}{k-1}$.

    Next, consider $\beta_{n,k}$. For a total order $\rho$ on $\binom{[n]}{k}$, the map sending $(\rho_1, \ldots, \rho_{\binom{n}{k}})$ to $([n] \setminus \rho_{\binom{n}{k}}, \ldots, [n] \setminus \rho_1)$ complements and reverses $\rho$. Thus, if $\rho, \rho' \in \A{n}{k}$ differ by a commutation move, then the total orders on $\binom{[n]}{n-k}$ obtained by complementing and reversing $\rho$ and $\rho'$ also differ by a commutation move. Therefore, the commutation class $\beta_{n,k}([\rho])$ as defined in \cref{equation:beta_gamma_maps} is independent of the choice of representative in $[\rho]$. Furthermore, if $\rho$ is admissible, then for any $X \in \binom{[n]}{k+1}$, the restriction $\rho|_{P(X)}$ is the lex (resp. antilex) order on $P(X)$ so complementation and reversal of the restriction yields the lex (resp. antilex) order on $P^*([n] \setminus X)$. Thus, if $\rho$ is an admissible order, then $([n] \setminus \rho_{\binom{n}{k}}, \ldots, [n] \setminus \rho_1)$ is a coadmissible order. Therefore, $\beta_{n,k}([\rho]) \in \Bstar{n}{n-k}$ is a commutation class of coadmissible orders. 

    Finally, consider $\gamma_{n,k}$. For $I \in \C{n}{k}$ and $X \in \binom{[n]}{k+1}$, the intersection $P(X) \cap I$ is a prefix or suffix of $P(X)$ in lex order. One can check that this implies that $P^*([n] \setminus X) \cap \gamma_{n,k}(I)$ is a prefix or suffix of $P^*([n] \setminus X)$ in lex order. Since the intersection of $\gamma_{n,k}(I)$ with any copacket $P^*([n] \setminus X)$ is a prefix or suffix, it follows that $\gamma_{n,k}(I) \in \Cstar{n}{n-k}$.
\end{proof}

\begin{proof}[Proof of \cref{theorem:fundamental-duality}]
    It suffices to prove that $\beta_{n,k}$ and $\gamma_{n,k}$ are poset isomorphisms and that $\coreviso{n}{n-k} = \gamma_{n,k+1} \circ \reviso{n}{k} \circ \beta_{n,k}^{-1}$. It then follows that the diagram commutes and that $\coreviso{n}{n-k}$ is also a poset isomorphism.
    
    Let $[\rho], [\sigma] \in \B{n}{k}$, and suppose $[\rho] \lessdot [\sigma]$, where $\sigma$ is obtained from $\rho$ by a packet flip of $P(X)$ from lex to antilex for some $X \in \binom{[n]}{k+1}$. The packet $P(X)$ bijects with the copacket $P^*([n] \setminus X)$ by taking complements 
    \begin{equation}
    \label{equation:packet-copacket-bijection}
    X_i \leftrightarrow [n] \setminus X_i = ([n] \setminus X)^i.
    \end{equation}
    Under \eqref{equation:packet-copacket-bijection}, the packet $P(X)$ in lex order bijects to $P^*([n] \setminus X)$ in antilex order. Since $\beta_{n,k}$ complements each set and then reverses the total order, $\beta_{n,k}([\sigma])$ is obtained from $\beta_{n,k}([\rho])$ by a copacket flip of $P^*([n] \setminus X)$ from lex to antilex. Thus, $\beta_{n,k}([\rho]) \lessdot \beta_{n,k}([\sigma])$ in $\Bstar{n}{n-k}$. The inverse map $\beta_{n,k}^{-1}$ is also defined by complementation and reversal, sending
    \begin{equation}
        [(\rho_1, \ldots, \rho_{\binom{n}{n-k}})] \mapsto [([n] \setminus \rho_{\binom{n}{n-k}}, \ldots, [n] \setminus \rho_1)]
    \end{equation}
    for $[\rho] = [(\rho_1, \ldots, \rho_{\binom{n}{n-k}})] \in \Bstar{n}{n-k}$. By a similar argument, $\beta_{n,k}^{-1}$ also preserves covering relations. Thus, $\beta_{n,k}$ is a poset isomorphism between $\B{n}{k}$ and $\Bstar{n}{n-k}$.

    Next, to show that $\gamma_{n,k}$ is a poset isomorphism, Let $I,J \in \C{n}{k}$ such that $I \lessdot J$. Since the partial order on $\C{n}{k}$ is single step inclusion, $J = I \cup \{X\}$ for some $X \in \binom{[n]}{k} \setminus I$. Then $\gamma_{n,k}(J) = \gamma_{n,k}(I) \cup \{[n] \setminus X\}$. The partial order on $\Cstar{n}{n-k}$ is also single step inclusion, so $\gamma_{n,k}(I) \lessdot \gamma_{n,k}(J)$ in $\Cstar{n}{n-k}$. The inverse map $\gamma_{n,k}^{-1}$ is defined by sending $I \in \Cstar{n}{n-k}$ to $\{[n] \setminus X : X \in I\}$ and also preserves covering relations by a similar argument. Thus, $\gamma_{n,k}$ is a poset isomorphism between $\C{n}{k}$ and $\Cstar{n}{n-k}$.

    Finally, to prove that $\coreviso{n}{n-k} = \gamma_{n,k+1} \circ \reviso{n}{k} \circ \beta_{n,k}^{-1}$, let $[\rho] \in \Bstar{n}{n-k}$ and $X \in \binom{[n]}{n-k-1}$. Then the following chain of logical equivalences holds:
    \begin{align*}
        X \in \coreviso{n}{n-k}([\rho]) &\iff \text{$\rho|_{P^*(X)}$ is antilex} & \text{by definition of $\coreviso{n}{n-k}$}\\
        &\iff \text{$\beta_{n,k}^{-1}([\rho])|_{P([n] \setminus X)}$ is antilex} & \text{by \eqref{equation:packet-copacket-bijection}}\\
        &\iff [n] \setminus X \in (\reviso{n}{k} \circ \beta_{n,k}^{-1})([\rho]) & \text{by definition of $\reviso{n}{k}$}\\
        &\iff X \in (\gamma_{n,k+1} \circ \reviso{n}{k} \circ \beta_{n,k}^{-1})([\rho]) & \text{by definition of $\gamma_{n,k}$.}
    \end{align*}    
    Therefore, $\coreviso{n}{n-k} = \gamma_{n,k+1} \circ \reviso{n}{k} \circ \beta_{n,k}^{-1}$ and the diagram commutes.
\end{proof}

\section{Deletion and Contraction}
Throughout this section, unless otherwise specified, let $k$ and $n$ be fixed integers such that $1 \le k \le n$ and $n \ge 2$. In this section, deletion and contraction are defined on total orders and subsets of $\binom{[n]}{k}$. For subsets of $\binom{[n]}{k}$, the definitions of deletion and contraction agrees with the standard definitions in matroid theory. A new poset $\P_I$ is associated to each consistent set $I \in \C{n}{k}$ and used in \cref{theorem:contraction-deletion-equation} to characterize the possible deletions and contractions of consistent sets.

\begin{definition}
For a set $I \subseteq \binom{[n]}{k}$, the \defn{deletion} of $I$ is
\begin{equation}
    \boldsymbol{I \delete n} = I \cap \binom{[n-1]}{k}.
\end{equation}
The \defn{contraction} of $I$ is
\begin{equation}
    \boldsymbol{I \contract n} = \{X \delete \{n\} : \text{$X \in I$ and $n \in X$}\}.
\end{equation}
\end{definition}

\begin{definition}
For a total order $\rho$ on $\binom{[n]}{k}$, the \defn{deletion} $\boldsymbol{\rho \delete n}$ is the total order obtained by restricting $\rho$ to $\binom{[n-1]}{k}$. The \defn{contraction} $\boldsymbol{\rho \contract n}$ is the total order on $\binom{[n-1]}{k-1}$, where $X < Y$ in $\rho \contract n$ if and only if $X \cup \{n\} < Y \cup \{n\}$ in $\rho$.
\end{definition}

\begin{lemma}
    \label{lemma:closed-under-deletion-contraction}
    The following statements hold.
    \begin{enumerate}[label=(\arabic*)]
        \item For all $\rho \in \A{n}{k}$, $\rho \delete n \in \A{n-1}{k}$ and $\rho \contract n \in \A{n-1}{k-1}$. Furthermore, $\Rev(\rho \delete n) = \Rev(\rho) \delete n$ and $\Rev(\rho \contract n) = \Rev(\rho) \contract n$.
        \item For all $\rho \in \Astar{n}{k}$, $\rho \delete n \in \Astar{n-1}{k}$ and $\rho \contract n \in \Astar{n-1}{k-1}$. Furthermore, $\Corev(\rho \delete n) = \Corev(\rho) \delete n$ and $\Corev(\rho \contract n) = \Corev(\rho) \contract n$.
        \item For all $I \in \C{n}{k}$, $I \delete n \in \C{n-1}{k}$ and $I \contract n \in \C{n-1}{k-1}$.
        \item For all $I \in \Cstar{n}{k}$, $I \delete n \in \Cstar{n-1}{k}$ and $I \contract n \in \Cstar{n-1}{k-1}$.
    \end{enumerate}
\end{lemma}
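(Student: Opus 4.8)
The plan is to treat deletion and contraction separately, and in each case to first establish the structural membership claim and then read off the corresponding (co)reversal-set identity. The engine throughout is the local condition defining admissibility, coadmissibility, consistency and coconsistency --- that restriction to a packet or copacket is the lex or antilex order, equivalently a prefix or a suffix --- so it suffices to understand how the (co)packets over $[n-1]$ sit inside the (co)packets over $[n]$.

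First I would handle deletion, where the (co)packets change only mildly. A packet $P(X)$ with $X\in\binom{[n-1]}{k+1}$ already lies inside $\binom{[n-1]}{k}$, so $\rho$ and $\rho\delete n$ (resp.\ $I$ and $I\delete n$) restrict to it identically; this gives $\rho\delete n\in\A{n-1}{k}$ and $I\delete n\in\C{n-1}{k}$ at once, and also $\Rev(\rho\delete n)=\Rev(\rho)\cap\binom{[n-1]}{k+1}=\Rev(\rho)\delete n$. Dually, for $X\in\binom{[n-1]}{k-1}$ the copacket $P^*_{n-1}(X)$ is exactly $P^*_n(X)$ with its lexicographically largest element $X\cup\{n\}$ removed; striking the lex-maximal element of a sequence in lex (resp.\ antilex) order leaves a sequence in lex (resp.\ antilex) order, and striking it from a prefix (resp.\ suffix) of $P^*_n(X)$ leaves a prefix (resp.\ suffix) of $P^*_{n-1}(X)$. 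Hence $\rho\delete n\in\Astar{n-1}{k}$, $I\delete n\in\Cstar{n-1}{k}$, and $\Corev(\rho\delete n)=\Corev(\rho)\delete n$, with a small amount of care needed only when $P^*_{n-1}(X)$ is a singleton and the lex/antilex distinction is vacuous.

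Next I would handle contraction, where the key device is the ``complement with $n$'' bijection. For $Y\in\binom{[n-1]}{k}$ the map $Z\mapsto Z\cup\{n\}$ identifies $P(Y)$ with $P(Y\cup\{n\})\setminus\{Y\}$; writing $Y=[y_1,\dots,y_k]$ one checks that the lex order on $P(Y\cup\{n\})$ is $\bigl(Y, Y_k\cup\{n\}, \dots, Y_1\cup\{n\}\bigr)$, so striking the lex-first term $Y$ and transporting the order back along the bijection recovers precisely the lex order $(Y_k,\dots,Y_1)$ on $P(Y)$, and symmetrically for antilex, and (at the level of sets) for prefixes and suffixes. Since by definition $X<Y$ in $\rho\contract n$ exactly when $X\cup\{n\}<Y\cup\{n\}$ in $\rho$, this gives $\rho\contract n\in\A{n-1}{k-1}$ together with $Y\in\Rev(\rho\contract n)$ iff $Y\cup\{n\}\in\Rev(\rho)$, i.e.\ $\Rev(\rho\contract n)=\Rev(\rho)\contract n$, and likewise $I\contract n\in\C{n-1}{k-1}$. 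The dual statements have the same shape: for $Y\in\binom{[n-1]}{k-2}$ the map $Z\mapsto Z\cup\{n\}$ identifies $P^*_{n-1}(Y)$ with $P^*_n(Y\cup\{n\})$ and is order-preserving for the lex orders term by term, so $\rho\contract n\in\Astar{n-1}{k-1}$, $I\contract n\in\Cstar{n-1}{k-1}$, and $\Corev(\rho\contract n)=\Corev(\rho)\contract n$.

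The computations are elementary bookkeeping, so I expect the main point to watch to be the reversed indexing built into the lex order on a packet: the lex order on $P(W)$ lists the omitted elements of $W$ in decreasing order, and it is precisely this that makes the omitted element $Y$ land at the extreme end of $P(Y\cup\{n\})$ so that the bijection behaves cleanly. An off-by-one here would corrupt both the admissibility and the reversal-set claims. The only other nuisance is the handful of boundary cases --- singleton (co)packets, where lex and antilex coincide --- which I would dispatch by direct inspection. An alternative route to parts (2) and (4): once \cref{theorem:fundamental-duality} is in hand, one checks that $\beta_{n,k}$ and $\gamma_{n,k}$ interchange deletion with contraction, and this reduces the coadmissible and coconsistent statements to the admissible and consistent ones already established.
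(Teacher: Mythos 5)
Your proposal is correct and takes essentially the same approach as the paper: a direct local verification, (co)packet by (co)packet, using the observation that deletion simply restricts $\rho$ to $\binom{[n-1]}{k}$ (so packets $P(X)$ with $X\in\binom{[n-1]}{k+1}$ are untouched, and copackets $P^*_{n-1}(X)$ are obtained from $P^*_n(X)$ by removing the lex-maximal element $X\cup\{n\}$), while contraction is mediated by the order-preserving map $Z\mapsto Z\cup\{n\}$ identifying $P(Y)$ with $P(Y\cup\{n\})\setminus\{Y\}$ and $P^*_{n-1}(Y)$ with $P^*_n(Y\cup\{n\})$. The paper organizes the argument as four parts rather than by deletion versus contraction, but the computations are the same. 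The alternative route you sketch at the end --- proving (1), (3) directly and then transferring (2), (4) via $\beta_{n,k}$, $\gamma_{n,k}$ --- is not what the paper does; the paper proves all four directly and then uses parts (1) and (2) in the subsequent proof of \cref{lemma:contraction-deletion-are-dual}, so if you pursued the alternative you would need to establish the deletion--contraction duality for $\beta$ and $\gamma$ by hand from the definitions (which is straightforward) rather than citing that lemma, to avoid circularity.
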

\begin{proof}
    \textbf{Proof of (1):} Let $\rho \in \A{n}{k}$ and $X \in \binom{[n-1]}{k+1}$. Then $(\rho \delete n)|_{P(X)} = \rho|_{P(X)}$. Since $\rho$ is admissible, $\rho|_{P(X)}$ is the lex or antilex order on $P(X)$, and hence so is $(\rho \delete n)|_{P(X)}$. Thus, $\rho \delete n \in \A{n-1}{k}$ and $\Rev(\rho \delete n) = \Rev(\rho) \delete n$.

    Next, let $Y \in \binom{[n-1]}{k}$ and $Z = Y \cup \{n\}$. If $\rho|_{P(Z)}$ is the lex order $(Z_{k+1}, \ldots, Z_1)$ on $P(Z)$, then $(\rho\contract n)|_{P(Y)}$ is the order $(Z_{k+1,k}, Z_{k+1,k-1}, \ldots, Z_{k+1,1})$. Since $Z_{k+1,i} = Y_i$ for $i \in [k]$, the restriction $(\rho\contract n)|_{P(Y)}$ is the lex order on $P(Y)$. Similarly, if $\rho|_{P(Z)}$ is the antilex order on $P(Z)$, then $(\rho\contract n)|_{P(Y)}$. Thus, $\rho / n \in \A{n-1}{k-1}$ and $\Rev(\rho / n) = \Rev(\rho) / n$.

    \textbf{Proof of (2):} Let $\rho \in \Astar{n}{k}$ and $X \in \binom{[n-1]}{k-1}$. If $\rho|_{P_n^*(X)}$ is the lex order $(X^1, \ldots, X^{n-k+1})$ on $P_n^*(X)$, then $X^{n-k+1} = X \cup \{n\}$ and $(\rho \delete n)|_{P_{n-1}^*(X)}$ is the lex order $(X^1, \ldots, X^{n-k})$ on $P_{n-1}^*(X)$. Similarly, if $\rho|_{P_n^*(X)}$ is the antilex order on $P_n^*(X)$, then $(\rho \delete n)|_{P_{n-1}^*(X)}$ is the antilex order on $P_{n-1}^*(X)$. Thus, $\rho \setminus n \in \Astar{n-1}{k}$ and $\Corev(\rho \setminus n) = \Corev(\rho) \setminus n$.

    Next, let $Y \in \binom{[n-1]}{k-2}$ and $Z = Y \cup \{n\}$. If $\rho|_{P_n^*(Z)}$ is the lex order $(Z^1, \ldots, Z^{n-k+1})$ on $P_n^*(Z)$, then $(\rho \contract n)|_{P_{n-1}^*(Y)}$ is the lex order $(Y^1, \ldots, Y^{n-k+1})$ on $P_{n-1}^*(Y)$. Similarly, if $\rho|_{P_n^*(Z)}$ is the antilex order on $P_n^*(Z)$, then $(\rho \contract n)|_{P_{n-1}^*(Y)}$ is the antilex order on $P_{n-1}^*(Y)$. Thus, $\rho / n \in \Astar{n-1}{k-1}$ and $\Corev(\rho / n) = \Corev(\rho) / n$.

    \textbf{Proof of (3):} Let $I \in \C{n}{k}$ and $X \in \binom{[n-1]}{k+1}$. Then $P(X) \cap (I \delete n) = P(X) \cap I$. Since $I$ is consistent, $P(X) \cap I$ is either a prefix or suffix of $P(X)$ in lex order and hence so is $P(X) \cap (I \delete n)$. Thus, $I \setminus n \in \C{n-1}{k}$.

    Next, let $Y \in \binom{[n-1]}{k}$ and $Z = X \cup \{n\}$. If $P(Z) \cap I$ is a prefix $(Z_{k+1}, \ldots, Z_{i})$, then $P(Y) \cap (I \contract n)$ is a prefix $(Z_{k+1, k}, \ldots, Z_{k+1,i}) = (Y_k, \ldots, Y_i)$. Similarly, if $P(Z) \cap I$ is a suffix of $P(Z)$ in lex order, then $P(Y) \cap (I \contract n)$ is a suffix of $P(Z)$ in lex order. Thus, $I / n \in \C{n-1}{k-1}$.

    \textbf{Proof of (4):} Let $I \in \Cstar{n}{k}$ and $X \in \binom{[n-1]}{k-1}$. If $P_n^*(X) \cap I$ is a prefix $(X^1, \ldots, X^i)$ of $P_n^*(X)$, then $P_{n-1}^*(X) \cap (I \delete n)$ is the prefix $(X^1, \ldots, X^{\min(i, n-k)})$ of $P_{n-1}^*(X)$. Similarly, if $P_n^*(X) \cap I$ is a suffix $(X^{n-k+1}, \ldots, X^i)$ of $P_n^*(X)$, then $P_{n-1}^*(X) \cap (I \delete n)$ is a suffix $(X^{n-k}, \ldots, X^i)$ of $P_{n-1}^*(X)$. Thus, $I \delete n \in \Cstar{n-1}{k}$.

    Next, let $Y \in \binom{[n-1]}{k-2}$ and $Z = Y \cup \{n\}$. If $P_n^*(Z) \cap I$ is a prefix $(Z^1, \ldots, Z^i)$ of $P_n^*(Z)$, then $P_{n-1}^*(Y) \cap (I \contract n)$ is the prefix $(Y^1, \ldots, Y^i)$ of $P_{n-1}^*(Y)$. Similarly, if $P_n^*(Z) \cap I$ is a suffix of $P_n^*(Z)$, then $P_{n-1}^*(Y) \cap (I\contract n)$ is a suffix of $P_{n-1}^*(Y)$. Thus, $I \contract n \in \Cstar{n-1}{k-1}$.
\end{proof}

A consequence of \cref{lemma:closed-under-deletion-contraction} is that the deletion and contraction of (co)commutation classes of (co)admissible orders are well-defined. Thus, denote the deletion of a $[\rho] \in \B{n}{k}$ by $[\rho] \delete n$, the contraction by $[\rho] \contract n$, and similarly for $[\sigma] \in \Bstar{n}{k}$.

\begin{lemma}
    \label{lemma:contraction-deletion-are-dual}
    The following equations hold, for $[\rho] \in \B{n}{k}$ and $I \in \C{n}{k}$.
    \begin{enumerate}[label=(\arabic*)]
        \item $\gamma_{n-1,k-1}(I \contract n) = \gamma_{n,k}(I) \delete n$,
        \item $\gamma_{n-1,k}(I \delete n) = \gamma_{n,k}(I) \contract n$,
        \item $\beta_{n-1,k}([\rho] \delete n) = \beta_{n,k}([\rho]) \contract n$, and
        \item $\beta_{n-1,k-1}([\rho] \contract n) = \beta_{n,k}([\rho]) \delete n$.
    \end{enumerate}
\end{lemma}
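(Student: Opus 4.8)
The plan is to prove all four identities by directly unwinding the definitions of $\beta_{n,k}$, $\gamma_{n,k}$, deletion, and contraction. The single combinatorial fact doing all the work is that complementation inside $[n]$ exchanges the conditions ``$n \in X$'' and ``$n \notin X$'', which are exactly the conditions that select the sets relevant to contraction and to deletion. Concretely, for any $X \subseteq [n]$,
\[
[n] \setminus X = [n-1] \setminus (X \delete \{n\}) \quad \text{when } n \in X, \qquad ([n] \setminus X) \delete \{n\} = [n-1] \setminus X \quad \text{when } n \notin X .
\]
Everything reduces to substituting these two identities into explicit descriptions of the two sides.

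For (1) and (2) I would just compute both sides as explicit set families. For (1): $\gamma_{n-1,k-1}(I \contract n) = \{[n-1] \setminus (X \delete \{n\}) : X \in I,\ n \in X\}$, while $\gamma_{n,k}(I) \delete n$ consists of those $[n] \setminus X$ with $X \in I$ that avoid $n$, i.e. those with $n \in X$; the first displayed identity matches them termwise. For (2): $\gamma_{n-1,k}(I \delete n) = \{[n-1] \setminus X : X \in I,\ n \notin X\}$, while $\gamma_{n,k}(I) \contract n$ keeps the sets $[n]\setminus X$ that contain $n$ (i.e. $X$ with $n \notin X$) and deletes $n$; the second displayed identity matches them.

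For (3) and (4) the same bookkeeping works, with the extra bit of tracking the linear order. Fix a representative $\rho = (\rho_1, \dots, \rho_{\binom{n}{k}})$ of $[\rho]$, so $\beta_{n,k}(\rho) = ([n] \setminus \rho_{\binom{n}{k}}, \dots, [n] \setminus \rho_1)$. For (3): $\beta_{n-1,k}(\rho \delete n)$ deletes from $\rho$ every $\rho_i$ with $n \in \rho_i$, complements the survivors inside $[n-1]$, and reverses, yielding the sequence of sets $[n-1]\setminus\rho_i$ over the indices $i$ with $n \notin \rho_i$, listed in decreasing order of $i$; on the other hand $\beta_{n,k}(\rho) \contract n$ retains from $\beta_{n,k}(\rho)$ exactly the terms $[n]\setminus\rho_i$ that contain $n$ (again the indices $i$ with $n \notin \rho_i$), strips the $n$, and keeps the order inherited from $\beta_{n,k}(\rho)$, which is also decreasing in $i$, so the two total orders coincide by $([n]\setminus\rho_i)\delete\{n\} = [n-1]\setminus\rho_i$. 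Identity (4) is the mirror image, retaining instead the indices with $n \in \rho_i$ and invoking $[n]\setminus\rho_i = [n-1]\setminus(\rho_i\delete\{n\})$. Since deletion, contraction, $\beta_{n,k}$, and $\gamma_{n,k}$ are well-defined on the relevant posets and quotients by \cref{lemma:closed-under-deletion-contraction} and \cref{lemma:well-defined-isomorphisms}, these equalities of representatives descend to the corresponding dual posets.

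I do not expect a genuine obstacle: the whole content is the complementation bookkeeping, and the only thing to watch is not to conflate ``restricting to sets contained in $[n-1]$'' (deletion) with ``selecting sets containing $n$'' (contraction) when passing through a complement — these two trade places, which is precisely why deletion and contraction swap roles across (1)--(4). An alternative, more structural route would be to deduce (1) and (2) from (3) and (4) using the isomorphism of \cref{theorem:higher-bruhat-order-isomorphism}, parts (1)--(2) of \cref{lemma:closed-under-deletion-contraction}, and the commuting square of \cref{theorem:fundamental-duality}, but the direct computation is short enough that it is cleanest to carry it out for all four statements.
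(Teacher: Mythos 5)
Your proof is correct, and it takes a genuinely different route for parts (3) and (4). For (1) and (2) you and the paper agree in spirit: the paper dismisses these as ``identical to the standard proofs of the duality of contraction and deletion in matroids,'' and your explicit set-level computation via the complementation identities is exactly what that standard proof is. The divergence is in (3) and (4). The paper does \emph{not} track total orders directly; instead it reduces (3) and (4) to (1) and (2) by passing through the coreversal isomorphism. Concretely, it observes that since $\coreviso{n}{n-k}$ is injective, it suffices to show
$\Corev(\beta_{n-1,k}([\rho]\delete n)) = \Corev(\beta_{n,k}([\rho])\contract n)$,
and then chains together the commutative diagram of \cref{theorem:fundamental-duality}, part (1) of \cref{lemma:closed-under-deletion-contraction} (the compatibility of $\Rev$ with deletion), part (2) of the lemma itself applied to $I = \Rev([\rho])$, and part (2) of \cref{lemma:closed-under-deletion-contraction} again. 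Your approach instead works with a chosen representative $\rho$ and verifies, term by term, that both sides of (3) and (4) produce the same total order on the same index set in the same (reversed) order, using only the elementary identities $[n]\setminus\rho_i = [n-1]\setminus(\rho_i\delete\{n\})$ when $n\in\rho_i$ and $([n]\setminus\rho_i)\delete\{n\} = [n-1]\setminus\rho_i$ when $n\notin\rho_i$. Your version is more self-contained and elementary (it needs only well-definedness from \cref{lemma:closed-under-deletion-contraction} and \cref{lemma:well-defined-isomorphisms}, not the full force of \cref{theorem:fundamental-duality} or \cref{theorem:higher-bruhat-order-isomorphism}); the paper's version is shorter given the machinery already in hand and makes the logical dependence of (3), (4) on (1), (2) transparent. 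Your closing remark about an alternative structural route is the right idea, but note the paper's direction of deduction is the reverse of what you sketch: it derives (3), (4) \emph{from} (1), (2), not the other way around.
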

\begin{proof}
The proofs of (1) and (2) identical to the standard proofs of the duality of contraction and deletion in matroids. A proof of (3) is given, and the proof of (4) is similar.

\textbf{Proof of (3):} By \cref{theorem:fundamental-duality}, it suffices to show that $\Corev(\beta_{n-1,k}([\rho] \setminus n)) = \Corev(\beta_{n,k}([\rho]) / n)$. The two coreversal sets are equal according to the following chain of equalities:
\begin{align*}
    \Corev(\beta_{n-1,k}([\rho] \setminus n)) &= \gamma_{n-1,k+1}(\Rev([\rho] \setminus n)) &\text{by \cref{theorem:fundamental-duality}}\\
    &= \gamma_{n-1,k+1}(\Rev([\rho]) \setminus n) &\text{by (1) of \cref{lemma:closed-under-deletion-contraction}}\\
    &= \gamma_{n,k+1}(\Rev([\rho]))/n &\text{by (2)}\\
    &= \Corev(\beta_{n,k}([\rho])) / n &\text{by \cref{theorem:fundamental-duality}}\\
    &= \Corev(\beta_{n,k}([\rho]) / n) & \text{by (2) of \cref{lemma:closed-under-deletion-contraction}.}
\end{align*}
\end{proof}

\begin{definition}
    \label{definition:I-consistent-poset}
    For $I \in \C{n}{k}$, the \defn{consistent poset} $\P_I$ is the poset on $\binom{[n]}{k-1}$ whose partial order $\prec_I$ is the transitive closure of the relations
    \begin{enumerate}[label=(\arabic*)]
        \item $X_i \prec_I X_{i+1}$ for all $X \in I$ and $1 \le i < k$.
        \item $X_{i+1} \prec_I X_i$ for all $X \in \binom{[n]}{k} \setminus I$ and $1 \le i < k$.
    \end{enumerate}
\end{definition}

\begin{remark}
    For $I \in \Cstar{n}{k}$, one can define an analogous \defn{dual consistent poset} $\P^*_I$. In the dual consistent poset, relation (1) is replaced by $X^{i+1} \prec_I X^i$ for all $X \in I$ and $1 \le i < n-k$ and relation (2) is replaced by $X^i \prec_I X^{i+1}$ for all $X \in \binom{[n]}{k} \setminus I$ and $1 \le i < n-k$. \cref{lemma:upper-order-ideal-is-consistent} and \cref{theorem:contraction-deletion-equation} below also hold for dual consistent posets.
\end{remark}

 The transitive closure of relations (1) and (2) in \cref{definition:I-consistent-poset} is acyclic because any admissible order $\rho \in \A{n}{k}$ satisfies the relations and is a linear extension of $\P_{\Rev(\rho)}$. In fact, the map $\reviso{n}{k}^{-1}: \C{n}{k+1} \to \B{n}{k}$ sends a consistent set $I \in \C{n}{k+1}$ to the commutation class consisting of all linear extensions of $\P_I$.

\begin{example}
    \label{example:consistent-poset}
    Let $I = \{123, 124\} \in \C{4}{3}$ be the consistent set from \cref{example:total-order-reversal-set}. Then the consistent poset $\P_I$ is the transitive closure of the antilex relations
    \begin{align*}
        23 \prec_I 13 \prec_I 12,\\
        24 \prec_I 14 \prec_I 12,
    \end{align*}
    and the lex relations
    \begin{align*}
        13 \prec_I 14 \prec_I 34,\\
        23 \prec_I 24 \prec_I 34.
    \end{align*}
    The Hasse diagram of $\P_I$ is shown in \cref{figure:consistent-poset-example}. One can check that the upper and lower order ideals of $\P_I$ are consistent sets in $\C{4}{2}$.
    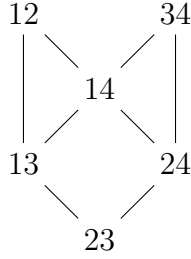
\begin{figure}[!ht]
        \centering
        \begin{tikzpicture}
            \node (a1) at (0, 0) {$23$};
            \node (a2) at (-1, 1) {$13$};
            \node (a3) at (1, 1) {$24$};
            \node (a4) at (0, 2) {$14$};
            \node (a5) at (-1, 3) {$12$};
            \node (a6) at (1, 3) {$34$};
            
            \draw (a1) -- (a2);
            \draw (a1) -- (a3);
            \draw (a2) -- (a4);
            \draw (a3) -- (a4);
            \draw (a2) -- (a5);
            \draw (a3) -- (a6);
            \draw (a4) -- (a5);
            \draw (a4) -- (a6);
        \end{tikzpicture}
        \caption{The Hasse diagram of the consistent poset $\P_{\{123, 124\}}$ in \cref{example:consistent-poset}.}
        \label{figure:consistent-poset-example}
    \end{figure}
\end{example}

\begin{lemma}
    \label{lemma:upper-order-ideal-is-consistent}
    Let $I \in \C{n}{k}$. If $J$ is an upper or lower order ideal of $\P_I$, then $J \in \C{n}{k-1}$.
\end{lemma}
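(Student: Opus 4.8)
The plan is to show that every packet $P(Y)$, for $Y \in \binom{[n]}{k}$, is a chain in $\P_I$, and then to observe that the intersection of a chain with an order ideal is a ``top segment'' or ``bottom segment'' of that chain, which translates directly into a prefix or suffix of $P(Y)$ in lex order.

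First I would record the key structural fact. Fix $Y \in \binom{[n]}{k}$. If $Y \in I$, then relation (1) of \cref{definition:I-consistent-poset} gives $Y_i \prec_I Y_{i+1}$ for all $1 \le i < k$, hence by transitivity $Y_1 \prec_I Y_2 \prec_I \cdots \prec_I Y_k$; if $Y \notin I$, relation (2) gives $Y_k \prec_I Y_{k-1} \prec_I \cdots \prec_I Y_1$. In either case the elements of $P(Y)$ form a chain in $\P_I$; these comparabilities are genuine and cycle-free since $\prec_I$ is a partial order, as noted after \cref{definition:I-consistent-poset}. Since the lex order on $P(Y)$ is $(Y_k, Y_{k-1}, \ldots, Y_1)$ and the antilex order is $(Y_1, Y_2, \ldots, Y_k)$, reading this chain from bottom to top lists $P(Y)$ in antilex order when $Y \in I$ and in lex order when $Y \notin I$.

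Next I would treat an upper order ideal $J$ of $\P_I$. For any $Y \in \binom{[n]}{k}$, the set $J \cap P(Y)$ is an upper order ideal of the chain $P(Y)$, so it consists of the top $m$ elements of that chain for some $m \ge 0$. When $Y \in I$, bottom-to-top of the chain is antilex order, so the top $m$ elements are the first $m$ elements in lex order, i.e.\ a prefix of $P(Y)$ in lex order; when $Y \notin I$, bottom-to-top is lex order, so the top $m$ elements are the last $m$ in lex order, i.e.\ a suffix. In both cases $J \cap P(Y)$ is a prefix or suffix of $P(Y)$ in lex order, so $J$ is consistent and $J \in \C{n}{k-1}$. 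The case of a lower order ideal is entirely symmetric: $J \cap P(Y)$ is then the bottom $m$ elements of the chain $P(Y)$, which is a suffix of $P(Y)$ in lex order when $Y \in I$ and a prefix when $Y \notin I$.

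There is no real obstacle beyond bookkeeping: the only thing requiring care is the indexing convention, namely that $P(Y)$ in lex order has \emph{decreasing} omitted-index labels $(Y_k, \ldots, Y_1)$, and keeping straight the matching of ``top/bottom of the chain'' with ``prefix/suffix in lex order'' across the four sub-cases. The remark after \cref{definition:I-consistent-poset} records that the analogous statement for $\P^*_I$ also holds, and the same argument applies verbatim with packets replaced by copackets and relations (1), (2) replaced by their dual versions.
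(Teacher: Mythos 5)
Your proposal is correct and takes essentially the same approach as the paper's proof: fix a packet $P(Y)$, observe that relations (1) and (2) force it to be a chain in $\P_I$ (in antilex order bottom-to-top when $Y \in I$, lex order when $Y \notin I$), and then note that intersecting a chain with an order ideal yields a top or bottom segment, which translates to a prefix or suffix of $P(Y)$ in lex order. The paper states this more tersely; you just spell out the four sub-cases of the prefix/suffix bookkeeping, which is the same reasoning.
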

\begin{proof}
    Let $X \in \binom{[n]}{k}$. If $X \in I$, then (1) in \cref{definition:I-consistent-poset} implies that $X_1 \prec_I \cdots \prec_I X_k$. Otherwise, $X \in \binom{[n]}{k} \setminus I$, so (2) in \cref{definition:I-consistent-poset} implies that $X_k \prec_I \cdots \prec_I X_1$. Thus, the ordered sets $X_1, \ldots, X_{k}$ form a chain in either lex or antilex order in $\P_I$. Since $J$ is an upper or lower order ideal of $\P_I$, the intersection $J \cap P(X)$ is either a prefix or suffix of $P(X)$ in lex order. Therefore, $J$ is consistent.
\end{proof}

\begin{lemma}
    \label{theorem:contraction-deletion-equation}
    The map from $\C{n}{k}$ to $\C{n-1}{k} \times \C{n-1}{k-1}$ that sends $I$ to $(I \delete n, I\contract n)$ is injective, and its image is the set 
    \begin{equation*}
        \{(S,T) : \text{$T$ is an upper order ideal of $\P_S$}\}.
    \end{equation*}
\end{lemma}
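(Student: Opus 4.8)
The plan is to first write down an explicit inverse on the image, reducing the whole statement to a single ``if and only if'' about consistency, and then to prove that equivalence by a short packet-by-packet analysis.

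\textbf{Step 1 (injectivity and reduction).} For subsets $S \subseteq \binom{[n-1]}{k}$ and $T \subseteq \binom{[n-1]}{k-1}$, set $\phi(S,T) = S \cup \{Y \cup \{n\} : Y \in T\} \subseteq \binom{[n]}{k}$. A one-line set computation gives $\phi(I \delete n, I \contract n) = I$ for every $I \subseteq \binom{[n]}{k}$ (the members of $I$ missing $n$ are exactly $I \delete n$, and the members containing $n$ are exactly $\{Y \cup \{n\} : Y \in I \contract n\}$), and conversely $\phi(S,T) \delete n = S$ and $\phi(S,T) \contract n = T$ whenever $S,T$ lie in the domain above. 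Hence $I \mapsto (I \delete n, I \contract n)$ is injective, and a pair $(S,T) \in \C{n-1}{k} \times \C{n-1}{k-1}$ lies in its image if and only if $\phi(S,T)$ is consistent. So it suffices to prove: for $S \in \C{n-1}{k}$ and $T \subseteq \binom{[n-1]}{k-1}$, the set $\phi(S,T)$ is consistent if and only if $T$ is an upper order ideal of $\P_S$. (The right-hand condition already forces $T \in \C{n-1}{k-1}$ by \cref{lemma:upper-order-ideal-is-consistent}, matching the asserted image.)

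\textbf{Step 2 (the packet containing $n$).} I would isolate the following local fact. Fix $X \in \binom{[n-1]}{k}$ and put $W = X \cup \{n\} \in \binom{[n]}{k+1}$. Then $P(W)$ in lex order is $(X,\ X_k \cup \{n\},\ X_{k-1} \cup \{n\},\ \ldots,\ X_1 \cup \{n\})$, since deleting the largest element $n$ gives the lex-least member $X = W_{k+1}$, and deleting $x_j$ gives $W_j = X_j \cup \{n\}$. For any $I \subseteq \binom{[n]}{k}$ one has $X_j \cup \{n\} \in I$ iff $X_j \in I \contract n$, and $X$ is the lex-minimum of $P(W)$; from this one deduces that $P(W) \cap I$ is a prefix or suffix of $P(W)$ in lex order if and only if either ($X \in I$ and $\{j \in [k] : X_j \in I \contract n\}$ is an up-set of $[k]$) or ($X \notin I$ and $\{j \in [k] : X_j \in I \contract n\}$ is a down-set of $[k]$). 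Indeed, if $X \in I$ then $P(W) \cap I$ contains the lex-least element, so it can be a suffix only when it equals $P(W)$, whereas it is a prefix exactly when $\{j : X_j \cup \{n\} \in I\}$ is an up-set of $[k]$; if $X \notin I$ then $P(W) \cap I$ misses the lex-least element, so it can be a prefix only when empty, whereas it is a suffix exactly when $\{j : X_j \cup \{n\} \in I\}$ is a down-set of $[k]$.

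\textbf{Step 3 (conclusion) and the main obstacle.} Taking $I = \phi(S,T)$, I check consistency one packet $W \in \binom{[n]}{k+1}$ at a time. If $n \notin W$ then $P(W) \subseteq \binom{[n-1]}{k}$ and $P(W) \cap I = P(W) \cap S$, a prefix or suffix because $S$ is consistent. If $n \in W$, write $W = X \cup \{n\}$; then $X \in I \iff X \in S$ and $X_j \in I \contract n \iff X_j \in T$, so by Step 2, $P(W) \cap I$ is a prefix or suffix precisely when $\{j : X_j \in T\}$ is an up-set of $[k]$ in case $X \in S$ and a down-set of $[k]$ in case $X \in \binom{[n-1]}{k} \setminus S$. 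Ranging over all $X$, the first clause is exactly upward closure of $T$ under relation (1) of \cref{definition:I-consistent-poset}, the second is upward closure under relation (2), and since $\prec_S$ is the transitive closure of those two families of relations, together they say precisely that $T$ is an upper order ideal of $\P_S$. This establishes the reduced claim of Step 1, hence the theorem. I expect the only real content to be the dichotomy in Step 2 — keeping straight how $X$ being the lex-minimum of $P(W)$ forces ``prefix versus suffix'' and matching it with ``up-set versus down-set'' of $[k]$; the rest is bookkeeping plus a direct unwinding of \cref{definition:I-consistent-poset}, with quick sanity checks for the degenerate cases ($T = \varnothing$, $T$ absorbing a whole chain $X_1, \ldots, X_k$, and $k = n$, where there are no packets at all).
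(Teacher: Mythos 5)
Your proposal is correct and follows essentially the same route as the paper: injectivity via the recovery identity $I = (I \delete n) \cup \{X \cup \{n\} : X \in I \contract n\}$, and then a packet-by-packet analysis of $P(X \cup \{n\})$ exploiting that $X$ is the lex-minimum of that packet, matched against the generating relations of $\P_S$. The only cosmetic difference is that you isolate the prefix/suffix dichotomy in Step 2 and close the argument with the (correct) observation that an up-set of the transitive closure of a family of relations is the same as a set closed upward under each generating relation, whereas the paper splits the forward and backward containments into two separate verifications; the content is identical.
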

\begin{proof}
    A consistent set $I \in \C{n}{k}$ can be recovered from $(I \delete n, I \contract n)$ by the equation
    \begin{equation}
        \label{equation:recover-consistent-from-contraction-deletion}
        I = (I \delete n) \cup \{X \cup \{n\} : X \in I \contract n\}.
    \end{equation}
    Therefore, the map $I \mapsto (I \delete n, I \contract n)$ is injective.

    To show that $I \contract n$ is an upper order ideal of the consistent poset $\P_{I \delete n}$, suppose $X = [x_1, \ldots, x_k] \in \binom{[n-1]}{k}$. If $X \in I \delete n$, then $P(X)$ forms a chain in antilex order in $\P_{I \delete n}$. Since $X \in I \delete n \subset I$ and $X$ is the lex smallest element in $P(X \cup \{n\})$, the intersection $P(X \cup \{n\}) \cap I$ is a prefix of $P(X \cup \{n\})$ in lex order. It follows that $P(X) \cap (I \contract n)$ is a prefix of $P(X)$ in lex order. Thus, $P(X) \cap (I \contract n)$ is an upper order ideal of the subposet of $\P_{I \delete n}$ restricted to $P(X)$. A similar argument shows that if $X \not\in I \delete n$, then $P(X) \cap (I \contract n)$ is also an upper order ideal of $P(X)$ in $\P_{I \delete n}$. Therefore, for every $I \in \C{n}{k}$, $I \contract n$ is an upper order ideal of $\P_{I \delete n}$.
    
    Conversely, suppose $(S,T) \in \C{n-1}{k} \times \C{n-1}{k-1}$ and $T$ is an upper order ideal of $\P_S$. Let $I = S \cup \{X \cup \{n\} : X \in T\}$. Then by \eqref{equation:recover-consistent-from-contraction-deletion}, it suffices to show that $I$ is consistent. Let $X = [x_1, \ldots, x_{k+1}] \in \binom{[n]}{k+1}$. If $x_{k+1} < n$, then $P(X) \cap I = P(X) \cap S$. Since $S$ is consistent, $P(X) \cap S$ is a prefix or suffix of $P(X)$ in lex order. If $x_{k+1} = n$ and $X_{k+1} \in S$, then since $T$ is an upper order ideal of $\P_S$, $P(X_{k+1}) \cap T$ is a prefix of $P(X_{k+1})$ in lex order, and hence $P(X) \cap I$ is a prefix of $P(X)$ in lex order. Similarly, if $x_{k+1} = n$ and $X_{k+1} \not\in S$, then $P(X) \cap I$ is a suffix of $P(X)$ in lex order. Thus, $I$ is consistent, completing the proof.
\end{proof}

\begin{remark}
    Some of the ideas in \cref{theorem:contraction-deletion-equation} are present in Lemma 2.5 of \cite{ziegler93}. The new characterization in terms of consistent posets is based on joint work with Billey and Liu \cite{billey-chau-liu} and used in this paper to derive asymptotic results in Section 6.
\end{remark}

\section{Weaving Functions}
\label{section:weaving-functions}
In this section, a new computational tool called weaving functions is introduced. Weaving functions generalize an encoding of elements in $\B{n}{2}$ by Felsner in \cite{felsner1997} to an encoding of elements in $\B{n}{k}$ for integers $k$ and $n$ satisfying $1 \le k \le n$.

Let $\{0,1\}^*$ denote the set of words in the alphabet $\{0,1\}$. The empty word is denoted $\varnothing$, and the concatenation of words $u,v \in \{0,1\}^*$ is denoted $uv$.

\begin{definition}
    For integers $k,n$ with $1 \le k \le n$, the \defn{prefix-suffix indicator function} associated with $Y \in \binom{[n]}{k}$ is the function $\boldsymbol{w_Y}: \binom{[n]}{k-1} \to \{0,1\}^*$ defined by
    \begin{equation}
        w_{Y}(X) = \begin{cases}
            0 & \text{if $X = Y_k$,}\\
            1 & \text{if $X = Y_1$,}\\
            \varnothing & \text{otherwise.}
        \end{cases}
    \end{equation}
    The \defn{weaving function} associated with $\rho = (\rho_1, \ldots, \rho_{\binom{n}{k}}) \in \A{n}{k}$ is the function $\boldsymbol{W_\rho}: \binom{[n]}{k-1} \to \{0,1\}^*$ defined by 
    \begin{equation}
        W_\rho(X) = w_{\rho_1}(X)w_{\rho_2}(X) \cdots w_{\rho_{\binom{n}{k}}}(X).
    \end{equation}
\end{definition}

\begin{example}
    Let $\rho = (23,24,25,45,13,15,35,14,34,12) \in \A{5}{2}$ be the admissible order from \eqref{equation:example-admissible-order}. Then 
    \begin{align*}
        W_\rho(1) &= 0000,\\
        W_\rho(2) &= 0001,\\
        W_\rho(3) &= 0011,\\
        W_\rho(4) &= 1011,\\
        W_\rho(5) &= 1111.
    \end{align*}
    Let $\sigma = (123, 124, 125, 134, 135, 145, 234, 235, 245, 345) \in \A{5}{3}$. Some values of $W_\sigma$ are $W_\sigma(23) = 100$, $W_\sigma(24) = 10$ and $W_\sigma(34) = 110$. All other values of $W_\sigma$ consist solely of zeroes or solely of ones. Observe that all the values of $W_\rho$ are of the same length, but the same is not true of the values of $W_\sigma$.
\end{example}

\begin{lemma}
    \label{lemma:k-2-word-length}
    Let $n$ be an integer with $n \ge 2$. Then for $\rho \in \A{n}{2}$ and $i \in [n]$, $W_\rho(i)$ is a word of length $n-1$.
\end{lemma}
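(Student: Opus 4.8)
The plan is to reduce the claim to a counting statement. First I would observe that since each value $w_{\rho_j}(X)$ of a prefix-suffix indicator function is either the empty word or a single letter, the length of $W_\rho(X) = w_{\rho_1}(X)w_{\rho_2}(X)\cdots w_{\rho_{\binom{n}{2}}}(X)$ equals the number of indices $j$ for which $w_{\rho_j}(X)$ is nonempty. By the definition of $w_Y$ with $k = 2$, the word $w_Y(X)$ is nonempty precisely when $X = Y_2$ or $X = Y_1$, i.e. precisely when $X \in P(Y)$. Thus $|W_\rho(X)| = |\{\, j : X \in P(\rho_j)\,\}|$.

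Next I would specialize to $X = \{i\}$ for $i \in [n]$. Writing $Y = [y_1, y_2]$ with $y_1 < y_2$, the convention $Y_j = Y \setminus \{y_j\}$ gives $Y_1 = \{y_2\}$ and $Y_2 = \{y_1\}$, so $P(Y) = \{\{y_1\}, \{y_2\}\}$ and hence $\{i\} \in P(Y)$ if and only if $i \in Y$. Since $\rho$ is a total order on $\binom{[n]}{2}$, the sets $\rho_1, \ldots, \rho_{\binom{n}{2}}$ are exactly the elements of $\binom{[n]}{2}$, each occurring once, so $|W_\rho(\{i\})| = |\{\, Y \in \binom{[n]}{2} : i \in Y\,\}| = n-1$. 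This also makes transparent that the length does not depend on $\rho$ (and that the $n-1$ letters comprise one $0$ for each $Y$ in which $i$ is the smaller element and one $1$ for each $Y$ in which $i$ is the larger element).

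There is essentially no obstacle here: the only thing requiring care is keeping the indexing convention $Y_j = Y \setminus \{y_j\}$ straight, so that $Y_1$ and $Y_2$ are the singletons $\{y_2\}$ and $\{y_1\}$ rather than the other way around, and noting that the count is insensitive to the choice of admissible order precisely because every $2$-subset of $[n]$ appears exactly once among $\rho_1, \ldots, \rho_{\binom{n}{2}}$. I would present the argument in the uniform form $|W_\rho(X)| = |\{\, Y \in \binom{[n]}{2} : X \in P(Y)\,\}|$ first, since the same identity will presumably be reused when analyzing weaving functions for general $k$.
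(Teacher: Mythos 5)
Your proof is correct and takes essentially the same approach as the paper: both reduce the length of $W_\rho(i)$ to the count of $2$-subsets of $[n]$ containing $i$, which is $n-1$. You simply spell out a few intermediate steps (the general identity $|W_\rho(X)| = |\{Y : X \in P(Y)\}|$ and the indexing bookkeeping) that the paper leaves implicit.
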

\begin{proof}
    For $[p,q] \in \binom{[n]}{2}$, $w_{[p,q]}(i) \neq \varnothing$ if and only if either $p = i$ or $q = i$. There are exactly $n-1$ sets in $\binom{[n]}{2}$ that contain $i$ so $W_\rho(i)$ is a word of length $n-1$.
\end{proof}

A natural question one can ask is which admissible orders in $\A{n}{k}$ have the same weaving functions. \cref{theorem:weaving-functions} below implies that two admissible orders have the same weaving function if and only if they are commutation equivalent. As a consequence, weaving functions are well-defined on commutation classes in $\B{n}{k}$ and the map $[\rho] \mapsto W_{\rho}$ is injective.

\begin{lemma}
    \label{lem:weaving-fuctions-helper}
    Let $k,n$ be integers such that $1 \le k \le n$ and $\rho = (\rho_1, \ldots, \rho_{\binom{[n]}{k}})$, $\sigma = (\sigma_1, \ldots, \sigma_{\binom{[n]}{k}})$ be two admissible orders in $\A{n}{k}$ such that $W_\rho = W_\sigma$. If $(\rho_1, \ldots, \rho_{i-1}) = (\sigma_1, \ldots, \sigma_{i-1})$ for some integer $i$ such that $1 \le i \le \binom{n}{k}$, then there exists $\sigma' \in \A{n}{k}$ such that $\sigma \sim \sigma'$ and $(\rho_1, \ldots, \rho_i) = (\sigma'_1, \ldots, \sigma'_i)$.
\end{lemma}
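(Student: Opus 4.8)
The plan is to show that we can ``advance'' the common prefix of $\sigma$ past position $i-1$ by applying commutation moves to $\sigma$ until the element in position $i$ agrees with $\rho_i$. Write $Y = \rho_i$ and suppose $Y = \sigma_j$ for some $j \ge i$. The goal is to move $\sigma_j$ leftward in $\sigma$ through a sequence of commutations until it occupies position $i$, i.e.\ to show that $\sigma_j$ commutes with each of $\sigma_{i}, \sigma_{i+1}, \ldots, \sigma_{j-1}$. If this holds, then the resulting $\sigma' \sim \sigma$ has $\sigma'_i = Y = \rho_i$ and still agrees with $\rho$ on the first $i-1$ positions (those are untouched), completing the proof.

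So the crux is: for each $m$ with $i \le m < j$, the sets $\sigma_m$ and $\sigma_j = Y$ commute, i.e.\ $|\sigma_m \setminus Y| > 1$ (equivalently $P(\sigma_m) \cap P(Y) = \varnothing$). First I would argue that $\sigma_m \ne Y_1$ and $\sigma_m \ne Y_k$ for such $m$, using the hypothesis $W_\rho = W_\sigma$. The idea: consider the letter contributed to $W_\sigma(Y_1)$ by $\sigma_j$. Since $(\rho_1,\ldots,\rho_{i-1}) = (\sigma_1,\ldots,\sigma_{i-1})$ and $\rho_i = \sigma_j = Y$, the element $Y$ appears in position $i$ of $\rho$ but position $j \ge i$ of $\sigma$; for $m$ strictly between $i$ and $j$ none of $\sigma_i,\ldots,\sigma_{j-1}$ equals $Y$. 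I would track the word $W_\sigma(Y_1)$: the prefixes $w_{\sigma_1}(Y_1)\cdots w_{\sigma_{i-1}}(Y_1)$ and $w_{\rho_1}(Y_1)\cdots w_{\rho_{i-1}}(Y_1)$ coincide, and in $\rho$ the next non-$\varnothing$ contribution to $W_\rho(Y_1)$ comes from $\rho_i = Y$ itself, contributing a $1$ (since $Y_1$ is the ``$1$''-case). For $W_\sigma = W_\rho$ to hold, the contributions to $W_\sigma(Y_1)$ from $\sigma_i,\ldots,\sigma_{j-1}$ must all be $\varnothing$, which forces $\sigma_m \ne (\sigma_m)$-with-largest-removed-$=Y_1$ and $\sigma_m \ne (\sigma_m)$-with-smallest-removed-$=Y_1$ for $i \le m < j$; and similarly using $W_\sigma(Y_k) = W_\rho(Y_k)$. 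Unwinding the definition of $w$, this says $Y_1 \notin P(\sigma_m)$ via the extreme positions — but I actually need all of $P(Y) \cap P(\sigma_m) = \varnothing$, not just that $Y_1, Y_k$ are not the extreme $(k-1)$-subsets of $\sigma_m$. Here is where admissibility enters: if $P(\sigma_m) \cap P(Y) \ne \varnothing$, then $|\sigma_m \cap Y| = k-1$, so $\sigma_m$ and $Y$ together span a set $Z \in \binom{[n]}{k+1}$, and both $\sigma_m, Y \in P(Z)$; restricting the admissible order $\sigma$ to $P(Z)$ must give lex or antilex order on $P(Z)$, and since $Y = \sigma_j$ comes after $\sigma_m$ in $\sigma$, the relative position of $Y$ and $\sigma_m$ within the packet $P(Z)$ is constrained. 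Combined with the fact (from the weaving-function equality) that in $\sigma$ the set $Y$ is not adjacent in the packet-ordering sense to $\sigma_m$, or rather that the extreme elements $Y_1$ or $Y_k$ can't be among $P(\sigma_m)$, I would derive a contradiction with $\sigma$ being admissible.

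The main obstacle I expect is precisely this last step: translating ``the weaving function values at $Y_1$ and $Y_k$ force no non-$\varnothing$ contributions from $\sigma_i,\ldots,\sigma_{j-1}$'' into ``$\sigma_m$ commutes with $Y$ for all $i \le m < j$.'' The subtlety is that $\sigma_m$ could share the $(k-1)$-subset $Y_r$ with $Y$ for some \emph{interior} index $1 < r < k$, which the weaving function (which only records the first and last subsets $Y_1, Y_k$) does not directly see. To handle this I would use the packet structure: if $\sigma_m$ and $Y$ share an interior subset $Y_r$ of $Y$, consider the $(k+1)$-set $Z = \sigma_m \cup Y$ and look at $\sigma|_{P(Z)}$, which is lex or antilex on $P(Z)$; within this packet both $Y$ and $\sigma_m$ appear, and because $Y = \sigma_j$ occurs strictly later than $\sigma_m$ in $\sigma$ while all of $\sigma_i,\ldots,\sigma_{j-1}$ (a contiguous block of $\sigma$ not containing $Y$) lie between them, the whole packet $P(Z)$ (which has $k+1$ consecutive-in-the-admissible-sense elements) must appear in $\sigma$ in an order incompatible with the weaving-function constraint on $W_\sigma(Y_1)$ or $W_\sigma(Y_k)$ — spelling this out carefully, using that the first or last element of $P(Z)$ in lex order is $Z_{k+1} = Y$ or $Z_1$ and relating it back to which of $Y_1, Y_k$ appears, is the delicate bookkeeping. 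Once the commutation claim is established, assembling $\sigma'$ by the explicit sequence of transpositions is routine, and the agreement of the first $i-1$ entries is immediate since those positions are never involved in any of the commutations used.
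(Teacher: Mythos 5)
Your overall plan—slide $\sigma_j = \rho_i$ leftward past $\sigma_i,\ldots,\sigma_{j-1}$ by commutations, and use $W_\rho = W_\sigma$ to rule out a blocking element—is the same as the paper's, and you have correctly located the obstacle: an element $\sigma_m$ in the blocking range could share only an \emph{interior} $(k-1)$-subset with $\rho_i$, which tracking $W(\,\cdot\,)$ at the two extremes $(\rho_i)_1$ or $(\rho_i)_k$ does not directly detect. But you stop short of resolving this, and the ``delicate bookkeeping'' you defer is precisely the content of the proof.

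The missing ingredients are three specific moves that the paper uses to collapse the obstacle. First, when a blocking element exists, take the \emph{least} such index $i'$ with $i \le i' < j$ and $\sigma_{i'}$ not commuting with $\sigma_j$; minimality of $i'$ guarantees that for $i \le r < i'$ the element $\sigma_r$ lies outside the packet $P(Z)$ and contributes $\varnothing$ to the weaving function value being tracked. Second, exploit the symmetry of the hypotheses in $\rho$ and $\sigma$: by swapping their roles if necessary, one may assume $X = \rho_i$ is lexicographically smaller than $Y = \sigma_{i'}$, which, together with admissibility, the common prefix, and minimality, pins down $X = Z_{k+1}$ and $Y = Z_1$ (the lex-first and lex-last elements of $P(Z)$). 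Third, the correct $(k-1)$-subset at which to evaluate the weaving function is $Z_{1,k+1} = X \cap Y$; because of the identifications in the previous step this happens to equal $X_1$, so the prefix $w_{\rho_1}(Z_{1,k+1})\cdots w_{\rho_{i-1}}(Z_{1,k+1})$ matches $w_{\sigma_1}(Z_{1,k+1})\cdots w_{\sigma_{i'-1}}(Z_{1,k+1})$, while $w_X(Z_{1,k+1}) = 1 \ne 0 = w_Y(Z_{1,k+1})$, giving the contradiction. Without these three steps your argument cannot close: there is no reason that $X$'s contribution should land at $X_1$ or $X_k$ rather than an invisible interior subset, and no guarantee that the intermediate elements contribute nothing. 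The proposal as written is an accurate description of the difficulty rather than a proof, so I would grade it as having a genuine gap in exactly the place you flag.
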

\begin{proof}
    Let $j \ge i$ be the index in $\sigma$ such that $\rho_i = \sigma_j$. Suppose for contradiction that $\sigma_j$ does not commute with some element in $\{\sigma_i, \ldots, \sigma_{j-1}\}$. Then let $i'$ be the least integer such that $i \le i' < j$ and $\sigma_{i'}$ does not commute with $\sigma_j$, and let $j'$ be the index in $\rho$ such that $\rho_{j'} = \sigma_{i'}$. Let $X = \rho_i = \sigma_j$ and $Y = \rho_{j'} = \sigma_{i'}$. Since $X$ and $Y$ do not commute, there exists some $Z \in \binom{[n]}{k+1}$ such that $X,Y \in P(Z)$. By interchanging the roles of $\rho$ and $\sigma$ if necessary, one may assume that $X$ is lexicographically smaller than $Y$.

    Observe that $X$ occurs before $Y$ in $\rho$, whereas $Y$ occurs before $X$ in $\sigma$. Since $\rho$ and $\sigma$ are admissible orders, the restrictions $\rho|_{P(Z)}$ and $\sigma|_{P(Z)}$ must be the lex and antilex orders on $P(Z)$, respectively. Since $X = \rho_i$ and $(\rho_1, \ldots, \rho_{i-1}) = (\sigma_1, \ldots, \sigma_{i-1})$, it must be that $X = Z_{k+1}$. Since $Y = \sigma_{i'}$ and $i'$ was chosen to be the least integer between $i$ and $j$ such that $\sigma_{i'}$ does not commute with $\sigma_j$, it must be that $Y = Z_1$.
    
    The word $W_\rho(Z_{1,k+1})$ begins with the prefix $w_{\rho_1}(Z_{1,k+1}) \cdots w_{\rho_{i-1}}(Z_{1,k+1})$ and the word $W_\sigma(Z_{1,k+1})$ begins with the prefix $w_{\sigma_1}(Z_{1,k+1}) \cdots w_{\sigma_{i'-1}}(Z_{1,k+1})$. The choice of $i'$ implies that $\sigma_r \neq Z_{1,k+1} \cup \{z\}$ for any $z \in [n] \setminus Z_{1,k+1}$ and $i \le r < i'$. Therefore, $w_{\sigma_r}(Z_{1,k+1}) = \varnothing$ for $i \le r < i'$, and hence 
    \[
    w_{\rho_1}(Z_{1,k+1}) \cdots w_{\rho_{i-1}}(Z_{1,k+1}) = w_{\sigma_1}(Z_{1,k+1}) \cdots w_{\sigma_{i'-1}}(Z_{1,k+1}).
    \]
    However, $w_{\rho_i}(Z_{1,k+1}) = 1$ and $w_{\sigma_{i'}}(Z_{1,k+1}) = 0$. Therefore, $W_\rho(Z_{1,k+1}) \neq W_\sigma(Z_{1,k+1})$ which is a contradiction. It follows that $\sigma_j$ commutes with every element in $\{\sigma_i, \ldots, \sigma_{j-1}\}$ to yield an admissible order $\sigma' \sim \sigma$ such that $(\rho_1, \ldots, \rho_i) = (\sigma'_1, \ldots, \sigma'_i)$.
\end{proof}

\begin{theorem}
    \label{theorem:weaving-functions}
    For integers $1 \le k \le n$ and $[\rho], [\sigma] \in \B{n}{k}$, $[\rho] = [\sigma]$ if and only if $W_\rho = W_\sigma$.
\end{theorem}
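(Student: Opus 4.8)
The plan is to prove the two implications separately, with the forward direction doing double duty: it also shows that $W_\rho$ depends only on the commutation class $[\rho]$, which is needed to even make sense of the statement and is reused in the reverse direction.

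\textbf{Forward direction ($\Rightarrow$).} First I would check that weaving functions are unchanged by a single commutation move, so that $\rho \sim \sigma$ forces $W_\rho = W_\sigma$. Suppose $\sigma$ is obtained from $\rho = (\rho_1, \ldots, \rho_{\binom{n}{k}})$ by transposing adjacent commuting entries $\rho_i$ and $\rho_{i+1}$. Fix $X \in \binom{[n]}{k-1}$. By definition of the prefix-suffix indicator, $w_{\rho_i}(X) \ne \varnothing$ only if $X \in P(\rho_i)$, and likewise $w_{\rho_{i+1}}(X) \ne \varnothing$ only if $X \in P(\rho_{i+1})$. Since $\rho_i$ and $\rho_{i+1}$ commute, $P(\rho_i) \cap P(\rho_{i+1}) = \varnothing$, so at most one of the two values is nonempty, and interchanging $\rho_i$ with $\rho_{i+1}$ does not alter the concatenation $W_\rho(X) = w_{\rho_1}(X)\cdots w_{\rho_{\binom{n}{k}}}(X)$. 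As $X$ was arbitrary, $W_\rho = W_\sigma$, and by transitivity $W_\rho = W_\sigma$ whenever $[\rho] = [\sigma]$.

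\textbf{Reverse direction ($\Leftarrow$).} Suppose $W_\rho = W_\sigma$. I would construct by induction on $m$ a chain of admissible orders $\sigma = \sigma^{(0)} \sim \sigma^{(1)} \sim \cdots \sim \sigma^{(m)}$ in $\A{n}{k}$, all commutation equivalent to $\sigma$, with $(\rho_1, \ldots, \rho_m) = (\sigma^{(m)}_1, \ldots, \sigma^{(m)}_m)$. The base case $m = 0$ is vacuous. For the inductive step, given $\sigma^{(m)}$ with $m < \binom{n}{k}$, the forward direction applied to $\sigma^{(m)} \sim \sigma$ gives $W_{\sigma^{(m)}} = W_\sigma = W_\rho$, so the hypotheses of \cref{lem:weaving-fuctions-helper} hold for $\rho$ and $\sigma^{(m)}$ with $i = m+1$; the lemma yields $\sigma^{(m+1)} \sim \sigma^{(m)}$ with $(\rho_1, \ldots, \rho_{m+1}) = (\sigma^{(m+1)}_1, \ldots, \sigma^{(m+1)}_{m+1})$. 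Taking $m = \binom{n}{k}$, the order $\sigma^{(\binom{n}{k})}$ agrees with $\rho$ in every coordinate, hence equals $\rho$, and $\rho = \sigma^{(\binom{n}{k})} \sim \sigma$, so $[\rho] = [\sigma]$.

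\textbf{Main obstacle.} All of the real work is packaged in \cref{lem:weaving-fuctions-helper}, which is already established, so the theorem is a short bootstrap. The only subtlety is organizational: the lemma's hypothesis $W_\rho = W_{\sigma^{(m)}}$ must be re-derived at every stage of the induction, which is precisely why the commutation-invariance of $W$ needs to be proved first and then invoked inside the induction rather than treated as an afterthought.
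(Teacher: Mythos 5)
Your proof is correct and follows the same strategy as the paper: commutation-invariance of $W$ via disjointness of packets of commuting sets for the forward direction, and iterated application of \cref{lem:weaving-fuctions-helper} for the reverse. The only cosmetic difference is that you make the reverse-direction induction fully explicit, including the (necessary, and implicitly used in the paper) re-derivation of the hypothesis $W_\rho = W_{\sigma^{(m)}}$ from the forward direction at each step.
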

\begin{proof}
    First, suppose $[\rho] = [\sigma]$. It suffices to prove that $W_\rho = W_\sigma$ in the case where $\rho$ and $\sigma$ differ by a single commutation. Let $\sigma$ be obtained from $\rho$ by commuting $X,Y \in \binom{[n]}{k}$. Since $X$ and $Y$ commute, $P(X) \cap P(Y) = \varnothing$. In particular, the sets $\{X_1, X_k\}$ and $\{Y_1, Y_k\}$ are disjoint. Thus, for any $Z \in \binom{[n]}{k-1}$, at least one of $w_X(Z)$ or $w_Y(Z)$ is the empty string $\varnothing$, which implies that $w_X(Z)w_Y(Z) = w_Y(Z)w_X(Z)$. Since $\rho$ and $\sigma$ differ only by commuting $X$ and $Y$, it follows that $W_\rho = W_\sigma$. 

    Next, suppose that $W_\rho = W_\sigma$. To show that $[\rho] = [\sigma]$, let $1 \le i \le \binom{n}{k}$ be the maximum integer such that $(\rho_1, \ldots, \rho_{i-1}) = (\sigma_1, \ldots, \sigma_{i-1})$. If $i = \binom{n}{k}$, then $\rho = \sigma$ and hence $[\rho] = [\sigma]$. Otherwise, let $i < \binom{n}{k}$. Then by \cref{lem:weaving-fuctions-helper}, there exists $\sigma' \in \A{n}{k}$ such that $\sigma' \sim \sigma$ and $(\rho_1, \ldots, \rho_i) = (\sigma'_1, \ldots, \sigma'_i)$. Repeating the argument on $\rho$ and $\sigma'$ implies that $[\rho] = [\sigma']$. Since $\sigma \sim \sigma'$, it follows that $[\rho] = [\sigma]$.
\end{proof}

\section{Asymptotic Enumeration}
In this section, asymptotics are obtained for $|\B{n}{k}|$ and $|\Bstar{n}{k}|$. The Eulerian number counting the number of permutations in $\S_n$ with $d$ descents is denoted $\boldsymbol{A(n,d)}$. The following theorem is the more precise statement of \cref{theorem:higher-bruhat-order-asymptotics-intro}.

\begin{theorem}
    \label{theorem:higher-bruhat-order-asymptotics}
    For every integer $k \ge 2$ and sufficiently large $n \gg k$, we have
    \begin{equation}
    \label{equation:higher-bruhat-order-asymptotics}
    \frac{A(k, \lfloor (k-1)/2 \rfloor) n^k}{k!(k+1)!} + O(n^{k-1}) \le \log_2 |\B{n}{k}| \le \frac{n^k}{k!\log{2}} + O(n^{k-1}\log n).
    \end{equation}
\end{theorem}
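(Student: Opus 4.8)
The plan is to prove both bounds by reducing, through the deletion--contraction recursion of \cref{theorem:contraction-deletion-equation}, to a base case handled by the weaving functions of \cref{section:weaving-functions}; the outer induction is on $k$, and the recursion runs the induction on $n$.

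For the upper bound, write $b(n,k)=\log_2|\B{n}{k}|$. Combining the isomorphism $\B{n}{k}\cong\C{n}{k+1}$ of \cref{theorem:higher-bruhat-order-isomorphism} with \cref{theorem:contraction-deletion-equation}, each $I\in\C{n}{k+1}$ is recovered from the pair $(I\delete n,\,I\contract n)$, where $I\delete n\in\C{n-1}{k+1}$ and $I\contract n$ is an upper order ideal of $\P_{I\delete n}$; every such ideal is consistent by \cref{lemma:upper-order-ideal-is-consistent}, hence lies in $\C{n-1}{k}$. So $|\C{n}{k+1}|\le|\C{n-1}{k+1}|\cdot|\C{n-1}{k}|$, i.e.\ $b(n,k)\le b(n-1,k)+b(n-1,k-1)$, and iterating in the first argument down to the trivial $\B{k}{k}$ gives $b(n,k)\le\sum_{m=k}^{n-1}b(m,k-1)$. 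It therefore suffices to prove $b(m,k-1)\le\frac{m^{k-1}}{(k-1)!\log 2}+O(m^{k-2}\log m)$ for all $m$ and then sum using $\sum_{m\le n}m^{k-1}=\frac{n^k}{k}+O(n^{k-1})$. The base case $k-1=2$ is exactly where weaving functions pay off: $[\rho]\mapsto W_\rho$ is injective by \cref{theorem:weaving-functions}, each word $W_\rho(i)$ has length $n-1$ by \cref{lemma:k-2-word-length}, and a direct count from the definition shows $W_\rho(i)$ has exactly $i-1$ letters equal to $1$; hence $|\B{n}{2}|\le\prod_{i=1}^n\binom{n-1}{i-1}$, and $\log_2$ of the right-hand side is $\sum_{j=0}^{n-1}\log_2\binom{n-1}{j}=\frac{n^2}{2\log 2}+O(n\log n)$, the constant $\frac{1}{2\log 2}=\int_0^1 H(x)\,dx$ coming from the binary entropy $H$. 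This anchors the induction, whose inductive step is the displayed inequality; the $k$-dependent error constants stay finite throughout.

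For the lower bound I use \cref{theorem:contraction-deletion-equation} in the other direction, to grow a large family. The plan is to build $\mathcal{I}_n\subseteq\C{n}{k+1}$ recursively so that $I\in\mathcal{I}_n$ exactly when $I\delete n\in\mathcal{I}_{n-1}$ and $I\contract n$ is the upper order ideal of $\P_{I\delete n}$ generated by some subset of a distinguished antichain $A_{I\delete n}$ of $\P_{I\delete n}$. Since the $2^{|A|}$ upper order ideals generated by subsets of an antichain $A$ are pairwise distinct, $|\mathcal{I}_n|\ge|\mathcal{I}_{n-1}|\cdot 2^{\min_{S\in\mathcal{I}_{n-1}}|A_S|}$, whence $\log_2|\B{n}{k}|\ge\sum_m\min_{S\in\mathcal{I}_{m-1}}|A_S|$. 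The substance is to define $\mathcal{I}_m$ by a structural invariant guaranteeing that, for every surviving $S$, the poset $\P_S$ on $\binom{[m-1]}{k}$ contains an antichain — equivalently a family of $k$-subsets meeting each packet chain at most once — of size $\frac{A(k,\lfloor(k-1)/2\rfloor)}{(k-1)!\,(k+1)!}\,m^{k-1}(1+o(1))$. The Eulerian number enters because, with respect to the chain structure that \cref{definition:I-consistent-poset} places on each copacket, the $k$-subsets one may add to such a packing are governed by the descent pattern of an associated permutation of $[k]$, and the densest admissible packing is the one built from permutations with the maximal number $\lfloor(k-1)/2\rfloor$ of descents. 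Summing $\frac{A(k,\lfloor(k-1)/2\rfloor)}{(k-1)!\,(k+1)!}m^{k-1}$ over $m\le n$ then yields $\frac{A(k,\lfloor(k-1)/2\rfloor)\,n^k}{k!\,(k+1)!}+O(n^{k-1})$. Finally, the stated asymptotics of $c_k=A(k,\lfloor(k-1)/2\rfloor)$ follow from the local central limit theorem applied to the number of descents of a uniform random permutation of $[k]$, which has mean $\tfrac{k-1}{2}$ and variance $\Theta(k)$.

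The upper bound is, modulo the entropy estimate $\int_0^1 H=\frac{1}{2\log 2}$, essentially bookkeeping once one notices that the recursion telescopes cleanly onto the case $k=2$. The genuine difficulty is the lower bound: identifying the extremal antichain inside $\P_S$ and, crucially, choosing the recursively defined family $\mathcal{I}_m$ so that an antichain of the optimal order $\Theta_k(m^{k-1})$ is available for \emph{every} $S$ that the construction produces; the Eulerian number is precisely the price of maintaining that invariant.
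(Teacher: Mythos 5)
Your upper bound argument is essentially the paper's: induction on $k$, base case $k=2$ handled by weaving functions (your count of $i-1$ ones and $n-i$ zeros in $W_\rho(i)$ is a fair reading of the definition, and $\binom{n-1}{i-1}$ is what matters in either case), and the telescoping $|\C{n}{k+1}|\le\prod_{m=k}^{n-1}|\C{m}{k}|$ obtained from \cref{theorem:contraction-deletion-equation} together with $|\C{m}{k}|=|\B{m}{k-1}|$.

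The lower bound, however, is where your proposal departs from the paper and where it has a genuine gap. You plan a recursive construction of families $\mathcal{I}_m\subseteq\C{m}{k+1}$, using \cref{theorem:contraction-deletion-equation} to attach, above each $S\in\mathcal{I}_{m-1}$, the $2^{|A_S|}$ upper order ideals of $\P_S$ generated by subsets of a distinguished antichain $A_S$. The entire weight of the argument then rests on exhibiting $A_S$ of size $\Theta_k(m^{k-1})$ with the specific Eulerian constant \emph{for every} $S$ that the recursion produces, and simultaneously ensuring the family does not thin out. You explicitly label this ``the genuine difficulty'' but do not carry it out: you neither construct $A_S$, nor identify the claimed invariant on $\mathcal{I}_m$, nor justify that the Eulerian number arises in the way you gesture at (``descent pattern of an associated permutation''). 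Moreover, the per-step antichain size you quote, $\frac{A(k,\lfloor(k-1)/2\rfloor)}{(k-1)!(k+1)!}m^{k-1}$, is not what the Gale order on $\binom{[m-1]}{k}$ actually gives (the middle $q$-binomial coefficient there yields a constant involving $A(k-1,\cdot)$, not $A(k,\cdot)$), so even the arithmetic of the sketch does not close. The paper sidesteps all of this with a one-step argument: $\varnothing\in\C{n}{k+2}$, so by \cref{lemma:upper-order-ideal-is-consistent} every upper order ideal of the single poset $\P_\varnothing$ on $\binom{[n]}{k+1}$ is already an element of $\C{n}{k+1}$, giving $\log_2|\C{n}{k+1}|\ge\width(\P_\varnothing)$; the Gale order antichain of constant coordinate sum has size the middle coefficient of $\binom{n}{k+1}_q$, and Stanley--Zanello's asymptotic for $q$-binomial coefficients produces the Eulerian constant directly. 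No recursion, no invariant to maintain, no curated family. You should replace your lower-bound sketch with this direct count, or else supply the missing antichain construction inside $\P_S$ and the argument that it survives the recursion.
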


\begin{proof}
    The upper bound is proved by induction on $k$. To prove the base case $k = 2$, consider the number of possible weaving patterns. By \cref{lemma:k-2-word-length} and \cref{theorem:weaving-functions}, $|\B{n}{2}|$ is bounded above by the number of functions $f: [n] \to \{0,1\}^*$ such that $f(i)$ is a binary word consisting of $(i-1)$ zeroes and $(n-i)$ ones. There are $\prod_{i=1}^n \binom{n-1}{i-1}$ such functions, which by Theorem 3.2 of \cite{lagariasmehta2016}, has the asymptotic expression
    \[
    \log_2 \left(\prod_{i=1}^n \binom{n-1}{i-1}\right) = \frac{n^2}{2\log{2}} + O(n\log n).
    \]
    
    Suppose the upper bound holds for $\log_2|\B{n}{k-1}|$ by induction. Repeated application of \cref{theorem:contraction-deletion-equation} to $|\C{n}{k+1}|$ yields an upper bound of
    \begin{align*}
    |\C{n}{k+1}| &\le |\C{n-1}{k+1}| \cdot |\C{n-1}{k}|\\
    &\le |\C{n-2}{k+1}| \cdot |\C{n-2}{k}| \cdot |\C{n-1}{k}|\\
    &\qquad\vdots\\
    &\le \prod_{m=k}^{n-1} |\C{m}{k}|.
    \end{align*}
    By \cref{theorem:fundamental-duality}, $|\C{n}{k+1}| = |\B{n}{k}|$ and $|\C{m}{k}| = |\B{m}{k-1}|$ for $m = k, k+1, \ldots, n-1$. Taking logarithms yields
    \begin{align*}
        \log_2 |\B{n}{k}| &\le \sum_{m=k}^{n-1} \log_2 |\B{m}{k-1}|\\
        &\le \sum_{m=k}^{n-1} \left(\frac{m^{k-1}}{(k-1)! \log 2} + O(m^{k-2} \log m)\right)\\
        &\le \frac{n^k}{k!\log 2} + O(n^{k-1}\log n),
    \end{align*}
    where the last step follows from Faulhaber's formula for the sum of the $(k-1)$th powers.
    
    Next, the lower bound is proved by an explicit construction. \cref{lemma:upper-order-ideal-is-consistent} implies that $|\C{n}{k+1}|$ is bounded below by the number of upper order ideals of $\P_\varnothing$. The partial order in $\P_\varnothing$ is equal to the Gale order where $[x_1, \ldots, x_{k+1}] \preceq [y_1, \ldots, y_{k+1}]$ if and only if $x_i \le y_i$ for $i = 1, 2, \ldots, k+1$. The number of upper order ideals is therefore at least $2^{\width(\P_\varnothing)}$ where $\width(\P_\varnothing)$ is the maximal size of an antichain in $\P_\varnothing$. Taking logarithms yields 
    \begin{equation}
        \width(\P_\varnothing) \le \log_2 |\C{n}{k+1}|.
    \end{equation}
    
    To bound $\width(\P_\varnothing)$, for an integer $c$ consider the set 
    \begin{equation}
        S_c = \left\{[x_1, \ldots, x_{k+1}] \in \binom{[n]}{k+1}: \sum_{i=1}^{k+1} x_i = c\right\}.
    \end{equation}
    Observe that $S_c$ is an antichain in $\P_\varnothing$. The elements in $S_c$ are in bijection with partitions of $c-\binom{k+1}{2}$ that fit in an $(k+1) \times (n-(k+1))$ rectangle, with the explicit bijection map
    \begin{equation}
        [x_1, \ldots, x_{k+1}] \leftrightarrow [x_1 - 1, x_2 -2, \ldots, x_{k+1} - (k+1)].
    \end{equation}
    The number of such restricted partitions is known to be given by the coefficient of $q^{c-\binom{k+1}{2}}$ in the $q$-binomial coefficient $\binom{n}{k+1}_q$. Thus, 
    \begin{equation}
        [q^{\lfloor\frac{k+1}{2}\rfloor(n-k-1)}]\binom{n}{k+1}_q \le \max_i\ [q^i]\binom{n}{k+1}_q \le \width(\P_\varnothing),
    \end{equation}
    where the notation $[q^i]\binom{n}{k+1}_q$ denotes the coefficient of $q^i$ in $\binom{n}{k+1}_q$. By Theorem 2.4 \cite{stanleyzanello2016} and the discussion of Euler-Frobenius numbers following the theorem, for a fixed integer $\alpha \ge 0$, the following asymptotic holds for $a \to \infty$
    \begin{equation}
    \label{equation:stanley-zanello}
    [q^{\alpha a}]\binom{a+k+1}{k+1}_q = \frac{A(k,  \alpha-1)a^k}{k!(k+1)!} + O(a^{k-1}).
    \end{equation}
    Notice that in \cite{stanleyzanello2016}, the Eulerian number $A(n,d)$ is defined to by the number of permutations in $\mathfrak{S}_n$ with $d-1$ descents as opposed to $d$ descents. This is accounted for in \eqref{equation:stanley-zanello} by writing $A(k, \alpha-1)$ as opposed to $A(k, \alpha)$. Finally, setting $\alpha = \lfloor \frac{k+1}{2} \rfloor$ and $a = n-k-1$ yields the desired lower bound 
    \[
    \frac{A(k, \lfloor (k-1)/2 \rfloor)n^k}{k!(k+1)!} + O(n^{k-1}) \le \log_2 |\C{n}{k+1}|.
    \]
\end{proof}

\begin{remark}
    The constant $c_k$ in \cref{theorem:higher-bruhat-order-asymptotics-intro} is given by the maximal Eulerian number $A(k, \lfloor (k-1)/2 \rfloor)$ (see  OEIS sequence \href{https://oeis.org/A006551}{A006551}). From Equation 5.7 and 5.8 of \cite{bender1973}, the asymptotic behavior is 
    \begin{equation}
    A(k, \lfloor (k-1)/2 \rfloor) \sim \frac{k!e^{-\alpha \lfloor(k-1)/2\rfloor}}{r^{n+1}\sigma_\alpha\sqrt{2\pi k}},
    \end{equation}
    where
    \begin{equation}
        \label{equation:bender2}
        \begin{split}
        \frac{\lfloor (k-1)/2 \rfloor}{k} &= \frac{e^\alpha}{e^\alpha-1} - \frac{1}{\alpha}\\
        r &= \frac{\alpha}{e^{\alpha}-1}\\
        \sigma_\alpha^2 &= \frac{1}{\alpha^2} - \frac{e^{\alpha}}{(e^{\alpha}-1)^2}.
        \end{split}
    \end{equation}
    As $k \to \infty$, \eqref{equation:bender2} implies $\alpha \to 0$, $r \to 1$, and $\sigma_\alpha^2 \to \frac{1}{12}$. Thus, as $k$ grows large, $c_k$ tends to $\frac{k!}{\sqrt{24 \pi k}}$ and the lower bound for $\log_2 |\B{n}{k}|$ tends to $\displaystyle \frac{n^k}{(k+1)!\sqrt{24 \pi k}}$.
\end{remark}

\begin{theorem}
\label{theorem:dual-higher-bruhat-order-asymptotics}
For integers $k \ge 3$ and $n \gg k$, we have
\begin{equation}
    \label{equation:dual-higher-bruhat-order-asymptotics}
    \frac{A(k-2, \lfloor (k-3)/2 \rfloor)n^{k-2}}{(k-2)!(k-1)!} + O(n^{k-3})
    \le \log_2 |\Bstar{n}{k}| \le \frac{n^{k-2}}{(k-2)!} + O(n^{k-3}\log n).
\end{equation}
\end{theorem}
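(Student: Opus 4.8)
The plan is to transfer everything to the primal higher Bruhat orders through the isomorphisms of \cref{theorem:fundamental-duality} and then mimic the proof of \cref{theorem:higher-bruhat-order-asymptotics}. By \cref{theorem:fundamental-duality} we have $|\Bstar{n}{k}| = |\B{n}{n-k}| = |\C{n}{n-k+1}| = |\Cstar{n}{k-1}|$, and likewise $|\Bstar{m}{j}| = |\C{m}{m-j+1}| = |\Cstar{m}{j-1}|$ for all admissible $m$ and $j$. I would prove the upper bound by induction on $k \ge 3$ and the lower bound directly, with the base case of the induction supplied by the known enumeration of $\B{n}{n-3}$.

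For the upper bound, applying \cref{theorem:contraction-deletion-equation} to $\C{n}{n-k+1}$ gives the injection $I \mapsto (I \delete n, I \contract n)$ and hence $|\C{n}{n-k+1}| \le |\C{n-1}{n-k+1}| \cdot |\C{n-1}{n-k}|$; reindexing the two factors through \cref{theorem:fundamental-duality} rewrites this as $|\Bstar{n}{k}| \le |\Bstar{n-1}{k-1}| \cdot |\Bstar{n-1}{k}|$. Iterating the last factor down to $|\Bstar{k}{k}| = |\B{k}{0}| = 1$ yields $|\Bstar{n}{k}| \le \prod_{m=k}^{n-1} |\Bstar{m}{k-1}|$, so $\log_2 |\Bstar{n}{k}| \le \sum_{m=k}^{n-1} \log_2 |\Bstar{m}{k-1}|$. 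Substituting the inductive hypothesis $\log_2 |\Bstar{m}{k-1}| \le \frac{m^{k-3}}{(k-3)!} + O(m^{k-4}\log m)$ (valid for $m$ large, the finitely many small terms being absorbed into the error) and summing by Faulhaber's formula gives $\frac{n^{k-2}}{(k-2)!} + O(n^{k-3}\log n)$, exactly as in the inductive step of \cref{theorem:higher-bruhat-order-asymptotics}. The base case $k = 3$ requires $\log_2 |\Bstar{n}{3}| = \log_2 |\B{n}{n-3}| \le n + O(\log n)$. This does \emph{not} follow from the contraction--deletion recursion, which only gives $\log_2 |\Bstar{n}{3}| \le \sum_{m=3}^{n-1} \log_2 |\Bstar{m}{2}| = \sum_{m=3}^{n-1} \log_2(2m) = O(n \log n)$; instead I would invoke the closed formula $|\B{n}{n-3}| = 2^n + n2^{n-2} - 2n$ of Ziegler \cite{ziegler93} (reproved later in the paper), from which $\log_2 |\B{n}{n-3}| = (n-2) + \log_2\bigl(n + 4 - o(1)\bigr) = n + O(\log n)$.

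For the lower bound, I would work with $|\Bstar{n}{k}| = |\Cstar{n}{k-1}|$ and repeat the lower-bound argument of \cref{theorem:higher-bruhat-order-asymptotics} with dual consistent posets, which is sanctioned by the remark following \cref{definition:I-consistent-poset}. Taking $\varnothing \in \Cstar{n}{k-2}$, the dual form of \cref{lemma:upper-order-ideal-is-consistent} shows every order ideal of $\P^*_\varnothing$ lies in $\Cstar{n}{k-1}$, so $|\Cstar{n}{k-1}| \ge 2^{\width(\P^*_\varnothing)}$; and the same computation as in \cref{theorem:higher-bruhat-order-asymptotics} identifies $\P^*_\varnothing$ with the Gale order on $\binom{[n]}{k-1}$. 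Each level set $\{X \in \binom{[n]}{k-1} : \sum_i x_i = c\}$ is then an antichain of $\P^*_\varnothing$, is in bijection with the partitions fitting in a $(k-1) \times (n-k+1)$ box, and so these antichains are counted by the coefficients of $\binom{n}{k-1}_q$; applying the Stanley--Zanello asymptotic \eqref{equation:stanley-zanello} with $a = n-k+1$ and $\alpha = \lfloor (k-1)/2 \rfloor$ gives $\width(\P^*_\varnothing) \ge \frac{A(k-2, \lfloor (k-3)/2 \rfloor) n^{k-2}}{(k-2)!(k-1)!} + O(n^{k-3})$, which is the claimed lower bound.

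The only step that is more than a reindexing of the proof of \cref{theorem:higher-bruhat-order-asymptotics} is the base case $k = 3$ of the upper bound: there the contraction--deletion product bound loses a factor of $\log n$ in the exponent, so one genuinely needs the precise count of $\B{n}{n-3}$ rather than a recursive estimate. Every other step is a term-by-term translation through the duality isomorphisms, with the dual consistent poset $\P^*_\varnothing$ and the $q$-binomial $\binom{n}{k-1}_q$ taking the places of $\P_\varnothing$ and $\binom{n}{k+1}_q$.
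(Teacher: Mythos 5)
Your proposal is correct and follows essentially the same path as the paper's proof: the base case $k=3$ from the closed formula $|\B{n}{n-3}| = 2^n + n2^{n-2} - 2n$, the inductive upper bound via the deletion--contraction injection and Faulhaber summation, and the lower bound by bounding $\width(\P^*_\varnothing)$ under the Gale order using the Stanley--Zanello asymptotic for coefficients of $\binom{n}{k-1}_q$. The only cosmetic difference is that you apply \cref{theorem:contraction-deletion-equation} to the primal $\C{n}{n-k+1}$ and reindex through \cref{theorem:fundamental-duality}, whereas the paper applies the dual form of that lemma directly to $\Cstar{n}{k-1}$; these are interchangeable by \cref{lemma:contraction-deletion-are-dual}, and your explicit remark that the recursion alone loses a $\log n$ factor at $k=3$ is a worthwhile clarification that the paper leaves implicit.
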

\begin{proof}
    When $k = 3$, the exact formula $|\B{n}{n-3}| = 2^n + n2^{n-2} - 2n$ is known (see \cref{theorem:codimension3-formula}). By \cref{theorem:fundamental-duality}, $|\Bstar{n}{3}| = |\B{n}{n-3}|$ and taking logarithms yields $\log_2 |\Bstar{n}{3}| = n + O(\log n)$. For $k = 3$, the coefficient on the lower bound is $\frac{A(1,0)}{1!2!} = \frac{1}{2}$ and the coefficient on the upper bound is $1$. Thus, the lower and upper bounds hold for $k = 3$. The upper bound is proved by induction on $k > 3$. Repeatedly applying the dual version of \cref{theorem:contraction-deletion-equation} gives an upper bound of
    \begin{equation}
        \label{equation:dual-upper-bound}
        |\Cstar{n}{k-1}| \le \prod_{m=k-1}^{n-1} |\Cstar{m}{k-2}|.
    \end{equation}
    By \cref{theorem:fundamental-duality}, $|\Cstar{n}{k-1}| = |\Bstar{n}{k}|$ and $|\Cstar{m}{k-2}| = |\Bstar{m}{k-1}|$. Substituting into \cref{equation:dual-upper-bound} and taking logarithms yields
    \begin{align*}
        \log_2 |\Bstar{n}{k}| &\le \sum_{m=k-1}^{n-1} \log_2 |\Bstar{m}{k-1}|\\
        &\le \sum_{m=k-1}^{n-1} \left(\frac{1}{(k-3)!}m^{k-3} + O(m^{k-4} \log m)\right)\\
        &\le \frac{1}{(k-2)!}n^{k-2} + O(n^{k-3} \log n).
    \end{align*}

    Next, \cref{lemma:upper-order-ideal-is-consistent} bounds $|\Cstar{n}{k-1}| = |\Bstar{n}{k}|$ from below by the number of upper order ideals of the dual consistent poset $\P^*_\varnothing$. The partial order on $\P^*_\varnothing$ is also equal to the Gale order, so exactly the same reasoning as in the proof of \cref{theorem:higher-bruhat-order-asymptotics} applies. Therefore, $\frac{A(k-2, \lfloor (k-3)/2\rfloor)}{(k-2)!(k-1)!}n^{k-2} + O(n^{k-3}) \le |\Cstar{n}{k-1}|$.
\end{proof}

\section{Explicit Enumeration}
\label{section:explicit-formulas}
This section supplies a proof of the explicit formula for $|\B{n}{n-3}|$ that was first put forth in \cite{ziegler93}. The proof techniques generalize to enumerate subsets of $\B{n}{n-4}$. In particular, \cref{theorem:TSPP-bijection} is proved.

For an element $I \in \Cstar{n-1}{k-1}$, let $\CstarI{I}{n}{k} = \{J \in \Cstar{n}{k} : J\contract n = I\}$. The sets $\CstarI{I}{n}{k}$ as $I$ ranges over all elements $\Cstar{n-1}{k-1}$ partition the elements of $\Cstar{n}{k}$ by their contraction. It follows that 
\begin{equation}
\label{equation:partition-by-contraction}
|\Cstar{n}{k}| = \sum_{I \in \Cstar{n-1}{k-1}} |\CstarI{I}{n}{k}|.
\end{equation}

By \cref{theorem:fundamental-duality}, $|\B{n}{n-3}| = |\Cstar{n}{2}|$ so to enumerate $\B{n}{n-3}$, it suffices to enumerate $\Cstar{n}{2}$. The contraction of an element in $\Cstar{n}{2}$ is an element in $\Cstar{n-1}{1}$ and one can check that the elements in $\Cstar{n-1}{1}$ are either of the form $\{\{1\}, \ldots, \{r\}\}$ or $\{\{r+1\}, \ldots, \{n-1\}\}$ for some integer $r$ such that $0 \le r \le n-1$. If $r = 0$, then by convention $\{\{1\}, \ldots, \{r\}\} = \varnothing$ and if $r = n-1$, then by convention $\{\{r+1\}, \ldots \{n-1\}\} = \varnothing$. For example, recall that the elements of $\Cstar{4}{1}$ are depicted on the right hand side of \cref{figure:dual-higher-bruhat-orders}.

A subset $I \subseteq \binom{[n]}{2}$ can be visualized as a set of squares in the plane. The square in $\N^2$ with bottom left vertex at $(x_1, x_2)$ and top right vertex at $(x_1+1, x_2+1)$ is associated to each $[x_1,x_2] \in I$. If $I$ is a coconsistent subset in $\Cstar{n}{2}$, then the contraction $I/n$ can be determined from the $x$-coordinates of the squares in the top row of the visualization.

\begin{example}
\label{example:cn2_visualization}
The coconsistent set $\binom{[6]}{2} \in \Cstar{6}{2}$ is depicted by the squares in \cref{figure:example-diagram} and $I = \{14, 15, 16, 23, 24, 25, 26, 34, 35, 36\} \in \Cstar{6}{2}$ is depicted by the shaded squares. The contraction $I/6$ is the element $\{\{1\}, \{2\}, \{3\}\} \in \C{5}{1}$ so $I \in \CstarI{\{\{1\}, \{2\}, \{3\}\}}{6}{2}$.

\begin{figure}[ht]
    \centering
    \begin{tikzpicture}[scale=0.5]
% Draw the axes.
\draw (0,7) -- (0,0) -- (7,0);
\foreach \y in {1,2,...,7} {
    \node[anchor=east] at (0, \y) {$\y$};
    \draw (0, \y) -- (0.1, \y);
}
\foreach \x in {1,2,...,6} {
    \draw (\x, 0) -- (\x, 0.1);
    \node[anchor=north] at (\x, 0) {$\x$};
}

% Draw the tableau.
% Shading
\fill[mygray] (1,7)  -- (4,7) -- (4,4) -- (3,4) -- (3,3) -- (2,3) -- (2,4) -- (1,4) -- cycle;
% \fill[mygray] (4.5, 5.5) circle (3pt);
% \fill[mygray] (4.5, 6.5) circle (3pt);
% \fill[mygray] (5.5, 6.5) circle (3pt);
% \fill[mygray] (1.5, 3.5) circle (3pt);\fill[mygray] (1.5, 2.5) circle (3pt);

% Borders
\foreach \i in {2,3,...,6} {
    \draw (1, \i) -- (\i, \i) -- (\i, 7);
}
\draw (1, 2) -- (1, 7) -- (6, 7);

% Packets
% \draw[myred, very thick] (1.5, 6.5+0.2) -- (5.5, 6.5+0.2);
% \draw[myred, very thick] (1.5+0.2, 3.5) -- (2.5, 3.5) -- (3.5,4.5) -- (3.5,6.5-0.2);
% \draw[myred, very thick] (1.5-0.2, 2.5) -- (1.5-0.2, 6.5);

\end{tikzpicture}
    \caption{The shaded squares represent the element $I \in \Cstar{6}{2}$ of \cref{example:cn2_visualization}.}
    \label{figure:example-diagram}
\end{figure}
\end{example}

\begin{lemma}
\label{lemma:cn2-refinement-emptyset}
For an integer $n \ge 2$, let $I(n-1) = \{\{1\}, \ldots, \{n-1\}\}$. Then the following formula holds
\begin{equation}
    \label{equation:cn2-refinement-emptyset}
    |\CstarI{\varnothing}{n}{2}| = |\CstarI{I(n-1)}{n}{2}| = 2^{n-2}.
\end{equation}
\end{lemma}
\begin{proof}
    One can check that the map sending $I \mapsto \binom{[n]}{2} \setminus I$ is a bijection between $\CstarI{\varnothing}{n}{2}$ and $\CstarI{I(n-1)}{n}{2}$. Thus, it suffices to prove that \cref{equation:cn2-refinement-emptyset} holds for $|\CstarI{\varnothing}{n}{2}|$. 

    Let $J \in \CstarI{\varnothing}{n}{2}$. The contraction $J/n$ is empty so for any $\{x\} \in \binom{[n-1]}{1}$, $[x,n] \not\in J$. Since the intersection $P^*(\{x\}) \cap J$ does not contain the maximal element $[x,n]$ of $P^*(\{x\})$ and $J$ is coconsistent, the intersection $P^*(\{x\}) \cap J$ is a prefix of $P^*(\{x\})$ in lex order. Thus coconsistency of $J$ imposes two types of convexity conditions:
    \begin{enumerate}
        \item If $[x,y] \in J$ and $1 < x$, then $[x-1, y] \in J$. 
        \item If $[x,y] \in J$ and $x < y-1$, then $[x, y-1] \in J$. 
    \end{enumerate}
    Therefore every coconsistent set $J \in \CstarI{\varnothing}{n}{2}$ is determined by the northeast border of the associated squares. For example, the shaded squares in \cref{figure:c_s_i_empty} depict an element in $\CstarI{\varnothing}{8}{2}$ and the corresponding northeast border.
    
    Mapping an element $J \in \CstarI{\varnothing}{n}{2}$ to the northeast border of the squares associated to $J$ gives a bijection between $\CstarI{\varnothing}{n}{2}$ and lattice paths of length $n-2$ that start from $(1,n)$ and consist only of south and east steps of unit length. There are $2^{n-2}$ such lattice paths, so $|\CstarI{\varnothing}{n}{2}| = 2^{n-2}$.
\end{proof}

\begin{figure}[!ht]
    \centering
    \begin{tikzpicture}[scale=0.5]
% Draw the axes.
\draw (0,9) -- (0,0) -- (9,0);
\foreach \y in {1,2,...,9} {
    \node[anchor=east] at (0, \y) {$\y$};
    \draw (0, \y) -- (0.1, \y);
}
\foreach \x in {1,2,...,8} {
    \draw (\x, 0) -- (\x, 0.1);
    \node[anchor=north] at (\x, 0) {$\x$};
}

% Draw the tableau.
% Shading
\fill[mygray] (1,8) -- (3,8) -- (3,7) -- (4,7) -- (4,4) -- (3,4) -- (3,3) -- (2,3) -- (2,2) -- (1,2) -- cycle;
% \foreach \i in {1,2,...,7} {
%     \fill[mygray] (\i+.5, 8.5) circle (3pt);
% }
% \foreach \i in {3,4,5,6} {
%     \fill[mygray] (\i+.5, 7.5) circle (3pt);
% }
% \fill[mygray] (4.5, 6.5) circle (3pt);
% \fill[mygray] (5.5, 6.5) circle (3pt);
% \fill[mygray] (4.5, 5.5) circle (3pt);

% Borders
\foreach \i in {2,3,...,8} {
    \draw (1, \i) -- (\i, \i) -- (\i, 9);
}
\draw (1, 2) -- (1, 9) -- (8, 9);

% Path
\draw[line width=3pt] (1,8) -- (3,8) -- (3,7) -- (4,7) -- (4,5);
\fill (1,8) circle (6pt);
\fill (4,5) circle (6pt);

\end{tikzpicture}\quad
    \caption{The northeast border of an element in $\CstarI{\varnothing}{8}{2}$.}
    \label{figure:c_s_i_empty}
\end{figure}

\begin{lemma}
    \label{lemma:refinement}
    Let $r,n$ be integers such that $n \ge 2$ and $1 \le r \le n-2$. Let $I(r) = \{\{1\}, \ldots, \{r\}\}$. The following formula holds
    \begin{equation}
        \label{equation:Cn2_refinement}
        |\CstarI{I(r)}{n}{2}| = |\CstarI{I(n-1) \setminus I(r)}{n}{2}| = 2^{n-3} + \binom{n-1}{r} - 1.
    \end{equation}
\end{lemma}
\begin{proof}
    One can check that the map sending $I \mapsto \binom{[n]}{2} \setminus I$ is a bijection between $\CstarI{I(r)}{n}{2}$ and $\CstarI{I(n-1) \setminus I(r)}{n}{2}$. Thus, it suffices to prove that \cref{equation:Cn2_refinement} holds for $|\CstarI{I(r)}{n}{2}|$. The elements in $\CstarI{I(r)}{n}{2}$ can be partitioned into those $J \in \CstarI{I(r)}{n}{2}$ where $[r,r+1] \in J$ and those where $[r,r+1] \not\in J$. The number of elements in each case is computed respectively.

    \textbf{Case 1: $\boldsymbol{[r,r+1] \in J}$.} Since $J/n = \{\{1\}, \ldots, \{r\}\}$, the element $[r,n]$ is in $J$. Because both $[r,r+1], [r,n] \in J$, the coconsistency of $J$ implies that $[r,j] \in J$ for all $j$ satisfying $r+1 \le j \le n$. Furthermore, $[j, n] \not\in J$ for $r+1 \le j < n$, so the intersection $P^*(\{j\}) \cap J$ does not contain the lex maximal element of $P^*(\{j\})$. Thus, $P^*(\{ j \}) \cap J$ is a prefix of the copacket $P^*(\{j\})$ in lex order. It follows that $[i,j] \in J$ for all $1 \le i \le r$ and $r+1 \le j \le n$. Therefore, to enumerate the elements $J \in \CstarI{I(r)}{n}{2}$ that satisfy $[r,r+1] \in J$ amounts to enumerating the possible intersections 
    \begin{align}
        \label{equation:case2a-top-right}
        J &\cap \{[i,j] : r+1 \le i < j \le n\},\\
        \label{equation:case2a-bottom-left}
        J &\cap \{[i,j] : 1 \le i < j \le r\}.
    \end{align}
    The intersections in \cref{equation:case2a-top-right} and \cref{equation:case2a-bottom-left} are outlined in bold in the left diagram of \cref{figure:b_n_i}.
    
    Observe that the possible intersections in \cref{equation:case2a-top-right} are constrained by the coconsistency of $J$ with respect to intersections $J \cap P^*(\{i\})$ where $r+1 \le i \le n$. Similarly, the possible intersections in \cref{equation:case2a-bottom-left} are constrained by the coconsistency of $J$ with respect to intersections $J \cap P^*(\{i\})$ for $1 \le i \le r$. Thus, the intersections in \cref{equation:case2a-top-right} and \cref{equation:case2a-bottom-left} can be chosen independently of each other. Through considering \cref{figure:b_n_i} or the definition of coconsistency, one can check that the possible intersections in \cref{equation:case2a-top-right} are enumerated by the elements in $\CstarI{\varnothing}{n-r}{2}$. By \cref{lemma:cn2-refinement-emptyset}, $|\CstarI{\varnothing}{n-r}{2}| = 2^{n-r-2}$. Similarly, the possible intersections in \cref{equation:case2a-bottom-left} are enumerated by elements in $\CstarI{I(r)}{r+1}{2}$ and by \cref{lemma:cn2-refinement-emptyset}, $|\CstarI{I(r)}{r+1}{2}| = 2^{r-1}$. Therefore, there are $2^{n-r-2} \cdot 2^{r-1} = 2^{n-3}$ elements enumerated in this case.

    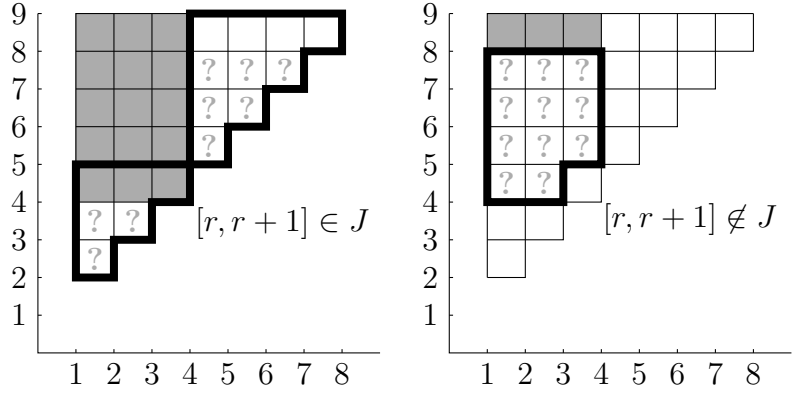
\begin{figure}[!ht]
        \centering
        \begin{tikzpicture}[scale=0.5]
% Draw the axes.
\draw (0,9) -- (0,0) -- (9,0);
\foreach \y in {1,2,...,9} {
    \node[anchor=east] at (0, \y) {$\y$};
    \draw (0, \y) -- (0.1, \y);
}
\foreach \x in {1,2,...,8} {
    \draw (\x, 0) -- (\x, 0.1);
    \node[anchor=north] at (\x, 0) {$\x$};
}

% Draw the tableau.
% Shading
\fill[mygray] (1,9) -- (4,9) -- (4,4) -- (1,4) -- cycle;
% \foreach \i in {4,5,...,7} {
%     \fill[mygray] (\i+.5, 8.5) circle (3pt);
% }
\foreach \x in {4,5,6} {
    \node[mygray] at (\x+.5, 7.5) {\textbf{?}};
}
\node[mygray] at (4.5, 6.5) {\textbf{?}};
\node[mygray] at (5.5, 6.5) {\textbf{?}};
\node[mygray] at (4.5, 5.5) {\textbf{?}};
\node[mygray] at (1.5, 3.5) {\textbf{?}};
\node[mygray] at (2.5, 3.5) {\textbf{?}};
\node[mygray] at (1.5, 2.5) {\textbf{?}};

% Borders
\foreach \i in {2,3,...,8} {
    \draw (1, \i) -- (\i, \i) -- (\i, 9);
}
\draw (1, 2) -- (1, 9) -- (8, 9);

% Path
\draw[line width=3pt] (4,9) -- (8,9) -- (8,8) -- (7,8) -- (7,7) -- (6,7) -- (6,6) -- (5,6) -- (5,5) -- (4,5) -- cycle;
\draw[line width=3pt] (1,5) -- (4,5) -- (4,4) -- (3,4) -- (3,3) -- (2,3) -- (2,2) -- (1,2) -- cycle;

\node[anchor=north west] at (3.85, 4.15) { $[r,r+1] \in J$};

\end{tikzpicture}\quad
        \begin{tikzpicture}[scale=0.5]
% Draw the axes.
\draw (0,9) -- (0,0) -- (9,0);
\foreach \y in {1,2,...,9} {
    \node[anchor=east] at (0, \y) {$\y$};
    \draw (0, \y) -- (0.1, \y);
}
\foreach \x in {1,2,...,8} {
    \draw (\x, 0) -- (\x, 0.1);
    \node[anchor=north] at (\x, 0) {$\x$};
}

% Draw the tableau.
% Shading
\fill[mygray] (1,9) -- (4,9) -- (4,8) -- (1,8) -- cycle;
% \foreach \i in {4,5,...,7} {
%     \fill[mygray] (\i+.5, 8.5) circle (3pt);
% }
% \foreach \i in {4,5,6} {
%     \fill[mygray] (\i+.5, 7.5) circle (3pt);
% }
% \fill[mygray] (4.5, 6.5) circle (3pt);
% \fill[mygray] (5.5, 6.5) circle (3pt);
% \fill[mygray] (4.5, 5.5) circle (3pt);
% \fill[mygray] (3.5, 4.5) circle (3pt);
% \fill[mygray] (1.5, 3.5) circle (3pt);
% \fill[mygray] (2.5, 3.5) circle (3pt);
% \fill[mygray] (1.5, 2.5) circle (3pt);
\foreach \x in {1,2,3} {
    \foreach \y in {5,6,7} {
        \node[mygray] at (\x+.5, \y+.5) {\textbf{?}};
    }
}
\node[mygray] at (1.5, 4.5) {\textbf{?}};
\node[mygray] at (2.5, 4.5) {\textbf{?}};

\node[anchor=north west] at (3.75, 4.25) {$[r,r+1] \not\in J$};

% Borders
\foreach \i in {2,3,...,8} {
    \draw (1, \i) -- (\i, \i) -- (\i, 9);
}
\draw (1, 2) -- (1, 9) -- (8, 9);

% Path
\draw[line width=3pt] (1,8) -- (4,8) -- (4,5) -- (3,5) -- (3,4) -- (1,4) -- cycle;

\end{tikzpicture}
        \caption{Case 1 (left) and case 2 (right) in the proof of \cref{lemma:refinement}.}
        \label{figure:b_n_i}
    \end{figure}
    
    \textbf{Case 2: $\boldsymbol{[r,r+1] \not\in J}$.}
    Since $J/n = \{\{1\}, \ldots, \{r\}\}$, $[r+1,n] \not\in J$. Since also $[r,r+1] \not\in J$ by hypothesis, $P^*(\{r+1\}) \cap J$ is a prefix of $P^*(\{r+1\})$ in lex order, so 
    \begin{equation}
        J \cap \{[r+1, i] : r+1 < i \le n\} = \varnothing.
    \end{equation}
    Since $[r+1,i] \not\in J$ for all $i$ satisfying $r+1 < i \le n$, the coconsistency of $J$ implies that $P^*(\{i\}) \cap J$ is a prefix of $P^*(\{i\})$ in lex order. Therefore, $[i,j] \not\in J$ for all $j$ such that $i < j \le n$ and hence
    \begin{equation}
        \label{equation:case2-first-empty}
        J \cap \{[i,j] : r+1 \le i < j \le n\} = \varnothing.
    \end{equation}
    
    Next, $[r,n] \in J$ but $[r,r+1] \not\in J$, so the coconsistency of $J$ implies that $P^*(\{r\}) \cap J$ is a suffix of $P^*(\{r\})$ in lex order and furthermore,
    \begin{equation}
        J \cap \{[j,r] : 1 \le j < r\} = \varnothing.
    \end{equation}
    Therefore, for all $j$ such that $1 \le j < r$, $[j,r] \not\in J$ and $J \in \CstarI{I(r)}{n}{2}$, so we know that $[j,n] \in J$ and  $P^*(\{j\}) \cap J$ is a suffix of $P^*(\{j\})$ in lex order. Thus, $[i,j] \not \in J$ for all $i$ such that $1 \le i < j$, and hence
    \begin{equation}
        \label{equation:case2-second-empty}
        J \cap \{[i,j] : 1 \le i < j \le r\} = \varnothing.
    \end{equation}    
    Both intersections in \cref{equation:case2-first-empty} and \cref{equation:case2-second-empty} are empty. Therefore, to enumerate the sets $J \in \CstarI{I(r)}{n}{2}$ that satisfy $[r,r+1] \not\in J$ amounts to enumerating the possible intersections
    \begin{equation}
        \label{equation:case2b}
        J \cap \{[i,j] : \text{$1 \le i \le r$ and $r+1 \le j < n$}\}.
    \end{equation}
    The intersection in \cref{equation:case2b} is outlined in bold in the right diagram of \cref{figure:b_n_i}.
    
    Let $i,j$ be integers satisfying $1 \le i \le r$ and $r+1 \le j < n$. Observe that the coconsistency of $J$ imposes two types of convexity conditions:
    \begin{enumerate}
        \item If $i > 1$ and $[i,j] \in J$, then $[i-1,j] \in J$.
        \item  If $j < n$ and $[i,j] \in J$, then $[i,j+1] \in J$.
    \end{enumerate}
    Therefore, the sets $J \in \CstarI{I(r)}{n}{2}$ satisfying $[r,r+1] \not\in J$ are in bijection with the  set of lattice paths from $[1,r+1]$ to $[r+1,n]$ consisting of east and north steps of unit length, with the exception of the path consisting of all east steps followed by all north steps. A set $J$ bijects to such a lattice path by mapping $J$ to the southeast border of the squares associated with $J$. There are thus $\binom{n-1}{r}-1$ elements enumerated in this case. Summing the enumeration in cases 1 and 2 together yields $|\CstarI{I(r)}{n}{2}| = 2^{n-3} + \binom{n-1}{r} - 1$.
\end{proof}

\begin{theorem}[{\cite[Proposition 7.1]{ziegler93}}]
    \label{theorem:codimension3-formula}
    For integers $n \ge 4$, $|\B{n}{n-3}| = 2^n + n2^{n-2} - 2n$.
\end{theorem}
\begin{proof}
    By \cref{theorem:fundamental-duality}, $|\B{n}{n-3}| = |\Bstar{n}{3}| = |\Cstar{n}{2}|$. Partitioning coconsistent sets by their contraction implies that 
    \begin{align*}
        |\Cstar{n}{2}| &= \sum_{I \in \Cstar{n-1}{1}} |\CstarI{I}{n}{2}|\\
        &= |\CstarI{\varnothing}{n}{2}| + |\CstarI{I(n-1)}{n}{2}| + \sum_{r=1}^{n-2} \left(|\CstarI{I(r)}{n}{2}| + |\CstarI{I(n-1) \setminus I(r)}{n}{2}|\right)\\
        &= 2|\CstarI{\varnothing}{n}{2}| + 2\sum_{r=1}^{n-2} |\CstarI{I(r)}{n}{2}|,
    \end{align*}
    where the last equality follows by \cref{lemma:cn2-refinement-emptyset} and \cref{lemma:refinement}. By using the explicit formulas \cref{equation:cn2-refinement-emptyset} and \cref{equation:Cn2_refinement}, the summation on the right hand side can be computed as follows
    \begin{align*}
        |\CstarI{\varnothing}{n}{2}| + \sum_{r=1}^{n-2} |\CstarI{I(r)}{n}{2}|
        &= 2^{n-2} + \sum_{r=1}^{n-2} \left(2^{n-3} + \binom{n-1}{r} - 1\right)\\
        &= 2^{n-2} + (n-2)2^{n-3} + (2^{n-1}-2) - (n-2)\\
        &= 2^{n-1} + n2^{n-3} - n.
    \end{align*}
    Multiplying by 2 yields the desired formula.
\end{proof}

\begin{proof}[Proof of \cref{theorem:TSPP-bijection}]
By \cref{theorem:fundamental-duality} and     \cref{lemma:contraction-deletion-are-dual}, the subposet of $\B{n}{n-4}$ obtained by restricting to elements whose deletion is the commutation class of the lexicographic order is isomorphic to the subposet of $\Cstar{n}{3}$ obtained by restricting to $\CstarI{\varnothing}{n}{3}$. By Theorem 4.5 in \cite{ziegler93}, single step inclusion and inclusion coincide for $\C{n}{n-3}$ and hence for $\Cstar{n}{3}$ and $\CstarI{\varnothing}{n}{3}$.

Consider the map $\varphi: \T_{n-3} \to \CstarI{\varnothing}{n}{3}$ defined by
\begin{equation}
\varphi(T) = \{[x_1,x_2+1,x_3+2] : [x_1,x_2,x_3] \in T\}.
\end{equation}
To see that the image of $\varphi$ is $\CstarI{\varnothing}{n}{3}$, let $T \in \T_{n-3}$. For any $[x_1,x_2,x_3] \in T$, the inequalities $1 \le x_1 < x_2 < x_3 \le n-3$ holds. In particular, $x_3 +2 < n$. Therefore, $\varphi(T) / n = \varnothing$. Since $T$ is a plane partition, for any $1 \le i < j \le n$, the intersection $\varphi(T) \cap P^*([i,j])$ is a prefix of the copacket $P^*([i,j])$ in lex order. Thus, $\varphi(T)$ is coconsistent. The inverse map $\varphi^{-1}: \CstarI{\varnothing}{n}{3} \to \T_{n-3}$ is defined to be
\begin{equation}
    \varphi^{-1}(I) = \{[x_{\pi(1)}, x_{\pi(2)}, x_{\pi(3)}] : \text{$[x_1,x_2,x_3] \in I$ and $\pi \in \S_3$}\}.
\end{equation}
It is clear that for any $T, T' \in \T_{n-3}$, the inclusion $T \subseteq T'$ holds if and only if the inclusion $\varphi(T) \subseteq \varphi(T')$ holds. Thus, $\varphi$ is an isomorphism between $\T_{n-3}$ and $\CstarI{\varnothing}{n}{3}$.
\end{proof}

\section{Future Work}
We conclude with some directions for future study. Consistent posets were introduced to obtain a lower bound in \cref{theorem:higher-bruhat-order-asymptotics-intro} but are an interesting family of posets to study in their own right. We have verified the following conjecture on consistent posets $\P_S$ for all $S \in \binom{[n]}{k-1}$ where $n \le 7$ and $2 \le k \le n$.

\begin{conjecture}
    For integers $1 \le k \le n$ and $S \in \C{n}{k}$, the consistent poset $\P_S$ is ranked. 
\end{conjecture}

The weaving functions introduced in \cref{section:weaving-functions} are also interesting objects of study. It is surprising that commutation classes in $\B{n}{k}$ can be recovered from a weaving function when much of the information in a commutation class appears to be ``forgotten'' by considering only binary words.

\begin{problem}
    For integers $1 \le k \le n$, find a combinatorial criterion to determine which functions $W: \binom{[n]}{k-1} \to \{0,1\}^*$ are the weaving function of some $[\rho] \in \B{n}{k}$.
\end{problem}

Extensive computational data has been generated in the case $k = 2$ and is available as part of the \href{https://github.com/pnnl/ML4AlgComb/tree/master}{Algebraic Combinatorics Dataset Repository}. In particular, a transformer machine learning model trained on a subset of the weaving functions of $\B{7}{2}$ attains $99.1\%$ accuracy in identifying whether or not an arbitrary map $[n] \to \{0,1\}^*$ is a weaving function $W_\rho$ for some $[\rho] \in \B{7}{2}$. The parameter size of the trained model is significantly smaller than $|\B{7}{2}|$, suggesting that there is some simple learned embedding or heuristic in the model rather than memorization of the training data.

\cref{theorem:TSPP-bijection} demonstrates that an explicit formula for $|\CstarI{\varnothing}{n}{3}|$ exists and is given by Stembridge's product formula for TSPPs. If one were able to obtain an explicit formula for $|\CstarI{I}{n}{3}|$ for general coconsistent sets $I$, then by \cref{equation:partition-by-contraction}, one may obtain a formula for $\Cstar{n}{3}$.

\begin{problem}
    For a coconsistent set $I \in \Cstar{n-1}{2}$, find a formula for $|\CstarI{I}{n}{3}|$ analogous to the formula in \cref{equation:Cn2_refinement}.
\end{problem}

\subsection*{Acknowledgements}
Many thanks to Sara Billey and Ben Elias for helpful discussions and introducing me to the higher Bruhat orders. Thanks to Kevin Liu for discussing weaving functions with me. Additional thanks to Sean Grate, Helen Jenne, Jamie Kimble, Henry Kvinge, Cordelia Li, Clare Minnerath, Michael Tang and Rachel Wei for feedback.

\newpage
\emergencystretch=1em
\printbibliography

\end{document}